%% file: main.tex
\documentclass[a4paper, reqno, 11pt]{amsart}
\usepackage[utf8]{inputenc}

\usepackage{amsthm}
\usepackage{amssymb}
\usepackage{exscale}
\usepackage[mathscr]{euscript}
\usepackage{url}
\usepackage[all,ps,tips,tpic]{xy}
\usepackage{graphicx}
\usepackage{color}

\usepackage{booktabs}
\usepackage{tabularx}

\newenvironment{altenumerate}
   {\begin{list}
      {\textup{(\theenumi)} }
      {\usecounter{enumi}
       \setlength{\labelwidth}{0pt}
       \setlength{\labelsep}{2pt}
       \setlength{\leftmargin}{0pt}
       \setlength{\itemsep}{\the\smallskipamount}
       \renewcommand{\theenumi}{\roman{enumi}}
      }}
   {\end{list}}

\newenvironment{altenumeratealpha}
   {\begin{list}
      {\textup{(\theenumi)} }
      {\usecounter{enumi}
       \setlength{\labelwidth}{0pt}
       \setlength{\labelsep}{2pt}
       \setlength{\leftmargin}{0pt}
       \setlength{\itemsep}{\the\smallskipamount}
       \renewcommand{\theenumi}{\alph{enumi}}
      }}
   {\end{list}}

\newenvironment{altitemize}
   {\begin{list}
      {$\bullet$ }
      {\setlength{\labelwidth}{0pt}
       \setlength{\labelsep}{2pt}
       \setlength{\leftmargin}{0pt}
       \setlength{\itemsep}{\the\smallskipamount}
      }}
   {\end{list}}

\newtheorem{lem}{Lemma}[section]
\newtheorem{definition}[lem]{Definition}

\theoremstyle{remark}

\newtheorem{example}[lem]{Example}

\theoremstyle{plain}
\newtheorem{theorem}[lem]{Theorem}
\newtheorem{lemma}[lem]{Lemma}
\newtheorem{proposition}[lem]{Proposition}
\newtheorem{corollary}[lem]{Corollary}

\theoremstyle{remark}
\newtheorem{remark}[lem]{Remark}

\newcommand{\Z}{\mathbb{Z}}
\newcommand{\Q}{\mathbb{Q}}

\newcommand{\Zp}{\Z_p}
\newcommand{\Ztwo}{\Z_2}
\newcommand{\Qtwo}{\Q_2}

\newdir{ >}{{}*!/-5pt/@{>}}
\SelectTips{cm}{10}

\newenvironment{psmallmatrix}{\left(\begin{smallmatrix}}{\end{smallmatrix}\right)}

\begin{document}

\title[Elementary local densities via lifting]{Elementary local representation densities at all primes via lifting recursions}

\author{Samuel Griffiths}
\address{Auckland University of Technology}
\email{xtb2822@autuni.ac.nz}
\date{}

\subjclass[2020]{11E12, 11E20}
\keywords{local representation density, Siegel series, hyperbolic plane, dyadic local density, quadratic forms, lifting recursion, Hensel lifting}

\begin{abstract}
Let $p$ be a prime and let $L$ be a quadratic $\Zp$-lattice with quadratic form $Q$.
For $t\ne 0$ the local representation density $\alpha_p(t;L)$ is the stable normalised growth of the congruence counts of solutions to
$Q(v)\equiv t\pmod{p^m}$.
We compute these counts and densities explicitly for the hyperbolic plane $H_0$ over $\Zp$, uniformly in the prime~$p$, and at $p=2$ for the basic dyadic blocks of Definition~\ref{def:dyadic-base-blocks} (rank-$1$ Type~I blocks and the even binary planes $2^aH_\varepsilon$), together with the anisotropic ternary lattice $L_3=\langle 2\rangle^{\oplus 3}$.

At the dyadic prime the usual Jacobian/Hensel lifting mechanism breaks down in the bilinear-lattice convention $Q(v)=\langle v,v\rangle$.
The main new input is an explicit \emph{half-lift} involution for diagonal sums of squares, which yields a stable lifting recursion with factor $2^{d-1}$ under the primitivity hypothesis $4\nmid a$.
As applications we obtain closed forms for the three-squares congruence counts (hence $\alpha_2(t;L_3)$) and a prime-uniform formula for the densities of the scaled hyperbolic planes $p^eH_0$ in the standard normalisation $q=\langle\cdot,\cdot\rangle/2$.
\end{abstract}

\maketitle


\input{sections/01_introduction}
\input{sections/02_preliminaries_dyadic_jordan}
\input{sections/03_hyperbolic_plane_counts}
\input{sections/04_anisotropic_even_plane}
\input{sections/05_half_lift_and_L3}
\input{sections/06_explicit_density_formulas}

\appendix
\input{sections/A_typeI_rank1_blocks}

\bibliographystyle{abbrv}
\bibliography{references}

\end{document}

%% file: sections/01_introduction.tex
\section{Introduction}
\label{sec:intro}

Local representation densities of quadratic lattices are the local factors in many global counting problems.
They appear in the Siegel series and Siegel's mass formula, and they control Fourier coefficients of theta series and Eisenstein series.
In concrete arithmetic applications one is often forced to evaluate these local factors for specific lattices, and the dyadic prime is typically the main obstacle.

At odd primes, once the relevant solution locus is smooth modulo $p$, Hensel lifting implies that reduction maps between solution sets have constant fibre size $p^{n-1}$, and the density stabilises quickly.
At the dyadic prime this mechanism can break down even for very simple lattices.
In the bilinear-lattice convention $Q(v)=\langle v,v\rangle$, the linearisation of $Q-t$ at a solution $\bar v$ is the functional $h\mapsto 2\langle \bar v,h\rangle$, which vanishes modulo~$2$.
Thus the standard Jacobian criterion yields no lifting information.

Every nondegenerate quadratic lattice over $\Ztwo$ admits a Jordan decomposition into scaled unimodular constituents (Proposition~\ref{prop:jordan}).
Accordingly, explicit formulas for the basic dyadic blocks are a natural starting point for dyadic computations.
In this paper we determine the finite-level representation counts and local densities for the basic dyadic blocks of Definition~\ref{def:dyadic-base-blocks} (rank-$1$ Type~I blocks and the even planes $2^aH_\varepsilon$), together with the anisotropic ternary lattice $L_3$.

The main new input is a very simple involution argument, the \emph{half-lift}.
For diagonal sums of squares $Q_d=x_1^2+\cdots+x_d^2$ and a primitive target $a$ with $4\nmid a$, translation by $2^{n-1}u$ in a primitive direction produces a free involution on the solution set modulo $2^n$ whose orbits control lifts to level $2^{n+1}$.
In each orbit exactly one residue class lifts, and whenever a class lifts, all lifts in its fibre are solutions.
This yields the stable recursion $N_{d,n+1}(a)=2^{d-1}N_{d,n}(a)$ for $n\ge 3$ (Theorem~\ref{thm:half-lift-principle}).
For $d=3$ it leads to closed forms for the three-squares congruence counts and hence an explicit formula for $\alpha_2(t;L_3)$.

\smallskip
We fix the conventions for the finite-level representation counts $r_m(t;L)$ and the local density $\alpha_p(t;L)$ in \S\ref{sec:prelim}.
At $p=2$ and for even lattices there is a second common normalisation $q:=Q/2$; we work with $Q$ unless explicitly indicated, and Lemma~\ref{lem:q-dictionary} gives the precise conversion.

General formulas for the dyadic Siegel series are known, expressed for instance in terms of Gross--Keating invariants or group-scheme methods (see for example
\cite{grossKeating1993,choYamauchi2020,ganYu2000,ikedaKatsurada2018,kitaokaBook}).
The densities computed here agree with these formulas (cf.\ Yang's explicit expressions~\cite{yang1998}), but this serves only as an \emph{external consistency check} and is not used as an input to any argument.

At odd primes, the hyperbolic-plane computations of \S\ref{sec:hyperbolic} are classical; we include a self-contained counting argument to keep the exposition uniform and to make the prime-uniform statement of Theorem~\ref{thm:prime-uniform-h0} transparent.
The genuinely new feature of this paper is dyadic: the half-lift involution of Theorem~\ref{thm:half-lift-principle} and the resulting elementary derivation of the dyadic block densities without appealing to the full Siegel-series machinery.

All computations in the body of the paper are organised around the same elementary theme:
describe the fibres of the reduction maps $L/2^{m+1}L\to L/2^mL$ on the relevant solution sets.
Concretely,
\begin{altenumerate}
\item for the split even plane $H_0$ this reduces to valuation-stratified counting for the bilinear congruence $2xy\equiv t$ (see Lemmas~\ref{lem:2xy-odd} and~\ref{lem:2xy-2});
\item for the anisotropic plane $H_1$ it becomes a lifting recursion for a norm congruence in the unramified quadratic extension of~$\Qtwo$ (see Lemma~\ref{lem:H1});
\item for the anisotropic ternary block $L_3$ one needs an additional involution (the half-lift) because the naive dyadic linearisation carries no information (see Theorem~\ref{thm:half-lift-principle}).
\end{altenumerate}

This paper focuses on the basic dyadic blocks (Definition~\ref{def:dyadic-base-blocks}) and the anisotropic ternary lattice $L_3$.
Once the basic block counts are known, representation counts for orthogonal sums reduce to finite convolutions (Proposition~\ref{prop:orthogonal-convolution}).
We do not attempt closed forms for general orthogonal sums.

\smallskip
\noindent
For quick reference, Table~\ref{tab:block-densities} in \S\ref{sec:densities} lists the final density values.
Readers primarily interested in the dyadic novelty may start with \S\ref{sec:half-lift}.
\subsection{Main results}

Let $p$ be a prime.
Section~\ref{sec:hyperbolic} treats the hyperbolic plane $H_0$ over $\Zp$ for every $p$ and gives explicit formulas for the congruence counts $M_{p,m}(s)$ in~\eqref{eq:2xy}, together with their generating series and stable-range behaviour.
Section~\ref{sec:densities} records the resulting density formulas for $\alpha_p(t;H_0)$ and the basic dyadic blocks.
In particular, after passing to the standard normalisation $q=\langle\cdot,\cdot\rangle/2$ at $p=2$ (Lemma~\ref{lem:q-dictionary}), the density of $H_0$ and its scalings admits a single prime-uniform expression (Theorem~\ref{thm:prime-uniform-h0}).
The remaining sections specialise to $p=2$ and evaluate the basic dyadic blocks.

We therefore recall the dyadic Jordan decomposition (Proposition~\ref{prop:jordan}) and the basic dyadic blocks (Definition~\ref{def:dyadic-base-blocks}) that will be treated explicitly.
In addition to the rank-$2$ even blocks, the rank-$3$ lattice
\begin{equation}\label{eq:L3}
  L_3 := \langle 2\rangle^{\oplus 3}\qquad\text{with }Q(x)=2(x_1^2+x_2^2+x_3^2)
\end{equation}
plays a distinguished role: it is the simplest anisotropic even ternary lattice over $\Ztwo$, and its representation counts amount to the classical three-squares congruence problem.
Indeed, if $x_1^2+x_2^2+x_3^2=0$ in $\Ztwo$, then reducing modulo $8$ forces $x_1,x_2,x_3$ to be even, and iterating gives $x_1=x_2=x_3=0$; hence $L_3$ is anisotropic over $\Qtwo$.

Our first main theorem is a lifting principle for diagonal sums of squares in arbitrary dimension, under the mild primitivity hypothesis $4\nmid a$.

\begin{theorem}[Half-lift principle for sums of squares]\label{thm:half-lift-principle}
Let $d\ge 1$ and set
\[
  Q_d(x_1,\dots,x_d):=x_1^2+\cdots+x_d^2.
\]
For $n\ge 1$ and $a\in\Z$ let
\[
  N_{d,n}(a):=\#\{x\in (\Z/2^n\Z)^d: Q_d(x)\equiv a\pmod{2^n}\}.
\]
Assume $n\ge 3$ and $4\nmid a$.
Then
\[
  N_{d,n+1}(a)=2^{d-1}\,N_{d,n}(a).
\]
Equivalently, among the residue classes modulo $2^n$ satisfying $Q_d\equiv a$, exactly half admit lifts to level $2^{n+1}$; moreover, whenever a class lifts, \emph{all} $2^d$ lifts in its fibre are solutions (cf.\ Lemma~\ref{lem:fibre-invariance}).
\end{theorem}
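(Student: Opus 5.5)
The plan is to study the reduction map $\pi\colon S_{n+1}\to S_n$, where $S_n$ denotes the solution set of $Q_d\equiv a\pmod{2^n}$ in $(\Z/2^n\Z)^d$. The argument has three steps.

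\textbf{Step 1 (fibre invariance).} Fix $\bar x\in S_n$ and choose any representative $x\in\Z^d$. Every lift of $\bar x$ to level $2^{n+1}$ is of the form $x+2^n h$ with $h\in\{0,1\}^d$. We have
\[
Q_d(x+2^nh)=Q_d(x)+2^{n+1}\langle x,h\rangle+2^{2n}Q_d(h)\equiv Q_d(x)\pmod{2^{n+1}},
\]
since $2n\ge n+1$. Hence whether a lift is a solution does not depend on $h$: either all $2^d$ lifts of $\bar x$ lie in $S_{n+1}$, or none does. Moreover the condition is simply $Q_d(x)\equiv a\pmod{2^{n+1}}$, which is independent of the chosen representative by the same computation. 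So each $\bar x\in S_n$ carries a well-defined bit $\varepsilon(\bar x)\in\{0,1\}$, given by $Q_d(x)\equiv a+\varepsilon(\bar x)2^n\pmod{2^{n+1}}$.

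\textbf{Step 2 (the half-lift involution).} Since $4\nmid a$ and $n\ge3$, every solution $x$ is primitive, meaning some coordinate $x_i$ is odd. Otherwise $Q_d(x)\equiv0\pmod 4$, which forces $a\equiv0\pmod4$, a contradiction. Define $\sigma(\bar x)=\bar x+2^{n-1}e_{i(\bar x)}$, where $i(\bar x)$ is the smallest index with $x_i$ odd. Then
\[
Q_d(x+2^{n-1}e_i)=Q_d(x)+2^nx_i+2^{2n-2}.
\]
Because $n\ge3$ we have $2n-2\ge n+1$, so the last term vanishes modulo $2^{n+1}$. Since $x_i$ is odd, the middle term is $2^n$ modulo $2^{n+1}$. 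Therefore $\sigma$ maps $S_n$ to $S_n$ and flips $\varepsilon$. The shift changes coordinate $i$ by $2^{n-1}$, which is even because $n\ge2$, so all parities are preserved. In particular $i(\sigma\bar x)=i(\bar x)$, and so $\sigma^2=\mathrm{id}$. Also $\sigma$ has no fixed points, since $2^{n-1}\not\equiv0\pmod{2^n}$.

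\textbf{Step 3 (counting).} The orbits of $\sigma$ have size $2$, and in each orbit exactly one class has $\varepsilon=0$. Thus exactly half of $S_n$ lifts, and by Step 1 each lifting class contributes all $2^d$ of its lifts. This gives
\[
N_{d,n+1}(a)=2^d\cdot\tfrac12N_{d,n}(a)=2^{d-1}N_{d,n}(a).
\]

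The only delicate point is the well-definedness of $\sigma$ and $\varepsilon$ on residue classes. It is handled by the parity-preservation observation together with the two exponent inequalities $2n\ge n+1$ and $2n-2\ge n+1$; the second is exactly where the hypothesis $n\ge3$ is needed. Fibre invariance is the content of Lemma~\ref{lem:fibre-invariance}, but Step 1 reproves it directly.
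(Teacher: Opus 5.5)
Your proposal is correct and follows the paper's proof essentially step for step. The paper uses the same three ingredients: fibre invariance; the free involution translating the minimal-index odd coordinate by $2^{n-1}$, which the paper phrases as the partition into the sets $S_n^{(i)}(a)$ with $\tau_{e_i}$ acting on each piece; and the observation that this translation shifts $Q_d$ by exactly $2^n$ modulo $2^{n+1}$ once $n\ge 3$.
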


\begin{remark}[Scope of the half-lift involution]\label{rem:half-lift-scope}
The involution argument proving Theorem~\ref{thm:half-lift-principle} uses the specific diagonal form
$Q_d=x_1^2+\cdots+x_d^2$ and the primitivity hypothesis $4\nmid a$.
The primitivity is essential in general: for example, in dimension $d=4$ one checks directly that
\(N_{4,4}(8)=1536\) while \(N_{4,3}(8)=128\), so the ratio is $12\ne 2^{d-1}=8$.
On a conceptual level, two features of the proof break down once these hypotheses are relaxed:
if $4\mid a$ then solutions modulo $2^n$ may have all coordinates even, so the first step (which extracts an odd coordinate to choose an involution direction) can fail; and for $n=2$ the quadratic error term in the expansion \eqref{eq:expansion-d} no longer vanishes modulo $2^{n+1}$, so the translation $x\mapsto x+2^{n-1}u$ need not toggle $Q_d(x)$ by a clean additive $2^n$.
We do not claim a dimension-free half-lift recursion for arbitrary diagonal unit coefficients (or for general dyadic lattices) without further reduction.
In the dyadic setting, the half-lift mechanism is used here only to analyse the even ternary lattice $L_3$ in \eqref{eq:L3};
rank-$1$ Type~I blocks are handled separately via explicit square-root counts (Appendix~\ref{app:typeI}).
\end{remark}

For $d=3$, Theorem~\ref{thm:half-lift-principle} combines with a simple $4$-adic descent and a base computation modulo $8$ to yield closed forms for the three-squares counts once one reaches the stable range $n\ge 2k+3$ for $a=4^k a_0$ with $4\nmid a_0$ (and hence an explicit formula for $\alpha_2(t;L_3)$).
More generally, in the stable range the half-lift recursion has growth factor $2^{d-1}$, which is exactly the ``smooth'' Hensel factor $p^{d-1}$ specialised to $p=2$ (Remark~\ref{rem:half-lift-hensel}).
We also record explicit formulas for the even rank-$2$ Jordan blocks (hyperbolic and anisotropic).
As a consequence we obtain explicit densities $\alpha_2(t;B)$ for each basic dyadic block $B$ of Definition~\ref{def:dyadic-base-blocks}.

\begin{theorem}[Prime-uniform density for the hyperbolic plane]
\label{thm:prime-uniform-h0}
Let $p$ be a prime and let $e\ge 0$.
Let $H_0$ denote the rank-$2$ hyperbolic plane over $\Zp$ with bilinear form
\(\langle (x,y),(x',y')\rangle = xy'+x'y\).
On the scaled lattice $p^eH_0$ consider the quadratic form
\[
  q_e(v):=\tfrac12\langle v,v\rangle = p^e xy\in\Zp.
\]
For $m\ge 1$ and $t\in\Zp$ set
\[
  r^{(q)}_{m}(t;p^eH_0)
  :=\#\{(x,y)\in(\Z/p^m\Z)^2 : p^e xy\equiv t\!\!\pmod{p^m}\},
\]
and define the associated density (when the limit exists) by
\[
  \alpha_p^{(q)}(t;p^eH_0)
  :=\lim_{m\to\infty}p^{-m}\,r^{(q)}_{m}(t;p^eH_0).
\]
Let $t\in\Zp\setminus\{0\}$ and write $v:=v_p(t)$.
Then
\[
  \alpha_p^{(q)}(t;p^eH_0)=
  \begin{cases}
    0, & v<e,\\[2pt]
    (v-e+1)(p-1)p^{e-1}, & v\ge e.
  \end{cases}
\]
\smallskip
\noindent In particular, under the $q=\langle\cdot,\cdot\rangle/2$ normalisation, both the vanishing threshold and the density formula for the scaled hyperbolic plane are uniform for all primes $p$, including $p=2$.
\end{theorem}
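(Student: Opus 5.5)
The plan is to count directly the solutions of $p^e xy\equiv t \pmod{p^m}$ for $m$ large. The count is done by stratifying according to $j:=v_p(x)$, truncated at $m$. This avoids any lifting argument, so the same computation works for $p=2$; the point is that in the $q$-normalisation no factor $2$ appears in the congruence.

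First, the vanishing case. If $v<e$, take $m>v$. Every value $p^e xy$ is divisible by $p^e$ modulo $p^m$ (with $m\ge e$), while $t$ is not divisible by $p^e$. Hence $r^{(q)}_m=0$ for all large $m$, and the density is $0$.

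Now assume $v\ge e$ and take $m>v$. Write $t=p^v u$ with $u\in\Zp^\times$. The congruence becomes $p^e xy\equiv p^v u\pmod{p^m}$, which is equivalent to $xy\equiv p^{v-e}u\pmod{p^{m-e}}$ with $x,y$ still ranging over $\Z/p^m\Z$. Counting residues modulo $p^{m-e}$ and lifting both coordinates gives
\[
r^{(q)}_m(t;p^eH_0)=p^{2e}\,M'_{m-e}(p^{v-e}u),
\]
where $M'_k(s):=\#\{(x,y)\in(\Z/p^k\Z)^2: xy\equiv s \pmod{p^k}\}$. Presumably this is exactly the count $M_{p,k}(s)$ of \S\ref{sec:hyperbolic}, in which case I would simply cite its stable-range formula.

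For self-containedness, here is the count with $w:=v-e<k$. Since $v_p(xy)=w<k$, we need $v_p(x)=j$ with $0\le j\le w$. There are $(p-1)p^{k-j-1}$ choices of $x$ with $v_p(x)=j$; write $x=p^jx_0$ with $x_0$ a unit. The congruence then forces $y\equiv p^{w-j}u x_0^{-1}\pmod{p^{k-j}}$, which leaves $p^j$ choices of $y$ modulo $p^k$. Each $j$ therefore contributes $(p-1)p^{k-1}$, and summing over $j$ gives $M'_k=(w+1)(p-1)p^{k-1}$.

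Substituting $k=m-e$:
\[
r^{(q)}_m = p^{2e}(v-e+1)(p-1)p^{m-e-1},
\]
so
\[
p^{-m}r^{(q)}_m = (v-e+1)(p-1)p^{e-1},
\]
independently of $m$ for $m>v$. The limit therefore exists and equals the stated value.

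The only delicate point is bookkeeping rather than a real obstacle. One must make sure the division by $p^e$ correctly accounts for the $p^{2e}$ multiplicity of lifts, and that the choice $m>v$ excludes the degenerate strata where $x$ or $y$ vanishes modulo $p^{m-e}$. Finally, note that for $p=2$ the formula $q=\tfrac12\langle v,v\rangle=2^e xy$ makes the computation identical, which yields the prime-uniformity statement. Compatibility with the $Q$-normalised counts can then be read off via Lemma~\ref{lem:q-dictionary}.
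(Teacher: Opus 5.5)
Your proof is correct, but it takes a different route from the paper. The paper never counts in the $q$-normalisation directly. For odd $p$ it observes that $p^exy\equiv t$ is equivalent to $2p^exy\equiv 2t$ and quotes the $Q$-normalised Corollary~\ref{cor:density-hyp-scaled-odd}. For $p=2$ it combines the dictionary $\alpha_2^{(q)}(t;2^eH_0)=\tfrac12\alpha_2(2t;2^eH_0)$ from Lemma~\ref{lem:q-dictionary} with Corollary~\ref{cor:density-hyp-scaled}, which gives $\tfrac12\cdot 2^e(v+1-e)$.

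Your direct stratified count of $xy\equiv p^{v-e}u\pmod{p^{m-e}}$ has two advantages. It avoids both the case split in $p$ and the normalisation conversion. It also explains the prime-uniformity at its source: once the factor $2$ is gone, the computation is literally identical for every $p$. The paper's route instead presents the theorem as a formal consequence of its $Q$-normalised results, so it doubles as a consistency check of the $Q$/$q$ dictionary. In fact, the proof of Lemma~\ref{lem:2xy-2} already computes your $M'_k$ at $p=2$ as the auxiliary count $P_k$, multiplies by $4$ to pass to $2xy$, and the dictionary then undoes this.

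One correction to your aside. The count $M_{p,k}$ of \S\ref{sec:hyperbolic} counts solutions of $2xy\equiv s$, not $xy\equiv s$. For odd $p$ this does not change the stable-range value. At $p=2$, however, $M_{2,k}(s)=4P_{k-1}(s/2)$, which vanishes for odd $s$. Citing it would give $0$ instead of $2^{e-1}$ when $v=e$. Your self-contained count is what makes the argument correct, so keep it and drop the citation.
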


Theorem~\ref{thm:prime-uniform-h0} is a formal consequence of the $Q$-normalised density formulas for the hyperbolic plane (Corollaries~\ref{cor:density-hyp-scaled-odd} and~\ref{cor:density-hyp-scaled}) together with the conversion Lemma~\ref{lem:q-dictionary}; we give the short proof in \S\ref{sec:densities}.

\subsection*{Organisation}
Section~\ref{sec:prelim} fixes notation and recalls the dyadic Jordan decomposition and the basic dyadic blocks.
Section~\ref{sec:hyperbolic} treats the hyperbolic plane $H_0$ uniformly for all primes $p$ by explicit congruence counting, including closed forms for its stable-range counts and generating series.
Section~\ref{sec:anisotropic} treats the anisotropic even plane; here one can recast the counting problem as a norm-congruence in an unramified quadratic extension, where the unit locus admits a clean lifting recursion.
Section~\ref{sec:half-lift} proves the half-lift principle and derives closed forms for the anisotropic ternary lattice $L_3$.
Section~\ref{sec:densities} summarises the resulting density formulas.
Appendix~\ref{app:typeI} records the rank-$1$ (Type~I) dyadic block counts.

\subsection*{Computational checks}

The arguments in this paper are self-contained and elementary, and they do not rely on computation.
As an external consistency check, the arXiv source bundle includes ancillary files containing finite-level verifiers for the congruence counts and a Lean~4 formalisation of the half-lift involution (see \texttt{anc/verify.py} and the directory \texttt{anc/lean/}; build instructions are in \texttt{anc/README\_CODE.txt}).


%% file: sections/02_preliminaries_dyadic_jordan.tex
\section{Preliminaries and dyadic Jordan blocks}
\label{sec:prelim}

We fix general conventions for finite-level representation counts and local representation densities, and then recall the dyadic Jordan blocks at $p=2$.
We also record the relation between the two standard dyadic normalisations $Q$ and $q=Q/2$ on even lattices.

\subsection{Representation counts and densities}
\label{subsec:counts-densities}

For a prime $p$ we write $v_p(\cdot)$ for the $p$-adic valuation on $\Zp$, normalised by $v_p(p)=1$ and $v_p(0)=\infty$.
When working modulo $p^m$, for a residue class $s\in\Z/p^m\Z$ we also write
\(v_p(s)\in\{0,1,\dots,m\}\) for the largest exponent $v$ such that $s\equiv 0\pmod{p^v}$ (with the convention $v_p(0)=m$).

For $t\in\Zp\setminus\{0\}$ we will often write $t=p^{v_p(t)}u$ with $u\in\Zp^\times$.

A \emph{quadratic $\Zp$-lattice} is a free $\Zp$-module $L$ of finite rank equipped with a symmetric bilinear form $\langle\cdot,\cdot\rangle\colon L\times L\to\Zp$.
We write
\[Q(v):=\langle v,v\rangle\in\Zp\]
for the associated quadratic form.

Many authors instead start from an integral quadratic form $q$ with associated bilinear form
$B_q(x,y)=q(x+y)-q(x)-q(y)$. If $p$ is odd and $\langle\cdot,\cdot\rangle=B_q$, then $Q(v)=\langle v,v\rangle=2q(v)$, so the choice between $Q$ and $q$ is immaterial up to scaling the target by the unit~$2$.
At $p=2$ this distinction becomes substantive on even lattices, and we record an exact dictionary in \S\ref{subsec:q-convention}.

For $m\ge 1$ and $t\in\Zp$ define the finite-level representation count
\begin{equation}\label{eq:rep-count}
  r_m(t;L) := \#\bigl\{v\in L/p^mL : Q(v)\equiv t\pmod{p^m}\bigr\}.
\end{equation}
When $t$ is nonzero, the \emph{local representation density} is recovered as
\begin{equation}\label{eq:density-def}
  \alpha_p(t;L)
  := \lim_{m\to\infty} p^{-m(n-1)}\,r_m(t;L),\qquad n:=\mathrm{rank}_{\Zp}(L),
\end{equation}
whenever the limit exists.
Throughout the paper we restrict to \emph{nonzero targets} $t\ne 0$.
The case $t=0$ (the singular value in the Siegel-series terminology) behaves differently; see Remark~\ref{rem:singular-targets}.

\subsection{Dyadic Jordan blocks}

We now specialise to $p=2$.
A \emph{quadratic $\Ztwo$-lattice} is a quadratic $\Zp$-lattice for $p=2$, i.e.\ a free $\Ztwo$-module $L$ of finite rank equipped with a symmetric bilinear form $\langle\cdot,\cdot\rangle\colon L\times L\to\Ztwo$.
As above we work with the bilinear-lattice convention $Q(v)=\langle v,v\rangle$.

\begin{definition}[Basic dyadic blocks]\label{def:dyadic-base-blocks}
For the explicit computations in this paper we single out the following quadratic $\Ztwo$-lattices; we refer to them as the \emph{basic dyadic blocks}:
\begin{altenumerate}
\item \textbf{Type~I (rank $1$).} The lattice $\langle 2^a u\rangle$ with Gram matrix $[2^a u]$, where $a\ge 0$ and $u\in\Ztwo^\times$.
\item \textbf{Type~II (rank $2$, even).} The lattice $2^a H_\varepsilon$ with Gram matrix $2^a H_\varepsilon$, where $a\ge 0$ and
\[
  H_0:=\begin{pmatrix}0&1\\1&0\end{pmatrix}\qquad\text{and}\qquad
  H_1:=\begin{pmatrix}2&1\\1&2\end{pmatrix}.
\]
\end{altenumerate}
Only the square class of the unit $u$ (equivalently $u\bmod 8$) will matter for solvability in the rank-one case.
\end{definition}

\begin{remark}\label{rem:evenness-dyadic}
We use ``even'' in the quadratic sense: $Q(L)\subset 2\Ztwo$.
Equivalently (after choosing a $\Ztwo$-basis), the Gram matrix has all diagonal entries in $2\Ztwo$.
In rank~$2$ and determinant a unit, there are exactly two isomorphism classes of such even unimodular lattices over~$\Ztwo$, represented by $H_0$ (hyperbolic/split) and $H_1$ (anisotropic).
Scaling by $2^a$ produces all even binary Jordan blocks.
\end{remark}

\begin{proposition}[Jordan decomposition]\label{prop:jordan}
Let $L$ be a nondegenerate quadratic lattice over $\Ztwo$.
Then there exists an orthogonal decomposition
\[
  L \;\cong\; \perp_{a\ge 0} 2^a L_a,
\]
with each $L_a$ a unimodular quadratic $\Ztwo$-lattice (possibly $0$).
Moreover, the multiset of integers $a$ together with the multiplicities $\mathrm{rank}(L_a)$ (equivalently, the Jordan \emph{scales} and \emph{ranks}) is uniquely determined by $L$.
\end{proposition}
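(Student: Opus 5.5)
The proof has two parts. Existence comes from splitting off small unimodular pieces one at a time by induction on rank. Uniqueness comes from the fact that the elementary divisors (Smith normal form) of the Gram matrix are invariant under change of basis.

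\emph{Existence.} I induct on $n=\mathrm{rank}(L)$; the case $n=0$ is empty. Let $2^a$ be the \emph{scale} of $L$, the minimal $2$-adic valuation of the entries $\langle e_i,e_j\rangle$ of a Gram matrix. Nondegeneracy makes this finite, and it does not depend on the basis, since it is the generator of the ideal $\langle L,L\rangle$. Dividing the form by $2^a$, we may assume some entry is a unit. There are two cases.
\begin{altenumerate}
\item Some diagonal entry $\langle e,e\rangle$ is a unit. Then $\Ztwo e$ is unimodular of rank $1$. The map $v\mapsto v-\langle v,e\rangle\langle e,e\rangle^{-1}e$ is a projection onto $e^\perp$ and gives $L=\Ztwo e\perp e^\perp$.
\item All diagonal entries are even, but some $\langle e,f\rangle$ is a unit. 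The plane $P=\Ztwo e+\Ztwo f$ has Gram determinant $\langle e,e\rangle\langle f,f\rangle-\langle e,f\rangle^2\equiv -1\pmod 2$. This is a unit, so $P$ is unimodular. Orthogonal projection onto $P$ uses the inverse of this $2\times 2$ Gram matrix, which has entries in $\Ztwo$. Hence $L=P\perp P^\perp$.
\end{altenumerate}
In both cases the complement is again nondegenerate, and its scale is at least $2^a$, since its form is the restriction of the original one. The inductive hypothesis applies to the complement. Finally I collect all the pieces of a given scale $2^a$. Each has the form $2^a U$ with $U$ unimodular, and an orthogonal sum of unimodular lattices is unimodular. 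So the pieces of scale $2^a$ combine into $2^aL_a$.

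\emph{Uniqueness.} A change of basis replaces the Gram matrix $G$ by $P^{T}GP$ with $P\in\mathrm{GL}_n(\Ztwo)$. This preserves the Smith normal form of $G$ over the PID $\Ztwo$: its elementary divisors are determined by the gcds of minors, equivalently by the isomorphism type of the finite group $\Ztwo^n/G\Ztwo^n\cong L^\#/L$. For a decomposition $\perp_a 2^aL_a$, the Gram matrix is block diagonal with blocks $2^aG_a$, where each $G_a$ is invertible over $\Ztwo$. So the elementary divisors are exactly $2^a$, repeated $\mathrm{rank}(L_a)$ times. The multiset of scales with multiplicities is therefore an invariant of $L$.

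\emph{Main obstacle.} The only genuinely dyadic step is case (ii) of the existence argument. There no unit diagonal entry exists, so one cannot split off a line and must split off a binary block instead. The key check is that its determinant is odd, because even times even minus odd squared is odd. I would also verify that the resulting complement lattice is integral, which follows because the inverse Gram matrix of $P$ is integral.
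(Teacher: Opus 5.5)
Your proof is correct. The paper gives no argument of its own for Proposition~\ref{prop:jordan}: Remark~\ref{rem:jordan-background} treats it as background and refers to O'Meara and Kitaoka, so your write-up supplies a self-contained proof where the paper only cites. Your route is essentially the classical one found in those references. You split off a rank-$1$ or binary unimodular piece of minimal scale by an integral orthogonal projection, and you read off scales and ranks from the elementary divisors of the Gram matrix (equivalently, from $L^\#/L$ together with $\mathrm{rank}\,L$). So nothing is lost, and your version makes two things explicit. First, case (ii), the binary block with odd determinant, is the only genuinely dyadic step. At odd $p$ one could instead use $e+f$, whose norm $\langle e,e\rangle+2\langle e,f\rangle+\langle f,f\rangle$ is then a unit. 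Second, the uniqueness assertion is exactly the invariance of the Smith normal form under $G\mapsto P^{T}GP$. That argument determines only scales and ranks, which is all the proposition claims. Finer invariants, such as whether each $L_a$ is even or odd (alluded to in the remark), are not captured by it and need not be. One small wording point: in your closing remark, $P^\perp$ is automatically integral, being a sublattice with the restricted form. What integrality of the inverse Gram matrix of $P$ actually guarantees is that $P\perp P^\perp$ is all of $L$ rather than a sublattice of finite index, as you correctly state in case (ii).
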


\begin{remark}\label{rem:jordan-background}
We treat Proposition~\ref{prop:jordan} as background structure; detailed proofs can be found in standard references on integral quadratic forms, e.g.\ \cite[\S93]{omearaQuad} or \cite[Ch.~1]{kitaokaBook}.
For $\Ztwo$-lattices, a Jordan splitting as in Proposition~\ref{prop:jordan} is generally \emph{not} unique on the nose at the level of the unimodular constituents $L_a$; what is canonical is the associated Jordan symbol/type (and hence the scales, ranks, and the other local invariants). In this paper we use only the existence of some splitting and, in low ranks, the standard classification into the basic blocks below.
For the explicit computations in this paper we focus on the basic dyadic blocks of Definition~\ref{def:dyadic-base-blocks}.
In particular, any binary quadratic lattice over $\Ztwo$ decomposes into an orthogonal sum of scaled rank-$1$ blocks $\langle 2^a u\rangle$ and scaled even planes $2^aH_\varepsilon$; cf.\ the same references.
Once the basic block counts are known, representation counts for orthogonal sums reduce to finite convolutions (Proposition~\ref{prop:orthogonal-convolution}).
\end{remark}

\subsection{The $q=Q/2$ convention on even dyadic lattices}
\label{subsec:q-convention}

When $p=2$ and $L$ is even, a common alternative normalisation is to take $q:=Q/2$ as the primary quadratic form.
For convenience we record a dictionary between the two conventions.

\begin{lemma}[Dictionary with the $q=\langle\cdot,\cdot\rangle/2$ convention]\label{lem:q-dictionary}
Assume $L$ is \emph{even}, i.e.\ $Q(L)\subset 2\Ztwo$, and set $q(v):=Q(v)/2\in\Ztwo$.
For $m\ge 1$ define
\[
  r_m^{(q)}(t;L):=\#\{v\in L/2^mL : q(v)\equiv t\!\!\pmod{2^m}\}.
\]
Then for $t\in\Ztwo$ and $n:=\mathrm{rank}_{\Ztwo}(L)$:
\begin{altenumerate}
\item If $t$ is odd, then $r_m(t;L)=0$ for all $m\ge 1$.
\item If $t=2t'$ is even, then for every $m\ge 2$ one has
\[
  r_m(t;L)=2^{n}\,r_{m-1}^{(q)}(t';L).
\]
In particular, $r_1(2t';L)=2^n$.
\end{altenumerate}
Consequently, whenever the limits exist,
\[\alpha_2(2t';L)=2\,\alpha_2^{(q)}(t';L),\qquad
  \alpha_2^{(q)}(t';L):=\lim_{m\to\infty}2^{-m(n-1)}\,r_m^{(q)}(t';L).\]
\end{lemma}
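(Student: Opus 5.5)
We work with the reduction map $\pi\colon L/2^mL\to L/2^{m-1}L$ and ask, for a class $\bar w$ modulo $2^{m-1}$, whether $q$ of one of its lifts is well defined modulo $2^{m-1}$. The key identity is that for $w\in L$ and $h\in L$,
\[
  q(w+2^{m-1}h)=q(w)+2^{m-1}\langle w,h\rangle+2^{2(m-1)}q(h),
\]
using $Q(w+k)=Q(w)+2\langle w,k\rangle+Q(k)$ and dividing by $2$.
Since $q(h)\in\Ztwo$ (this is exactly where evenness enters) and $2(m-1)\ge m-1$, the value $q(w)\bmod 2^{m-1}$ depends only on $w\bmod 2^{m-1}L$. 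Hence $q$ induces a well-defined function on $L/2^{m-1}L$ with values in $\Z/2^{m-1}\Z$.

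For part (i), note that $Q(v)=2q(v)$ is even for every $v$. So $Q(v)\equiv t\pmod{2^m}$ with $m\ge1$ forces $t$ even, and $r_m(t;L)=0$ when $t$ is odd.

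For part (ii), take $m\ge 2$ and $t=2t'$. The congruence $Q(v)\equiv 2t'\pmod{2^m}$ is equivalent to $q(v)\equiv t'\pmod{2^{m-1}}$. By the well-definedness above, this condition depends only on $\pi(v)\in L/2^{m-1}L$. Each fibre of $\pi$ has exactly $2^n$ elements, since $L$ is free of rank $n$. Therefore
\[
  r_m(2t';L)=2^n\cdot\#\{\bar w\in L/2^{m-1}L: q(\bar w)\equiv t'\pmod{2^{m-1}}\}=2^n r^{(q)}_{m-1}(t';L).
\]
For $m=1$, the condition $Q(v)\equiv 2t'\pmod 2$ holds for every $v$, which gives $r_1=\#L/2L=2^n$.

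For the density statement, substitute (ii) into the definition \eqref{eq:density-def}:
\[
  2^{-m(n-1)}r_m(2t';L)=2^{n}\,2^{-(n-1)}\cdot 2^{-(m-1)(n-1)}r^{(q)}_{m-1}(t';L).
\]
The prefactor is $2^{n}2^{-(n-1)}=2$. Letting $m\to\infty$ gives $\alpha_2(2t';L)=2\,\alpha_2^{(q)}(t';L)$, with either limit existing if and only if the other does.

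There is no serious obstacle. The only delicate point is the well-definedness of $q$ modulo $2^{m-1}$ on $L/2^{m-1}L$: it needs the evenness of $L$ to make $q(h)$ integral, and $m\ge 2$ so that the level $m-1$ is at least $1$.
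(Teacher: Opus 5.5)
Your proof is correct and follows the paper's argument essentially verbatim. You rewrite $Q(v)\equiv 2t'\pmod{2^m}$ as $q(v)\equiv t'\pmod{2^{m-1}}$, observe that this depends only on $v\bmod 2^{m-1}L$ so that each solution class has $2^n$ lifts, and rescale the normalising factor to obtain $\alpha_2(2t';L)=2\,\alpha_2^{(q)}(t';L)$. Your expansion $q(w+2^{m-1}h)=q(w)+2^{m-1}\langle w,h\rangle+2^{2(m-1)}q(h)$ also usefully makes explicit the well-definedness step, which the paper only asserts.
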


\begin{proof}
If $t$ is odd, then $Q(v)$ is even for all $v\in L$ and there are no solutions.

Now suppose $t=2t'$.
For $m\ge 2$ one has
\[
  Q(v)\equiv 2t'\pmod{2^m}
  \quad\Longleftrightarrow\quad
  q(v)\equiv t'\pmod{2^{m-1}}.
\]
The right-hand condition depends only on the class of $v$ modulo $2^{m-1}L$.
Each class in $L/2^{m-1}L$ has exactly $2^n$ lifts to $L/2^mL$, and all lifts have the same $q$-value modulo $2^{m-1}$.
Thus $r_m(2t';L)=2^n r_{m-1}^{(q)}(t';L)$ for $m\ge 2$.

For $m=1$ the congruence is modulo $2$ and is automatic, hence $r_1(2t';L)=\#(L/2L)=2^n$.
The density identity follows by dividing by $2^{m(n-1)}$ and taking $m\to\infty$.
\end{proof}

\begin{remark}[On the $Q$--vs.~$q$ normalisation at $p=2$]\label{rem:Q-vs-q-normalisation}
Unless explicitly indicated (as in Theorem~\ref{thm:prime-uniform-h0}), we work with the bilinear-lattice convention
$Q(v)=\langle v,v\rangle$ and the densities $\alpha_p(t;L)$ of \eqref{eq:density-def}.
When $p=2$ and $L$ is even, many sources instead take the quadratic form $q:=Q/2$ as primary.
Lemma~\ref{lem:q-dictionary} gives an exact conversion between the two normalisations, and in particular
\[\alpha_2(2t;L)=2\,\alpha_2^{(q)}(t;L)\qquad (L\ \text{even}).\]
The summary Table~\ref{tab:block-densities} records $Q$-normalised densities; Theorem~\ref{thm:prime-uniform-h0} is stated in the $q$-normalisation to make its formula uniform in~$p$.
\end{remark}


%% file: sections/03_hyperbolic_plane_counts.tex
\section{Congruence counts for the hyperbolic plane}
\label{sec:hyperbolic}

In this section we compute the level-$p^m$ congruence counts for the hyperbolic plane $H_0$.
We treat odd primes and $p=2$ separately.
We then record the scaling reduction needed for $p^eH_0$ and package the answer into a generating series.
The stable-range formulas will be converted into density formulas in \S\ref{sec:densities}.

The computation is by a valuation stratification: the number of solutions of \eqref{eq:2xy} depends only on $v_p(s)$, and each valuation stratum contributes the same number of pairs.

The hyperbolic plane $H_0$ has Gram matrix $\begin{psmallmatrix}0&1\\1&0\end{psmallmatrix}$ and quadratic form $Q(x,y)=2xy$.
For a prime $p$ and an exponent $m\ge 1$ we therefore consider the basic congruence
\begin{equation}\label{eq:2xy}
  2xy\equiv s\pmod{p^m}.
\end{equation}

For $m\ge 1$ and $s\in\Z/p^m\Z$ define
\[
  M_{p,m}(s):=\#\{(x,y)\in (\Z/p^m\Z)^2: 2xy\equiv s\!\!\pmod{p^m}\}.
\]
When $p=2$ we write $M_m(s)$ for $M_{2,m}(s)$.

\subsection*{Odd primes}

\begin{lemma}[Counting $2xy\equiv s\pmod{p^m}$ at odd primes]\label{lem:2xy-odd}
Let $p$ be an odd prime, $m\ge 1$, and $s\in\Z/p^m\Z$.
Write $v:=v_p(s)$ with the convention $v=m$ if $s\equiv 0\pmod{p^m}$.
Then:
\begin{altenumerate}
\item If $0\le v<m$, then $M_{p,m}(s)=(v+1)(p-1)p^{m-1}$.
\item If $v=m$ (i.e.\ $s\equiv 0\pmod{p^m}$), then $M_{p,m}(0)=(m(p-1)+p)p^{m-1}$.
\end{altenumerate}
\end{lemma}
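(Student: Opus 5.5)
Since $p$ is odd, $2$ is a unit modulo $p^m$. The substitution $x\mapsto 2^{-1}x$ is a bijection of $\Z/p^m\Z$, so $M_{p,m}(s)=\#\{(x,y): xy\equiv s\pmod{p^m}\}$. I will stratify the pairs $(x,y)$ by $j:=v_p(x)\in\{0,1,\dots,m\}$, using the convention $v_p(0)=m$ fixed in \S\ref{subsec:counts-densities}. First I count the residues $x$ with $v_p(x)=j$: for $0\le j<m$ there are $p^{m-j}-p^{m-j-1}=(p-1)p^{m-j-1}$ of them, and for $j=m$ only $x=0$.

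Now fix $x=p^j w$ with $w$ a unit and $j<m$. The congruence $p^jwy\equiv s\pmod{p^m}$ is solvable in $y$ exactly when $p^j\mid s$, that is, when $j\le v$. In that case it reads $y\equiv w^{-1}(s/p^j)\pmod{p^{m-j}}$, which determines $y$ modulo $p^{m-j}$ and so leaves exactly $p^j$ residues $y$ modulo $p^m$. Each $x$ with $v_p(x)=j\le v$, $j<m$, therefore contributes $p^j$ solutions, and the whole stratum contributes $(p-1)p^{m-j-1}\cdot p^j=(p-1)p^{m-1}$, independently of $j$. This is the uniformity across valuation strata mentioned at the start of the section.

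For case (i), $v<m$, the stratum $j=m$ ($x=0$) contributes nothing because $s\not\equiv0$. The admissible strata are $j=0,\dots,v$, which gives $(v+1)(p-1)p^{m-1}$. For case (ii), $s\equiv0$, all strata $j=0,\dots,m-1$ contribute $m(p-1)p^{m-1}$ in total. The stratum $x=0$ contributes all $p^m$ values of $y$. The sum is $m(p-1)p^{m-1}+p^m=(m(p-1)+p)p^{m-1}$.

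There is no real obstacle here. The only point needing care is the bookkeeping of the stratum $x=0$: it is separate from the strata $j<m$, and it matters only when $s\equiv0$. I would also check that the count of $y$ per admissible $x$ is exactly $p^j$, which holds because multiplication by $p^j$ maps $\Z/p^m\Z$ onto $p^j\Z/p^m\Z$ with fibres of size $p^j$.
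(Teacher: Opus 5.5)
Your proof is correct and follows the paper's argument. You use that $2$ is a unit to reduce to $xy\equiv\cdot\pmod{p^m}$, stratify by $v_p(x)$ so that each admissible stratum contributes $(p-1)p^{m-1}$, and treat $x\equiv 0$ separately when $s\equiv 0$; the only cosmetic difference is that you absorb the unit $2$ into $x$, whereas the paper replaces $s$ by $2^{-1}s$.
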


\begin{proof}
Since $p$ is odd, $2\in(\Z/p^m\Z)^\times$ and 
\eqref{eq:2xy} is equivalent to
\[
  xy\equiv t\pmod{p^m},\qquad t:=2^{-1}s\in\Z/p^m\Z.
\]
The valuation $v_p(t)$ equals $v_p(s)=v$ because $2$ is a unit.

\smallskip
\noindent\emph{Case $0\le v<m$.}
Stratify by $i:=v_p(x)$.
For each $0\le i\le v$, the number of residue classes $x\bmod p^m$ with $v_p(x)=i$ is $(p-1)p^{m-i-1}$.
Write $x=p^i x'$ with $x'\in(\Z/p^{m-i}\Z)^\times$.
Then $xy\equiv t\pmod{p^m}$ becomes
\[
  x'\,y\equiv p^{v-i}t'\pmod{p^{m-i}},\qquad t':=t/p^v\in(\Z/p^{m-v}\Z)^\times.
\]
As $x'$ is a unit modulo $p^{m-i}$, there is a unique solution $y_0\bmod p^{m-i}$.
It has exactly $p^i$ lifts modulo $p^m$, so for each fixed $i$ the number of pairs with $v_p(x)=i$ is
\[(p-1)p^{m-i-1}\cdot p^i=(p-1)p^{m-1}.
\]
Summing over $i=0,1,\dots,v$ yields $M_{p,m}(s)=(v+1)(p-1)p^{m-1}$.

\smallskip
\noindent\emph{Case $v=m$ (i.e.\ $s\equiv 0\bmod p^m$).}
We count solutions to $xy\equiv 0\pmod{p^m}$.
For each $0\le i\le m-1$ there are $(p-1)p^{m-i-1}$ choices of $x$ with $v_p(x)=i$, and then $y$ must be divisible by $p^{m-i}$, giving $p^i$ choices for $y\bmod p^m$.
Thus each $i\in\{0,\dots,m-1\}$ contributes $(p-1)p^{m-1}$ pairs, for a total of $m(p-1)p^{m-1}$.
Finally, if $x\equiv 0\pmod{p^m}$ there are $p^m$ choices for $y$.
Therefore
\[
  M_{p,m}(0)=m(p-1)p^{m-1}+p^m=(m(p-1)+p)p^{m-1},
\]
as claimed.
\end{proof}

\subsection*{The dyadic prime}

\begin{lemma}[Counting $2xy\equiv s\pmod{2^m}$ at $p=2$]\label{lem:2xy-2}
Let $m\ge 1$ and $s\in\Z/2^m\Z$.
Write $v:=v_2(s)$ with the convention $v=m$ if $s\equiv 0\pmod{2^m}$.
Then:
\begin{altenumerate}
\item If $v=0$ (i.e.\ $s$ is odd), then $M_m(s)=0$.
\item If $1\le v<m$, then $M_m(s)=v\cdot 2^m$.
\item If $v=m$ (i.e.\ $s\equiv 0\pmod{2^m}$), then $M_m(0)=(m+1)2^m$.
\end{altenumerate}
\end{lemma}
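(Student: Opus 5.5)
The plan is to reduce \eqref{eq:2xy} at $p=2$ to a congruence $xy\equiv s'\pmod{2^{m-1}}$ and then reuse the valuation stratification from the proof of Lemma~\ref{lem:2xy-odd}.

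\smallskip
\noindent\emph{Odd targets.} If $s$ is odd, the left side $2xy$ is even, so there are no solutions and $M_m(s)=0$. This covers (i), and in particular the case $m=1$, $s$ odd.

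\smallskip
\noindent\emph{Reduction for even targets.} Assume $s$ is even and write $s=2s'$ with $s'\in\Z/2^{m-1}\Z$. The key observation is
\[
  2xy\equiv 2s'\pmod{2^m}\iff xy\equiv s'\pmod{2^{m-1}}.
\]
This condition depends only on $(x,y)$ modulo $2^{m-1}$. Each pair modulo $2^{m-1}$ has exactly $4$ lifts to $(\Z/2^m\Z)^2$, so
\[
  M_m(s)=4\,A_{m-1}(s'),
\]
where $A_k(r)$ counts solutions of $xy\equiv r\pmod{2^k}$. For $m=1$ the congruence is vacuous and $M_1(0)=4=(1+1)2^1$, consistent with (iii). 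From now on take $m\ge 2$ and $k:=m-1\ge 1$.

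\smallskip
\noindent\emph{Counting $A_k$.} The counting in the proof of Lemma~\ref{lem:2xy-odd} used only the unit/valuation structure of $\Z/p^k\Z$, never that $2$ is invertible, so it applies verbatim to $xy\equiv r\pmod{2^k}$ with $p=2$. Writing $w=v_2(r)$, it gives:
\begin{itemize}
\item for $0\le w<k$: $A_k(r)=(w+1)2^{k-1}$;
\item for $r\equiv 0$: $A_k(0)=(k+2)2^{k-1}$.
\end{itemize}

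\smallskip
\noindent\emph{Translating back.} Two bookkeeping points need care here. First, $v=v_2(s)=w+1$, where $w=v_2(s')$ is computed modulo $2^{m-1}$. Second, the condition $1\le v<m$ corresponds exactly to $0\le w<m-1$, and $v=m$ corresponds to $s'\equiv 0\pmod{2^{m-1}}$, using the paper's convention for valuations of residue classes.

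With these, case (ii) gives
\[
  M_m(s)=4\cdot v\cdot 2^{m-2}=v\,2^m,
\]
and case (iii) gives
\[
  M_m(0)=4\,(m+1)\,2^{m-2}=(m+1)2^m,
\]
which are the claimed formulas.
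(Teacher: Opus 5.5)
Your proposal is correct and follows essentially the same route as the paper: you reduce $2xy\equiv 2s'\pmod{2^m}$ to $xy\equiv s'\pmod{2^{m-1}}$, pick up the factor $4$ from lifts, and count by stratifying on $v_2(x)$, importing that count from the odd-prime proof where the paper simply redoes it for $p=2$. Your separate treatment of $m=1$ is a small improvement over the paper. The paper's substitution $k=m-1$ formally uses $P_0$, which it had only defined for $k\ge 1$.
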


\begin{proof}
\noindent\emph{Strategy.} If $s$ is even, we reduce $2xy\equiv s\pmod{2^m}$ to $xy\equiv t\pmod{2^{m-1}}$, stratify by $v_2(x)$, and then use that each solution modulo $2^{m-1}$ lifts to exactly $4$ solutions modulo $2^m$.

If $s$ is odd there are no solutions since $2xy$ is always even.

Now assume $s$ is even and write $s=2t$.
Then $2xy\equiv 2t\pmod{2^m}$ is equivalent to $xy\equiv t\pmod{2^{m-1}}$.
Each solution modulo $2^{m-1}$ has exactly $4$ lifts modulo $2^m$, obtained by independently adding $0$ or $2^{m-1}$ to $x$ and to $y$.
Thus $M_m(s)=4P_{m-1}(t)$, where for $k\ge 1$ we set
\[
  P_k(t):=\#\{(x,y)\in(\Z/2^k\Z)^2: xy\equiv t\!\!\pmod{2^k}\}.
\]
We compute $P_k(t)$ by a valuation stratification.
Write $u:=v_2(t)$ with the convention $u=k$ if $t\equiv 0\pmod{2^k}$.

\smallskip
\noindent\emph{Case 1: $t\not\equiv 0\pmod{2^k}$ (so $0\le u<k$).}
For $0\le i\le u$, the number of residues $x\bmod 2^k$ with $v_2(x)=i$ is $2^{k-i-1}$.
Writing $x=2^i x'$ with $x'$ odd modulo $2^{k-i}$, the congruence $xy\equiv t\pmod{2^k}$ becomes
\[
  x'\,y\equiv 2^{u-i}t'\pmod{2^{k-i}},\qquad t':=t/2^u\in(\Z/2^{k-u}\Z)^\times.
\]
Since $x'$ is a unit modulo $2^{k-i}$, there is a unique solution $y_0\bmod 2^{k-i}$, and it has exactly $2^i$ lifts modulo $2^k$.
Hence, for each fixed $i$, the number of pairs with $v_2(x)=i$ is
$2^{k-i-1}\cdot 2^i=2^{k-1}$.
Summing over $i=0,1,\dots,u$ gives
\[
  P_k(t)=(u+1)\,2^{k-1}.
\]

\smallskip
\noindent\emph{Case 2: $t\equiv 0\pmod{2^k}$ (so $u=k$).}
For $0\le i\le k-1$, the same count shows there are $2^{k-i-1}$ residues $x$ with $v_2(x)=i$.
The condition $xy\equiv 0\pmod{2^k}$ then forces $y\equiv 0\pmod{2^{k-i}}$, giving $2^i$ choices for $y\bmod 2^k$.
Thus each $i\in\{0,\dots,k-1\}$ contributes $2^{k-1}$ pairs.
Finally, for $x\equiv 0\pmod{2^k}$ there are $2^k$ choices for $y$.
Therefore
\[
  P_k(0)=k\cdot 2^{k-1}+2^k=(k+2)\,2^{k-1}.
\]

Finally, substitute $k=m-1$ into $M_m(s)=4P_{m-1}(t)$.
Write $v:=v_2(s)$ with the convention $v=m$ if $s\equiv 0\pmod{2^m}$.
If $1\le v<m$, then $s=2t$ implies $v_2(t)=v-1$, so $u=v-1$ and we find
\[
  M_m(s)=4(u+1)2^{(m-1)-1}=v\cdot 2^m.
\]
If $v=m$, then $t\equiv 0\pmod{2^{m-1}}$ so $u=k=m-1$ and
\[
  M_m(0)=4\,(k+2)2^{k-1}=(m+1)\,2^m.
\]
This yields the stated formulas.
\end{proof}

\begin{lemma}[Scaling reduction for the hyperbolic block]\label{lem:scaling-hyp}
Let $p$ be a prime, $m\ge 1$, and $e\ge 0$.
Consider the scaled congruence
\begin{equation}\label{eq:scaled-hyperbolic}
  2p^e xy\equiv s\pmod{p^m}.
\end{equation}
Define
\[
  M^{(e)}_{p,m}(s):=\#\{(x,y)\in(\Z/p^m\Z)^2: 2p^e xy\equiv s\pmod{p^m}\}.
\]
If $v_p(s)<e$ then $M^{(e)}_{p,m}(s)=0$.
If $v_p(s)\ge e$, then writing $s=p^e s_e$ one has
\[
  M^{(e)}_{p,m}(s)=p^{2e}\,M_{p,m-e}(s_e)
\]
for all $m>e$, with the convention $M_{p,0}(\cdot)=1$.
When $p=2$ we abbreviate $M^{(e)}_m(s):=M^{(e)}_{2,m}(s)$.
\end{lemma}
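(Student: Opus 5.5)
The plan is to reduce the scaled congruence \eqref{eq:scaled-hyperbolic} to the unscaled one \eqref{eq:2xy} at the lower level $p^{m-e}$. I will divide out the factor $p^e$ exactly, and never divide by $2$, so the argument works uniformly for all primes, including $p=2$. It then remains to count the lifts from level $p^{m-e}$ to level $p^m$.

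\emph{Vanishing case.} The integer $2p^exy$ is divisible by $p^e$. If $v_p(s)<e$ (valuation of the residue class as in \S\ref{subsec:counts-densities}), then $s\not\equiv 0\pmod{p^{\min(e,m)}}$. A solution of \eqref{eq:scaled-hyperbolic} would force $s\equiv 2p^exy\equiv 0\pmod{p^{\min(e,m)}}$, which is a contradiction. Hence $M^{(e)}_{p,m}(s)=0$.

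\emph{Main case.} Assume $v_p(s)\ge e$ and $m>e$. Write $s=p^es_e$; here $s_e$ is well defined modulo $p^{m-e}$, which is the only information that enters. For integers $x,y$,
\[
  2p^exy\equiv p^es_e\pmod{p^m}\iff 2xy\equiv s_e\pmod{p^{m-e}},
\]
because $p^e\mid p^m$ allows us to cancel $p^e$ from the modulus. The right-hand condition depends only on $(x,y)$ modulo $p^{m-e}$. Under the reduction map $(\Z/p^m\Z)^2\to(\Z/p^{m-e}\Z)^2$, every pair has exactly $p^e\cdot p^e=p^{2e}$ preimages. Therefore the solution set modulo $p^m$ is the full preimage of the solution set modulo $p^{m-e}$, and
\[
  M^{(e)}_{p,m}(s)=p^{2e}M_{p,m-e}(s_e).
\]

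The convention $M_{p,0}(\cdot)=1$ is only a bookkeeping device for the boundary case $m=e$, which lies outside the stated range $m>e$. I expect no real obstacle here. The only point needing care is that $s_e$ is defined only modulo $p^{m-e}$, so the claimed equality must be read with $M_{p,m-e}$ evaluated at the class of $s_e$, and this class is well defined.
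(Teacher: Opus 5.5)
Your main case is the paper's argument: cancel $p^e$ to get $2xy\equiv s_e\pmod{p^{m-e}}$, then count the $p^{2e}$ lifts of each solution. That part is correct. Your remark that the convention $M_{p,0}(\cdot)=1$ only concerns the boundary case $m=e$ is also accurate.

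The vanishing step, however, rests on a false implication. You use the residue-class convention, under which $v_p(s)$ is capped at $m$. So when $m<e$ and $s\equiv 0\pmod{p^m}$, you have $v_p(s)=m<e$, and yet $s\equiv 0\pmod{p^{\min(e,m)}}$. The claim "$v_p(s)<e\Rightarrow s\not\equiv 0\pmod{p^{\min(e,m)}}$" therefore fails there. In that regime every pair is a solution, because $p^m\mid 2p^exy$, so $M^{(e)}_{p,m}(s)=p^{2m}\neq 0$. For example, $p=3$, $e=2$, $m=1$, $s=0$ gives $9$; reading $s\in\Zp$ instead, $s=3$ gives the same.

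So the vanishing clause of the statement is itself false whenever $m\le v_p(s)<e$, and no argument can close this. The paper's one-line proof ("the left-hand side is divisible by $p^e$, so no solutions exist") has the same blind spot. What your argument actually proves is vanishing under the extra hypothesis $v_p(s)<m$, i.e.\ $s\not\equiv 0\pmod{p^m}$. This holds automatically whenever $m\ge e$, or $m>v_p(s)$ for $s\in\Zp$. That is the only case the paper uses later: Corollaries~\ref{cor:density-hyp-scaled-odd} and~\ref{cor:density-hyp-scaled} take $m>\max\{e,v\}$. You should state that hypothesis explicitly rather than assert $s\not\equiv 0\pmod{p^{\min(e,m)}}$ unconditionally.
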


\begin{proof}
If $v_p(s)<e$, then the left-hand side of \eqref{eq:scaled-hyperbolic} is divisible by $p^e$ for every $(x,y)$, so no solutions exist.
Assume $v_p(s)\ge e$ and write $s=p^e s_e$.
Reducing \eqref{eq:scaled-hyperbolic} modulo $p^{m-e}$ gives
\[
  2xy\equiv s_e\pmod{p^{m-e}}.
\]
Any solution $(x_0,y_0)$ modulo $p^{m-e}$ lifts to exactly $p^{2e}$ solutions modulo $p^m$ by choosing arbitrary lifts of $x_0$ and $y_0$ through the additional $e$ $p$-adic digits.
When $m=e$ the reduced modulus is $p^0=1$, so there is a unique residue class, which we record as the convention $M_{p,0}(\cdot)=1$.
\end{proof}

\subsection*{Generating series}

For some applications it is convenient to package the counts into a generating series.
For a prime $p$, an exponent $e\ge 0$, and $s\in\Zp$, define
\begin{equation}\label{eq:hyperbolic-genfun}
  \mathcal{B}^{(e)}_{p,s}(X)
  :=\sum_{m\ge 1} M^{(e)}_{p,m}(s)\,X^m.
\end{equation}
We view $X$ as a formal variable, so \eqref{eq:hyperbolic-genfun} is a formal power series.
Lemma~\ref{lem:scaling-hyp} reduces \eqref{eq:hyperbolic-genfun} to the unscaled case $e=0$, so we record explicit closed forms for $\mathcal{B}^{(0)}_{p,s}$.

\begin{proposition}[Closed form for odd primes]\label{prop:hyperbolic-series-odd}
Let $p$ be an odd prime and let $s\in\Zp$ with $v:=v_p(s)\in\Z_{\ge 0}\cup\{\infty\}$.
Then the series
\[
  \mathcal{B}^{(0)}_{p,s}(X)=\sum_{m\ge 1} M_{p,m}(s)\,X^m
\]
admits the explicit decomposition
\[
  \mathcal{B}^{(0)}_{p,s}(X)
  =
  \underbrace{\sum_{m=1}^{v} p^{m-1}\bigl(p+(p-1)m\bigr)X^m}_{=:P_{p,v}(X)}
  +\underbrace{\frac{(v+1)(p-1)p^v X^{v+1}}{1-pX}}_{=:T_{p,v}(X)},
\]
with the conventions that the finite sum $P_{p,v}$ is empty when $v=0$, and that for $s=0$ (i.e.\ $v=\infty$) the tail term is absent and the first term sums to the convergent closed form
\[
  \mathcal{B}^{(0)}_{p,0}(X)
  =X\left(\frac{p}{1-pX}+\frac{p-1}{(1-pX)^2}\right).
\]
\end{proposition}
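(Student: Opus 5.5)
The closed form follows by inserting the explicit counts of Lemma~\ref{lem:2xy-odd} into the series and splitting according to the relation between $m$ and $v=v_p(s)$. Here $s\in\Zp$ is viewed modulo $p^m$. Its residue class has valuation $\min(v,m)$ in the convention of \S\ref{subsec:counts-densities}. So for $m\le v$ the class is $0$ modulo $p^m$, and Lemma~\ref{lem:2xy-odd}(ii) gives $M_{p,m}(s)=(m(p-1)+p)p^{m-1}$. For $m>v$ (possible only when $v<\infty$), the valuation is $v<m$, and Lemma~\ref{lem:2xy-odd}(i) gives $M_{p,m}(s)=(v+1)(p-1)p^{m-1}$. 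The first range reproduces exactly the summand of $P_{p,v}(X)$, and this range is empty when $v=0$.

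For the tail, I sum the geometric series
\[
  \sum_{m\ge v+1}(v+1)(p-1)p^{m-1}X^m=(v+1)(p-1)p^{v}X^{v+1}\sum_{j\ge0}(pX)^j
  =\frac{(v+1)(p-1)p^vX^{v+1}}{1-pX}.
\]
This is an identity of formal power series, since $1-pX$ is invertible in $\Z[[X]]$. It yields $T_{p,v}(X)$.

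For $s=0$ every $m$ falls into the first range, and the series is
\[
  \sum_{m\ge1}\bigl(p+(p-1)m\bigr)p^{m-1}X^m .
\]
I split it into the two standard series
\[
  p\sum_{m\ge1}p^{m-1}X^m=\frac{pX}{1-pX},\qquad
  (p-1)\sum_{m\ge1}m\,p^{m-1}X^m=\frac{(p-1)X}{(1-pX)^2}.
\]
The second uses $\sum_{m\ge1} m y^{m-1}=(1-y)^{-2}$ with $y=pX$, followed by multiplication by $X$. Adding them gives the stated closed form. Here ``convergent'' is meant formally, or for $|X|<1/p$.

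There is no real obstacle. The only point needing care is the boundary index $m=v$. There $s\equiv0\pmod{p^m}$, so the count comes from case (ii) of Lemma~\ref{lem:2xy-odd} rather than case (i), and this is why the finite sum runs up to $m=v$ inclusive.
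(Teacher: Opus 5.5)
Your proposal is correct and follows the paper's own proof: you split at $m\le v$ versus $m>v$, apply Lemma~\ref{lem:2xy-odd}(ii) and (i) respectively, and sum the geometric tail. For $s=0$ you split the series into the two elementary pieces $\frac{pX}{1-pX}$ and $\frac{(p-1)X}{(1-pX)^2}$. You also spell out the boundary index $m=v$ and the series manipulations, which the paper leaves implicit.
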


\begin{proof}
If $s=0$, then $M_{p,m}(0)=p^{m-1}(p+(p-1)m)$ for all $m\ge 1$ by Lemma~\ref{lem:2xy-odd}(ii), and the displayed rational function is the sum of the resulting elementary $q$-series.
Assume $s\ne 0$ and write $v=v_p(s)<\infty$.
For $m\le v$ one has $s\equiv 0\pmod{p^m}$, hence Lemma~\ref{lem:2xy-odd}(ii) gives $M_{p,m}(s)=M_{p,m}(0)=p^{m-1}(p+(p-1)m)$.
For $m>v$, Lemma~\ref{lem:2xy-odd}(i) gives $M_{p,m}(s)=(v+1)(p-1)p^{m-1}$, which sums to the geometric tail $T_{p,v}(X)$.
\end{proof}

\begin{proposition}[Closed form at $p=2$]\label{prop:hyperbolic-series-two}
Let $s\in\Ztwo$ and $v:=v_2(s)\in\Z_{\ge 0}\cup\{\infty\}$.
Then:
\begin{altenumerate}
\item If $v=0$ (i.e.\ $s$ is odd), then $\mathcal{B}^{(0)}_{2,s}(X)=0$.
\item If $1\le v<\infty$, then
\[
  \mathcal{B}^{(0)}_{2,s}(X)
  =\sum_{m=1}^{v}(m+1)2^{m}X^m
  +\frac{v\cdot 2^{v+1} X^{v+1}}{1-2X}.
\]
\item If $s=0$ (i.e.\ $v=\infty$), then
\[
  \mathcal{B}^{(0)}_{2,0}(X)=\sum_{m\ge 1}(m+1)2^{m}X^m=\frac{4X(1-X)}{(1-2X)^2}.
\]
\end{altenumerate}
\end{proposition}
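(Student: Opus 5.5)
The closed forms follow by substituting Lemma~\ref{lem:2xy-2} into the definition \eqref{eq:hyperbolic-genfun} with $e=0$ and summing the resulting geometric series; no lifting argument is needed. The one point requiring care is the bookkeeping of which case of Lemma~\ref{lem:2xy-2} applies at each level $m$. Given $s\in\Ztwo$, the residue $s\bmod 2^m$ has valuation $\min(v_2(s),m)$, with the paper's convention that the zero class has valuation $m$. So we split the range of $m$ according to whether $m\le v$ or $m>v$, exactly as in the proof of Proposition~\ref{prop:hyperbolic-series-odd}.

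For (i), if $s$ is odd then $s\bmod 2^m$ is odd for every $m\ge 1$. Lemma~\ref{lem:2xy-2}(i) then gives $M_m(s)=0$ for all $m$, so the series vanishes.

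For (ii), with $1\le v<\infty$ we treat the two ranges separately.
\begin{altitemize}
\item \emph{Range $m\le v$.} Here $s\equiv 0\pmod{2^m}$, so Lemma~\ref{lem:2xy-2}(iii) gives $M_m(s)=(m+1)2^m$. This produces the finite polynomial part.
\item \emph{Range $m>v$.} Here $1\le v<m$, so Lemma~\ref{lem:2xy-2}(ii) gives $M_m(s)=v\,2^m$. The tail is therefore
\[
\sum_{m\ge v+1} v\,2^mX^m=\frac{v\,2^{v+1}X^{v+1}}{1-2X}.
\]
\end{altitemize}
This tail evaluation is the only computation in (ii).

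For (iii), when $s=0$ every level falls under Lemma~\ref{lem:2xy-2}(iii), so $M_m(0)=(m+1)2^m$ for all $m\ge1$. To get the closed form we use the identities
\[
\sum_{m\ge1}2^mX^m=\frac{2X}{1-2X},\qquad \sum_{m\ge1}m\,2^mX^m=\frac{2X}{(1-2X)^2}.
\]
Adding them gives
\[
\frac{2X(1-2X)+2X}{(1-2X)^2}=\frac{4X-4X^2}{(1-2X)^2}=\frac{4X(1-X)}{(1-2X)^2},
\]
as claimed.

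There is no real obstacle here; the only subtle step is checking that the boundary level $m=v$ belongs to the ``zero'' case (iii) of Lemma~\ref{lem:2xy-2} rather than case (ii). That is what makes the polynomial part run up to $m=v$ inclusive and the tail start at $X^{v+1}$.
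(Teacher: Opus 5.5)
Your proposal is correct and follows the paper's proof: you split the levels into $m\le v$ (Lemma~\ref{lem:2xy-2}(iii)) and $m>v$ (Lemma~\ref{lem:2xy-2}(ii)) and sum the geometric tail. Your explicit check of the rational closed form $\frac{4X(1-X)}{(1-2X)^2}$ in (iii) is a small addition; the paper only asserts that part.
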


\begin{proof}
If $v=0$ then $M_m(s)=0$ for all $m$ by Lemma~\ref{lem:2xy-2}(i).
If $1\le v<\infty$, then for $m\le v$ one has $s\equiv 0\pmod{2^m}$ so Lemma~\ref{lem:2xy-2}(iii) gives $M_m(s)=M_m(0)=(m+1)2^m$; and for $m>v$, Lemma~\ref{lem:2xy-2}(ii) gives $M_m(s)=v\cdot 2^m$.
Repackaging these values into generating-series form yields (ii), where the second term is the geometric tail $\sum_{m\ge v+1} v\cdot 2^m X^m$.
The $s=0$ case is the same computation without a tail.
\end{proof}


%% file: sections/04_anisotropic_even_plane.tex
\section{The anisotropic even plane}
\label{sec:anisotropic}

We next treat the anisotropic even plane $H_1$ over $\Ztwo$.
After rewriting the form $x^2+xy+y^2$ as a norm form in the unramified quadratic extension of $\Qtwo$, one obtains a clean recursion for lifting unit solutions.
This yields the stable-range formulas for the counts $A_m(t)$ in Lemma~\ref{lem:H1}, together with the scaling reduction in Lemma~\ref{lem:scaling-H1}.

The anisotropic plane $H_1$ has Gram matrix
$\begin{psmallmatrix}2&1\\1&2\end{psmallmatrix}$.
Its quadratic form is
\[Q(x,y)=2(x^2+xy+y^2).\]

For $m\ge 1$ and $t\in\Ztwo$ define
\[
  A_m(t) := \#\{(x,y)\in(\Z/2^m\Z)^2 : 2(x^2+xy+y^2)\equiv t\pmod{2^m}\}.
\]
This count depends only on the residue class of $t$ modulo $2^m$.

\begin{lemma}[Counts for the anisotropic even plane]\label{lem:H1}
Let $m\ge 1$ and $t\in\Ztwo$.
Write $v:=v_2(t)$ with the convention $v=\infty$ if $t=0$.
Then:
\begin{altenumeratealpha}
\item If $t\ne 0$ and $v$ is even, then $A_m(t)=0$ for all $m>v$.
\item If $t\ne 0$ and $v$ is odd, then $A_m(t)=3\cdot 2^m$ for all $m>v$.
\item If $t\equiv 0\pmod{2^m}$, then $A_m(t)=4^{\lceil m/2\rceil}$.
\end{altenumeratealpha}
\end{lemma}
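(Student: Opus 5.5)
The plan is to reduce everything to the norm form $N(x,y)=x^2+xy+y^2$, which is the norm from $\Z_2[\omega]$, $\omega^2+\omega+1=0$, the ring of integers of the unramified quadratic extension of $\Qtwo$. First, $t$ odd gives no solutions, since $Q$ is even. For $t=2t'$ and $m\ge 2$ we argue as in Lemma~\ref{lem:q-dictionary}: $A_m(t)=4\,B_{m-1}(t')$, where
\[
B_k(c):=\#\{(x,y)\in(\Z/2^k\Z)^2 : N(x,y)\equiv c\pmod{2^k}\}.
\]
So it suffices to compute $B_k(c)$ when $v_2(c)=w<k$, and also $B_k(0)$.

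\emph{Unit locus.} If $(x,y)$ is not $\equiv(0,0)\pmod 2$, then $N(x,y)$ is odd, because $N\equiv x^2+xy+y^2$ is nonzero on $\mathbb F_2^2\setminus\{0\}$. Conversely, if both $x,y$ are even, then $N\equiv 0\pmod 4$. Thus $N(x,y)$ is odd exactly when $(x,y)$ is primitive mod $2$, and $N(x,y)$ is even exactly when $N(x,y)\equiv0\pmod 4$ and $x\equiv y\equiv 0\pmod 2$. This is where anisotropy shows up.

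For odd $c$ we prove $B_k(c)=3\cdot 2^{k-1}$ by induction on $k$. At $k=1$ there are $3$ primitive vectors, each with $N=1$. For the lifting step, take a solution mod $2^k$ and translate $(x,y)\mapsto(x+2^ka,\,y+2^kb)$. Then
\[
N\mapsto N+2^k\bigl((2x+y)a+(x+2y)b\bigr)\pmod{2^{k+1}},
\]
and for $k\ge1$ we have $(2x+y,\,x+2y)\equiv(y,x)\not\equiv(0,0)\pmod 2$. This linear form is therefore surjective mod $2$, so exactly $2$ of the $4$ lifts are solutions. Hence $B_{k+1}(c)=2B_k(c)$. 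Here, unlike the diagonal case, the Jacobian is nonzero mod $2$, so Hensel works on the unit locus. Equivalently, the unit norm map $(\Z_2[\omega]/2^k)^\times\to(\Z/2^k)^\times$ is surjective with equal fibres.

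\emph{Descent on valuation.} If $c$ is even and $k\ge 2$, then every solution has $x=2x_1$, $y=2y_1$, and $N(x_1,y_1)\equiv c/4\pmod{2^{k-2}}$. If $c\equiv 2\pmod 4$ there is no solution. The pairs $(x,y)\bmod 2^k$ of the form $(2x_1,2y_1)$ correspond to $(x_1,y_1)\bmod 2^{k-1}$, and each class mod $2^{k-2}$ has $4$ lifts. This gives $B_k(c)=4B_{k-2}(c/4)$ for $k\ge 3$, with the small cases $k=2$ (and a modulus $2^1$ or $2^0$ at the bottom) checked by hand. Iterating for $v_2(c)=w<k$ gives $0$ if $w$ is odd. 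If $w$ is even, we get $4^{w/2}B_{k-w}(c/4^{w/2})=2^w\cdot 3\cdot 2^{k-w-1}=3\cdot 2^{k-1}$. Translating back with $c=t/2$, $w=v-1$, $k=m-1$ gives (a) and (b): $A_m(t)=4\cdot 3\cdot 2^{m-2}=3\cdot 2^m$ when $v$ is odd, and $0$ when $v$ is even.

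For (c), the same descent gives $B_k(0)=4B_{k-2}(0)$ with $B_0(0)=1$ and $B_1(0)=1$, so $B_k(0)=4^{\lceil k/2\rceil}$ up to checking the parity bookkeeping. Then $A_m(0)=4B_{m-1}(0)$ for $m\ge 2$, together with the direct case $A_1(0)=4$, should give $4^{\lceil m/2\rceil}$. I expect the main obstacle to be exactly this bookkeeping. I will verify the small levels $m=1,2,3$ directly before trusting the formula; for instance, $A_2(0)=4B_1(0)=4$ agrees with $4^{1}$. I also need to make sure the reduction $A_m=4B_{m-1}$ and the descent base cases $k\le 2$ are stated with consistent conventions.
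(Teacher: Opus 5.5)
Your proof is correct and is essentially the paper's proof: the same norm-form reformulation, the same mod-$2$ linear form $(a,b)\mapsto ya+xb$ showing exactly two of four lifts survive on the unit locus, and the same parity-of-valuation obstruction. The one difference is that you get the parity by the elementary descent $B_k(c)=4B_{k-2}(c/4)$, from anisotropy of $x^2+xy+y^2$ mod $2$, rather than from the factorisation $z=2^ku$ in $\mathcal O$.

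One correction in (c): your recursion with $B_0(0)=B_1(0)=1$ gives $B_k(0)=4^{\lfloor k/2\rfloor}$, not $4^{\lceil k/2\rceil}$. It is this floor value that yields $A_m(0)=4\cdot 4^{\lfloor (m-1)/2\rfloor}=4^{\lceil m/2\rceil}$.
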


\begin{lemma}[Scaling reduction for the anisotropic block]\label{lem:scaling-H1}
Let $e\ge 0$ and consider the scaled lattice $2^eH_1$.
Its quadratic form is
\[
  Q_e(x,y)=2^{e+1}(x^2+xy+y^2).
\]
For $m>e$ and $t\in\Ztwo$ set
\[
  A^{(e)}_m(t):=\#\{(x,y)\in(\Z/2^m\Z)^2: Q_e(x,y)\equiv t\pmod{2^m}\}.
\]
If $v_2(t)<e$ then $A^{(e)}_m(t)=0$.
If $v_2(t)\ge e$, writing $t=2^e t_e$ one has
\[
  A^{(e)}_m(t)=2^{2e}\,A_{m-e}(t_e)
\]
where $A_{\bullet}(\cdot)$ is the unscaled count from Lemma~\ref{lem:H1}.
\end{lemma}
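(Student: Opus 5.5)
The statement to prove is Lemma~\ref{lem:scaling-H1}. I would follow exactly the pattern of Lemma~\ref{lem:scaling-hyp}, with the quadratic form $2(x^2+xy+y^2)$ in place of $2xy$. Nothing about the specific form is used beyond integrality and polynomial dependence on $(x,y)$, so the argument is a formal scaling reduction.

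\textbf{Vanishing case.} Suppose $v_2(t)<e$. For every $(x,y)$ the value $Q_e(x,y)=2^{e+1}(x^2+xy+y^2)$ is divisible by $2^{e+1}$, and hence by $2^e$. Since $m>e$, the congruence $Q_e(x,y)\equiv t\pmod{2^m}$ implies $t\equiv Q_e(x,y)\equiv 0\pmod{2^e}$. This forces $v_2(t)\ge e$, so no solutions exist and $A^{(e)}_m(t)=0$.

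\textbf{Case $v_2(t)\ge e$.} Write $t=2^e t_e$. Because $2^e$ divides both sides and $m>e$, the congruence
\[
  2^{e}\cdot 2(x^2+xy+y^2)\equiv 2^e t_e\pmod{2^m}
\]
is equivalent to
\[
  2(x^2+xy+y^2)\equiv t_e\pmod{2^{m-e}}.
\]
The reduced condition depends only on $(x,y)$ modulo $2^{m-e}$. The reduction map $(\Z/2^m\Z)^2\to(\Z/2^{m-e}\Z)^2$ is surjective with fibres of size $2^{2e}$, and every element of a fibre over a solution is again a solution. Therefore
\[
  A^{(e)}_m(t)=2^{2e}\,A_{m-e}(t_e),
\]
with $A_{m-e}$ the unscaled count of Lemma~\ref{lem:H1}. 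The hypothesis $m>e$ guarantees $m-e\ge 1$, so $A_{m-e}$ is defined.

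\textbf{Expected obstacle.} There is no genuine difficulty. The only point needing care is the equivalence of the two congruences when cancelling the factor $2^e$; it is valid precisely because $2^e$ divides both sides and the modulus.
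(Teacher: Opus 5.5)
Your proof is correct and matches the paper's argument: divisibility of $Q_e$ by $2^e$ rules out $v_2(t)<e$, and otherwise cancelling $2^e$ reduces the congruence to $2(x^2+xy+y^2)\equiv t_e \pmod{2^{m-e}}$, where each solution has exactly $2^{2e}$ lifts modulo $2^m$. You also state explicitly that the cancellation is an equivalence because $m>e$, which the paper leaves implicit.
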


\begin{proof}
If $v_2(t)<e$ then $Q_e(x,y)$ is divisible by $2^e$ for all $(x,y)$ and no solutions exist.
If $t=2^e t_e$, reducing the congruence modulo $2^{m-e}$ yields
$2(x^2+xy+y^2)\equiv t_e\pmod{2^{m-e}}$, i.e.\ an instance of Lemma~\ref{lem:H1}.
Each solution modulo $2^{m-e}$ lifts to exactly $2^{2e}$ solutions modulo $2^m$ by independently adding multiples of $2^{m-e}$ to $x$ and $y$.
\end{proof}

\begin{proof}[Proof of Lemma~\ref{lem:H1}]
\noindent\emph{Strategy.} We rewrite $x^2+xy+y^2$ as the norm from $\mathcal O=\Ztwo[\omega]$; we use that $v_2(N(z))$ is always even and that the norm map on units has uniform lifting, and we split according to $v_2(t)$.

Let $\omega$ be a root of $\omega^2+\omega+1=0$ and set $\mathcal O:=\Ztwo[\omega]$, the ring of integers of the unramified quadratic extension $\Qtwo(\omega)/\Qtwo$.
The norm satisfies $N(x+y\omega)=x^2+xy+y^2$.
The congruence $2(x^2+xy+y^2)\equiv t\pmod{2^m}$ is equivalent to
\begin{equation}\label{eq:norm-cong}
  N(z)\equiv t/2\pmod{2^{m-1}},\qquad z\in\mathcal O/2^m\mathcal O.
\end{equation}

Because $N(z)\bmod 2^{m-1}$ depends only on $z\bmod 2^{m-1}$, each solution modulo $2^{m-1}$ has exactly $4$ lifts modulo $2^m$.
Thus the problem reduces to understanding which residue classes occur as $N(z)\bmod 2^{m-1}$, and with what multiplicity.

Every $z\in\mathcal O$ factors uniquely as $z=2^k u$ with $u\in\mathcal O^\times$ and $k\ge 0$.
Since $N(u)\in\Ztwo^\times$ for $u\in\mathcal O^\times$, we have $v_2(N(u))=0$, hence
\[
  v_2(N(z))=2k
\]
is always \emph{even}.

\smallskip
\noindent\emph{(a).}
Assume $t\ne 0$ and $v_2(t)=v$ is even.
Then $v_2(t/2)=v-1$ is odd.
If $m>v$, then $m-1>v_2(t/2)$ and any solution to \eqref{eq:norm-cong} would satisfy
\[
  v_2\bigl(N(z)\bigr)=v_2(t/2)=v-1,
\]
since two $2$-adic integers congruent modulo $2^{m-1}$ have the same valuation whenever that valuation is $<m-1$.
This is impossible because $v_2(N(z))$ is even.
Hence $A_m(t)=0$ for all $m>v$, as claimed.

\smallskip
\noindent\emph{(b).}
Assume $t\ne 0$ with $v_2(t)=2k+1$ odd.
Write $t/2=2^{2k}u_0$ with $u_0$ odd, and set $z=2^k w$.
Then \eqref{eq:norm-cong} is equivalent to
\[
  N(w)\equiv u_0\pmod{2^{m-1-2k}}.
\]
Set $M:=m-1-2k\ge 1$.
Since $u_0$ is odd, the congruence $N(w)\equiv u_0\pmod{2^M}$ forces $w\in(\mathcal O/2^M\mathcal O)^\times$.

Modulo $2$ one has $\mathcal O/2\mathcal O\simeq \mathbb{F}_4$, so $(\mathcal O/2\mathcal O)^\times$ has $3$ elements and its norm map to $(\Z/2\Z)^\times=\{1\}$ is trivial.
In particular, $N(w)\equiv 1\pmod{2}$ holds for exactly $3$ unit classes.

\smallskip
\noindent\emph{Unit lifting.}
For $M\ge 1$, any unit solution modulo $2^M$ lifts to exactly two unit solutions modulo $2^{M+1}$.
Indeed, write a unit $w\equiv x+y\omega\pmod{2^M}$ with $x,y\in\Z/2^M\Z$, and consider lifts
\[
  (x',y')=(x+2^M a,\,y+2^M b),\qquad a,b\in\{0,1\}.
\]
A binomial expansion gives
\[
  N(x',y')\equiv N(x,y) + 2^M\bigl((2x+y)a+(x+2y)b\bigr)\pmod{2^{M+1}},
\]
so if $N(x,y)\equiv u_0\pmod{2^M}$ we may write
\[
  N(x,y)=u_0+2^M\varepsilon\qquad (\varepsilon\in\{0,1\}\text{ determined modulo }2).
\]
The lift $(x',y')$ satisfies $N(x',y')\equiv u_0\pmod{2^{M+1}}$ if and only if
\[
  (2x+y)a+(x+2y)b\equiv -\varepsilon\pmod{2}.
\]
Reducing coefficients modulo $2$ gives the equivalent linear condition
\[
  ya+xb\equiv \varepsilon\pmod{2}.
\]
Conceptually, this is a Hensel-type lifting statement for the norm map on $\mathcal O^\times$.
Since $w$ is a unit, $(x,y)$ are not both even, so this linear equation has exactly two solutions $(a,b)\in(\Z/2\Z)^2$.
Iterating, the number of unit solutions modulo $2^M$ is therefore
\[
  3\cdot 2^{M-1}.
\]

Finally, we return from unit solutions modulo $2^M$ to solutions modulo $2^{m-k}$.
Since $z$ is taken modulo $2^m\mathcal O$, the variable $w=z/2^k$ ranges over $\mathcal O/2^{m-k}\mathcal O$.
The congruence $N(w)\equiv u_0\pmod{2^M}$ depends only on the reduction of $w$ modulo $2^M$.

Note that
\[
  m-k=(m-1-2k)+(k+1)=M+(k+1),
\]
so the reduction map $\mathcal O/2^{m-k}\mathcal O\twoheadrightarrow \mathcal O/2^M\mathcal O$ has kernel
$2^M\mathcal O/2^{m-k}\mathcal O$ of size
\[
  \#(2^M\mathcal O/2^{m-k}\mathcal O)=2^{2((m-k)-M)}=2^{2(k+1)}=4^{k+1}
\]
(using that $\mathcal O$ is free of rank~$2$ over $\Ztwo$).
Therefore each unit solution modulo $2^M$ lifts to exactly $4^{k+1}$ solutions modulo $2^{m-k}$.
Multiplying with the count $3\cdot 2^{M-1}$ gives
\[
  \#\{w\bmod 2^{m-k}: N(w)\equiv u_0\ (2^M)\}=(3\cdot 2^{M-1})\cdot 4^{k+1}=3\cdot 2^m.
\]
By \eqref{eq:norm-cong} this is exactly $A_m(t)$, proving~(b).

\smallskip
\noindent\emph{(c).}
If $t\equiv 0\pmod{2^m}$, then \eqref{eq:norm-cong} becomes $N(z)\equiv 0\pmod{2^{m-1}}$.
Write $z=2^r u$ with $u\in\mathcal O^\times$.
Then $v_2(N(z))=2r$, so the condition $N(z)\equiv 0\pmod{2^{m-1}}$ is equivalent to $2r\ge m-1$, i.e.
\[
  r\ge \left\lceil\frac{m-1}{2}\right\rceil.
\]
Thus $z\equiv 0\pmod{2^{\lceil (m-1)/2\rceil}}$.
The number of such residue classes modulo $2^m\mathcal O$ is
\[
  \#\bigl(2^{\lceil (m-1)/2\rceil}\mathcal O/2^m\mathcal O\bigr)
  =2^{2(m-\lceil (m-1)/2\rceil)}
  =4^{\lceil m/2\rceil},
\]
which is~(c).
\end{proof}

\begin{remark}[On generating series for $H_1$]\label{rem:H1-generating-series}
Fix $t\in\Z$.
If $t\ne 0$, parts~(a)--(b) of Lemma~\ref{lem:H1} show that the sequence $A_m(t)$ stabilises once $m>v_2(t)$.
Consequently the generating series $\sum_{m\ge 1}A_m(t)X^m$ is a finite initial segment plus a geometric tail, hence a rational function of~$X$.
For the singular target $t=0$, part~(c) gives $A_m(0)=4^{\lceil m/2\rceil}$, and one may sum explicitly
\[
  \sum_{m\ge 1}4^{\lceil m/2\rceil}X^m
  =\sum_{k\ge 0}4^{k+1}(X^{2k+1}+X^{2k+2})
  =\frac{4X(1+X)}{1-4X^2}.
\]
We do not emphasise these series further, since later sections use only the stable-range behaviour that feeds into the density.
\end{remark}

\begin{remark}\label{rem:H1-not-hensel}
Lemma~\ref{lem:H1} does \emph{not} contradict the general failure of the naive Jacobian/Hensel criterion for bilinear quadratic forms at $p=2$:
we do not apply Hensel lifting to the quadratic form $2(x^2+xy+y^2)$.
Instead we divide out the uniform factor of $2$ and identify $x^2+xy+y^2$ with a \emph{norm form} on an unramified extension, where the norm map is nondegenerate on units.
The genuinely degenerate case is the ternary block $L_3$, treated next.
\end{remark}


%% file: sections/05_half_lift_and_L3.tex
\section{The half-lift principle and the anisotropic ternary block}
\label{sec:half-lift}

This section contains the main dyadic lifting mechanism used later in the density computations of \S\ref{sec:densities}.
We first prove the half-lift principle (Theorem~\ref{thm:half-lift-principle}) for diagonal sums of squares.
We then specialise to the ternary lattice $L_3$ and combine half-lift with a $4$-adic descent and a base computation modulo $8$ to obtain closed formulas in the stable range.

\subsection{A general half-lift principle}

Theorem~\ref{thm:half-lift-principle} is a dyadic analogue of the usual Hensel growth factor for smooth hypersurfaces.
Fix $d\ge 1$ and consider the quadratic form $Q_d(x)=\sum_i x_i^2$.
For ``primitive'' targets $a$ with $4\nmid a$ and for $n\ge 3$, the theorem asserts that the number of solutions to
\[
  Q_d(x)\equiv a\pmod{2^n}
\]
grows by the factor $2^{d-1}$ when passing from $n$ to $n+1$.
The argument is combinatorial: we pair solutions by translation by $2^{n-1}$ in a suitably chosen odd coordinate, and the translation toggles the congruence modulo~$2^{n+1}$.

We begin with a simple fibre-invariance lemma.

\begin{lemma}[Fibre invariance for sums of squares]\label{lem:fibre-invariance}
Let $d\ge 1$ and $n\ge 1$.
For any $x\in(\Z/2^n\Z)^d$ and any $z\in(\Z/2\Z)^d$ one has
\[
  Q_d(x+2^n z)\equiv Q_d(x)\pmod{2^{n+1}}.
\]
Consequently, for any $a\in\Z$ and any residue class $\bar x\in(\Z/2^n\Z)^d$, the number of lifts $\tilde x\in(\Z/2^{n+1}\Z)^d$ of $\bar x$ satisfying
$Q_d(\tilde x)\equiv a\pmod{2^{n+1}}$
is either $0$ or the full fibre size $2^d$.
\end{lemma}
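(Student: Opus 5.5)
The plan is to expand each square coordinatewise and read off the congruence directly. First I fix integer representatives $x_i$ of the coordinates of $x$ and $z_i\in\{0,1\}$ of the coordinates of $z$. For each $i$,
\[
  (x_i+2^n z_i)^2 = x_i^2 + 2^{n+1}x_iz_i + 2^{2n}z_i^2 .
\]
The middle term is divisible by $2^{n+1}$. The last term is divisible by $2^{2n}$, and $2n\ge n+1$ because $n\ge 1$. Summing over $i=1,\dots,d$ then gives $Q_d(x+2^nz)\equiv Q_d(x)\pmod{2^{n+1}}$.

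I also need to check that the statement is well posed, since $x$ is only a class modulo $2^n$. Replacing $x_i$ by $x_i+2^n c_i$ with $c_i\in\Z$ changes $x_i^2$ by $2^{n+1}x_ic_i+2^{2n}c_i^2$. By the same estimate this is $\equiv 0\pmod{2^{n+1}}$. So $Q_d(x)\bmod 2^{n+1}$ is a well-defined function on $(\Z/2^n\Z)^d$, and the displayed congruence is really the same computation.

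For the consequence, I note that the $2^d$ lifts of $\bar x$ to $(\Z/2^{n+1}\Z)^d$ are exactly the classes $\tilde x_0+2^nz$ with $z\in(\Z/2\Z)^d$, where $\tilde x_0$ is any fixed lift. By the first part, all of these lifts have the same value of $Q_d$ modulo $2^{n+1}$. So either every lift satisfies $Q_d\equiv a\pmod{2^{n+1}}$, or none does. The count of solutions in the fibre is therefore $0$ or $2^d$.

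I do not expect a real obstacle here. The only point needing care is the inequality $2n\ge n+1$, which is exactly where $n\ge1$ enters.
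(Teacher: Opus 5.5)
Your proof is correct and follows the paper's argument essentially verbatim: you expand each $(x_i+2^nz_i)^2$, use $2^{n+1}\mid 2^{n+1}x_iz_i$ and $2^{n+1}\mid 2^{2n}$ for $n\ge 1$, and deduce the $0$-or-$2^d$ dichotomy from the fact that the fibre consists of the $2^d$ translates $\tilde x_0+2^nz$. Your extra check that $Q_d \bmod 2^{n+1}$ is well defined on $(\Z/2^n\Z)^d$ is a useful clarification. The paper leaves it implicit here and uses it tacitly in the proof of Theorem~\ref{thm:half-lift-principle}.
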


\begin{proof}
Write $\tilde x=x+2^n z$.
Then
\[
  Q_d(\tilde x)=\sum_i(x_i+2^n z_i)^2
  =\sum_i x_i^2 + 2^{n+1}\sum_i x_i z_i + 2^{2n}\sum_i z_i^2.
\]
Both error terms are divisible by $2^{n+1}$ (since $2^{n+1}\mid 2^{n+1}x_i z_i$ and $2^{n+1}\mid 2^{2n}$ for $n\ge 1$), proving the congruence.
The final assertion follows because the reduction map $(\Z/2^{n+1}\Z)^d\to(\Z/2^n\Z)^d$ has fibres of size $2^d$.
\end{proof}

\begin{proof}[Proof of Theorem~\ref{thm:half-lift-principle}]
\noindent\emph{Strategy.} We compare solutions modulo $2^n$ and $2^{n+1}$ using translations by $2^{n-1}$-torsion: for $4\nmid a$ every solution has an odd coordinate, and the binomial expansion shows that these translations pair solutions so that exactly one element in each orbit lifts.

Fix $d\ge 1$, $n\ge 3$, and $a$ with $4\nmid a$.
On $(\Z/2^n\Z)^d$ set $f(x):=Q_d(x)-a$.
For $u\in(\Z/2\Z)^d$ and $x\in(\Z/2^n\Z)^d$ define the translation
\[
  \tau_u(x):=x+2^{n-1}u\in(\Z/2^n\Z)^d.
\]
A binomial expansion gives, for $n\ge 3$,

\begin{equation}\label{eq:expansion-d}
  f(\tau_u(x))\equiv f(x)\pmod{2^n},\qquad
  f(\tau_u(x))\equiv f(x)+2^n(x\cdot u)\pmod{2^{n+1}},
\end{equation}
where $x\cdot u:=\sum_i x_i u_i$ and the quadratic term $2^{2(n-1)}\sum u_i^2$ vanishes modulo $2^{n+1}$ because $2(n-1)\ge n+1$.

Let
$S_n(a):=\{x\in(\Z/2^n\Z)^d: f(x)\equiv 0\pmod{2^n}\}$,
so that $\#S_n(a)=N_{d,n}(a)$.
By Lemma~\ref{lem:fibre-invariance}, for each $\bar x\in(\Z/2^n\Z)^d$ the congruence condition modulo $2^{n+1}$ is constant across the entire fibre of its lifts.
In particular, if we view $f(\bar x)\bmod 2^{n+1}$ as the residue obtained by evaluating $f$ on any lift of $\bar x$ to $(\Z/2^{n+1}\Z)^d$ (well-defined by the lemma), then
\[
  N_{d,n+1}(a)=2^d\cdot\#\{\bar x\in S_n(a): f(\bar x)\equiv 0\pmod{2^{n+1}}\}.
\]
We show the subset on the right has size exactly $\tfrac12\#S_n(a)$.

\smallskip
\noindent\emph{Step 1: every solution has an odd coordinate.}
If all coordinates of $x$ are even then $Q_d(x)\equiv 0\pmod{4}$.
Since $4\nmid a$, no such $x$ lies in $S_n(a)$.
Thus every $x\in S_n(a)$ has at least one odd coordinate.

\smallskip
\noindent\emph{Step 2: partition by the minimal odd coordinate.}
For $1\le i\le d$ let
\[
  S_n^{(i)}(a):=\{x\in S_n(a): x_1,\dots,x_{i-1}\text{ are even and }x_i\text{ is odd}\}.
\]
Then $S_n(a)$ is the disjoint union of the subsets $S_n^{(i)}(a)$.
Moreover each $S_n^{(i)}(a)$ is stable under the involution $\tau_{e_i}$, where $e_i$ is the $i$th standard basis vector:
adding $2^{n-1}$ to an odd coordinate preserves oddness, and it does not affect earlier coordinates.

\smallskip
\noindent\emph{Step 3: half of each subset lifts.}
Fix $i$ and take $u=e_i$.
For $x\in S_n^{(i)}(a)$ one has $x\cdot u=x_i\equiv 1\pmod{2}$.
By \eqref{eq:expansion-d},
$f(\tau_u(x))\equiv f(x)+2^n\pmod{2^{n+1}}$.
Therefore exactly one of the pair $\{x,\tau_u(x)\}$ satisfies $f\equiv 0\pmod{2^{n+1}}$.
Thus each orbit $\{x,\tau_u(x)\}$ contributes exactly one element lifting to level $2^{n+1}$.
As $\tau_u$ acts freely on $S_n^{(i)}(a)$, exactly half of $S_n^{(i)}(a)$ lifts to level $2^{n+1}$.
Summing over $i$ yields
$\#\{x\in S_n(a): f(x)\equiv 0\pmod{2^{n+1}}\}=\tfrac12\#S_n(a)$.

Combining with the $2^d$-to-$1$ fibre size gives
$N_{d,n+1}(a)=2^d\cdot\tfrac12\,N_{d,n}(a)=2^{d-1}N_{d,n}(a)$.
\end{proof}

\begin{remark}\label{rem:half-lift-hensel}
For $d=3$ one can improve Step~2 by choosing a single involution direction on each parity class (as in the original ``three-squares'' half-lift lemma).
The minimal-index partition above is slightly less symmetric but works uniformly for all $d\ge 1$.
In particular, the growth factor $2^{d-1}$ coincides with the ``smooth'' Hensel factor $p^{d-1}$ specialised to $p=2$, even though the Jacobian of the bilinear quadratic form vanishes modulo $2$.
\end{remark}

\subsection{Specialisation to the ternary block $L_3$}

Let $L_3=\langle 2\rangle^{\oplus 3}$ as in \eqref{eq:L3}.
We combine the half-lift principle (which applies when $4\nmid a$) with a separate $4$-adic descent for imprimitive targets and an initial count modulo~$8$.
For $n\ge 0$ and $a\in\Z$ set
\[
  N_n(a):=N_{3,n}(a)=\#\{x\in(\Z/2^n\Z)^3: x_1^2+x_2^2+x_3^2\equiv a\pmod{2^n}\}.
\]
For $n=0$ we interpret $\Z/2^0\Z=\Z/1\Z$, so $N_0(a)=1$ for all $a$.

\begin{lemma}[Reduction to three squares]\label{lem:reduction}
For $m\ge 1$ and $t\in\Z$,
\[
  r_m(t;L_3)=
  \begin{cases}
    0, & t\text{ odd},\\
    8\,N_{m-1}(t/2), & t\text{ even}.
  \end{cases}
\]
\end{lemma}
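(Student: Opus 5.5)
This follows directly from the dictionary of Lemma~\ref{lem:q-dictionary}, so the plan is to verify its hypotheses for $L_3$ and translate its conclusion. The lattice $L_3=\langle 2\rangle^{\oplus 3}$ has rank $n=3$ and quadratic form $Q(x)=2(x_1^2+x_2^2+x_3^2)$. It is therefore even, and $q=Q/2$ is exactly the three-squares form $Q_3$. The odd case is immediate: if $t$ is odd, Lemma~\ref{lem:q-dictionary}(i) gives $r_m(t;L_3)=0$, since $Q$ takes only even values.

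For $t=2t'$ even and $m\ge 2$, Lemma~\ref{lem:q-dictionary}(ii) gives
\[
r_m(t;L_3)=2^3\,r^{(q)}_{m-1}(t';L_3).
\]
By definition,
\[
r^{(q)}_{m-1}(t';L_3)=\#\{x\in(\Z/2^{m-1}\Z)^3 : x_1^2+x_2^2+x_3^2\equiv t'\pmod{2^{m-1}}\}=N_{m-1}(t/2).
\]
This yields $r_m(t;L_3)=8N_{m-1}(t/2)$.

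The only point needing separate care is the boundary case $m=1$. There Lemma~\ref{lem:q-dictionary}(ii) states $r_1(2t';L)=2^n=8$, and the convention $N_0(a)=1$ gives $8N_0(t/2)=8$, so the formula holds uniformly for all $m\ge 1$.

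If one prefers not to cite the lemma, the argument can be repeated directly. For $m\ge 2$, the congruence $2Q_3(x)\equiv 2t'\pmod{2^m}$ is equivalent to $Q_3(x)\equiv t'\pmod{2^{m-1}}$. This condition depends only on $x\bmod 2^{m-1}$, and each class has $2^3$ lifts modulo $2^m$.

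No step is a real obstacle. The only care needed is in matching conventions: $L_3/2^mL_3$ must be identified with $(\Z/2^m\Z)^3$ via the standard basis, and the $m=1$ case must use $N_0=1$.
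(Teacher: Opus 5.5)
Your proof is correct and is essentially the paper's argument. The paper argues directly: $Q$ is even, $2\sum x_i^2\equiv 2s\pmod{2^m}$ is equivalent to $\sum x_i^2\equiv s\pmod{2^{m-1}}$, and each class modulo $2^{m-1}$ has $2^3$ lifts; this is exactly Lemma~\ref{lem:q-dictionary} specialised to $L_3$, and your separate handling of $m=1$ via $N_0=1$ matches the convention the paper uses implicitly.
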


\begin{proof}
If $t$ is odd there are no solutions because $Q(x)=2(\cdots)$ is even.
If $t=2s$, then $2(\sum x_i^2)\equiv 2s\pmod{2^m}$ is equivalent to $\sum x_i^2\equiv s\pmod{2^{m-1}}$.
Reducing modulo $2^{m-1}$ forgets the top bit in each coordinate, so each solution modulo $2^{m-1}$ has $2^3=8$ lifts modulo $2^m$.
\end{proof}

\subsubsection*{A $4$-adic descent}

\begin{lemma}[$4$-adic descent for three squares]\label{lem:4adic-descent}
Let $n\ge 3$ and $a\in\Z$ with $4\mid a$.
Then every solution to $x_1^2+x_2^2+x_3^2\equiv a\pmod{2^n}$ has $x_1,x_2,x_3$ even, and
\[
  N_n(a)=8\,N_{n-2}(a/4).
\]
\end{lemma}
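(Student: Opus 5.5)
The plan has two steps: a parity argument modulo $8$ to show that every coordinate is even, and then a count of the resulting halved solutions together with their lifts.

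\emph{Step 1 (all coordinates even).} Since $n\ge 3$, any solution also satisfies $x_1^2+x_2^2+x_3^2\equiv a\pmod 8$, and $a\equiv 0$ or $4\pmod 8$ because $4\mid a$. Odd squares are $\equiv 1\pmod 8$ and even squares are $\equiv 0$ or $4\pmod 8$. Let $j$ be the number of odd coordinates. Then the sum is $\equiv j\pmod 4$. The condition $4\mid a$ forces $j\equiv 0\pmod 4$, and since $0\le j\le 3$ we get $j=0$. This is where $n\ge 3$ is used; in fact only the modulus $4$ is needed here.

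\emph{Step 2 (counting).} Write $x_i=2y_i$, and consider the map $(\Z/2^n\Z)^3\supset(2\Z/2^n\Z)^3\to(\Z/2^{n-1}\Z)^3$ given by $x\mapsto y=x/2$. This is a bijection onto $(\Z/2^{n-1}\Z)^3$. The congruence $4\sum y_i^2\equiv a\pmod{2^n}$ is equivalent to $\sum y_i^2\equiv a/4\pmod{2^{n-2}}$. This condition depends only on $y\bmod 2^{n-2}$. The reduction $(\Z/2^{n-1}\Z)^3\to(\Z/2^{n-2}\Z)^3$ has fibres of size $2^3=8$. Hence
\[
N_n(a)=8\,N_{n-2}(a/4).
\]
When $n=3$ the right-hand side is $8\,N_1(a/4)$, which is consistent with the convention $N_0=1$. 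No convention is actually needed, since $n-2\ge 1$.

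\emph{Main obstacle.} The only delicate point is the bookkeeping of moduli: $y$ lives modulo $2^{n-1}$, not $2^{n-2}$. This is where the factor $8$ comes from, in the same way as in Lemma~\ref{lem:reduction}. Otherwise the argument is routine.
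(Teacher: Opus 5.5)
Your proposal is correct and follows essentially the same route as the paper. A parity argument modulo $4$ forces all three coordinates to be even; you phrase it via reduction modulo $8$, but as you note only the modulus $4$ matters. Then the halving bijection $2\Z/2^n\Z\cong\Z/2^{n-1}\Z$, combined with the $8$-to-$1$ reduction from modulus $2^{n-1}$ to $2^{n-2}$, yields $N_n(a)=8\,N_{n-2}(a/4)$.
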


\begin{proof}
Reducing modulo $4$ gives $x_1^2+x_2^2+x_3^2\equiv 0\pmod{4}$.
Since an odd square is $1\pmod{4}$ and there are only three variables, this forces each $x_i$ to be even.

Write $x_i=2y_i$.
The map
\[
  (y_1,y_2,y_3)\bmod 2^{n-1}\longmapsto (2y_1,2y_2,2y_3)\bmod 2^n
\]
is a bijection onto the subset of triples with all coordinates even.

Substituting $x_i=2y_i$ turns the congruence into
$4(y_1^2+y_2^2+y_3^2)\equiv a\pmod{2^n}$, hence
\[
  y_1^2+y_2^2+y_3^2\equiv a/4\pmod{2^{n-2}}.
\]
The right-hand congruence depends only on $(y_1,y_2,y_3)\bmod 2^{n-2}$.
Each solution modulo $2^{n-2}$ has exactly $2^3=8$ lifts modulo $2^{n-1}$ (choose the top bit in each coordinate), and multiplying by~$2$ gives a unique solution modulo~$2^n$.
\end{proof}

\begin{remark}[Why the $4$-adic descent is special to $d=3$]\label{rem:4adic-descent-special}
Lemma~\ref{lem:4adic-descent} uses the fact that with only three variables,
$x_1^2+x_2^2+x_3^2\equiv 0\pmod 4$ forces each $x_i$ to be even.
For $d\ge 4$ this fails (for example, $1^2+1^2+1^2+1^2\equiv 0\pmod 4$),
so no analogous forced-evenness descent holds in higher dimension.
This is the reason the closed three-squares formulas below involve a genuine $4$-adic descent step rather than a uniform argument in~$d$.
\end{remark}

\subsubsection*{Half-lift and base computation}

\begin{corollary}[Half-lift for primitive three-squares targets]\label{cor:half-lift-3}
Assume $n\ge 3$ and $4\nmid a$.
Then $N_{n+1}(a)=4\,N_n(a)$.
\end{corollary}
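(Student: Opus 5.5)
This is the case $d=3$ of Theorem~\ref{thm:half-lift-principle}. Since $N_n(a)=N_{3,n}(a)$ by definition and $2^{d-1}=4$ when $d=3$, the theorem gives $N_{n+1}(a)=4N_n(a)$ directly for $n\ge 3$ and $4\nmid a$. So the plan is just to check that the notation matches and to cite the theorem.

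For completeness, and to make the mechanism visible in the ternary setting, I would also sketch the argument specialised to $d=3$. It has three ingredients.
\begin{altenumerate}
\item By Lemma~\ref{lem:fibre-invariance}, each class modulo $2^n$ either has all $8$ of its lifts modulo $2^{n+1}$ as solutions, or none of them.
\item Every solution modulo $2^n$ has an odd coordinate, because $4\nmid a$ and a sum of three even squares is $\equiv 0\pmod 4$.
\item Partition the solutions according to the first odd coordinate $x_i$. The translation $x\mapsto x+2^{n-1}e_i$ is a free involution on each part. By \eqref{eq:expansion-d} it changes $Q_3(x)-a$ by exactly $2^n$ modulo $2^{n+1}$; this uses $n\ge 3$, so that $2^{2(n-1)}$ vanishes modulo $2^{n+1}$.
\end{altenumerate}
From (iii), exactly one element of each orbit lifts. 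Combining this with (i) gives $N_{n+1}(a)=8\cdot\tfrac12 N_n(a)$.

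There is no real obstacle here. The only point needing care is the range of $n$. The hypothesis $n\ge 3$ is exactly what the quadratic error term in \eqref{eq:expansion-d} requires, and it matches the hypothesis of Theorem~\ref{thm:half-lift-principle}, so the corollary follows without extra work.
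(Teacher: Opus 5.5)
Your proposal is correct and matches the paper: the corollary is proved there simply by applying Theorem~\ref{thm:half-lift-principle} with $d=3$, so that $2^{d-1}=4$. Your optional sketch also reproduces the paper's proof of that theorem: fibre invariance, an odd coordinate forced by $4\nmid a$, and the involution on the minimal odd coordinate, where $n\ge 3$ removes the quadratic term.
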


\begin{proof}
Apply Theorem~\ref{thm:half-lift-principle} with $d=3$.
\end{proof}

\begin{lemma}[Three-squares counts modulo $8$]\label{lem:mod8}
For $a\in\Z$, the value of $N_3(a)$ depends only on $a\bmod 8$ and is given by
\[
  N_3(a)=
  \begin{cases}
    32, & a\equiv 0,4\pmod{8},\\
    96, & a\equiv 1,2,5,6\pmod{8},\\
    64, & a\equiv 3\pmod{8},\\
    0,  & a\equiv 7\pmod{8}.
  \end{cases}
\]
\end{lemma}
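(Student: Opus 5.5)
The claim that $N_3(a)$ depends only on $a\bmod 8$ is immediate: the count is over $x\in(\Z/8\Z)^3$ and the condition is a congruence modulo $8$. The plan is to compute the distribution of $x_1^2+x_2^2+x_3^2 \bmod 8$ by convolving the single-variable square distribution. For one variable $x\in\Z/8\Z$ the squares modulo $8$ are
\[
  x^2\equiv 0\ (x\in\{0,4\}),\qquad x^2\equiv 4\ (x\in\{2,6\}),\qquad x^2\equiv 1\ (x\text{ odd}).
\]
So the one-variable value distribution is $c(0)=2$, $c(4)=2$, $c(1)=4$, with all other residues zero.

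Next, $N_3(a)=\sum_{s_1+s_2+s_3\equiv a} c(s_1)c(s_2)c(s_3)$, and I would organise this sum by the number $j\in\{0,1,2,3\}$ of odd coordinates. With $j$ odd coordinates, the odd part contributes $j\bmod 8$. The $3-j$ even coordinates each contribute $0$ or $4$ with equal weight $2$. Hence the even part contributes $0$ or $4$ modulo $8$, each with total weight $4^{3-j}/2$ when $3-j\ge1$, and contributes exactly $0$ with weight $1$ when $j=3$. The odd coordinates give a factor $\binom{3}{j}4^j$.

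The four cases then give the following residues and counts:
\begin{itemize}
\item $j=0$: residues $0,4$ each receive $32$.
\item $j=1$: residues $1,5$ each receive $3\cdot4\cdot 8=96$.
\item $j=2$: residues $2,6$ each receive $3\cdot16\cdot 2=96$.
\item $j=3$: residue $3$ receives $64$.
\end{itemize}
Residue $7$ receives nothing, which recovers the table. As a check, the total is $2\cdot32+4\cdot96+64=512=8^3$.

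There is no real obstacle here. The only care needed is the bookkeeping for the even part: the set $\{0,4\}$ is closed under addition modulo $8$ and the weights are balanced, so the even part splits evenly between $0$ and $4$ whenever at least one even coordinate is present.
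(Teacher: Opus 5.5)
Your proposal is correct and takes the same route as the paper, which also convolves the one-variable distribution of squares modulo $8$ (values $0,1,4$ with multiplicities $2,4,2$) over three variables. Your split by the number of odd coordinates just writes that convolution out explicitly, and your counts, including the check $2\cdot 32+4\cdot 96+64=512=8^3$, are right.
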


\begin{proof}
Squares modulo $8$ take values in $\{0,1,4\}$ with multiplicities $2,4,2$ respectively.
Convolving these distributions for three variables yields the stated table.
\end{proof}

\begin{proposition}[Closed form for $N_n(a)$]\label{prop:Nn-closed}
Let $a\in\Z$ be nonzero and write $a=4^k a_0$ with $k\ge 0$ and $4\nmid a_0$.
Then for all $n\ge 2k+3$,
\[
  N_n(a)=8^k\,N_3(a_0)\,4^{n-2k-3}.
\]
\end{proposition}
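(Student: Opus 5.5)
The proof goes by induction on $k$, using the $4$-adic descent (Lemma~\ref{lem:4adic-descent}) to remove factors of $4$ and the half-lift (Corollary~\ref{cor:half-lift-3}) to climb levels once the target is primitive.

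\emph{Base case $k=0$.} Here $4\nmid a$ and $a=a_0$. We show $N_n(a)=N_3(a)\,4^{n-3}$ for all $n\ge 3$ by induction on $n$. The case $n=3$ is a tautology. The step $n\to n+1$ is exactly Corollary~\ref{cor:half-lift-3}, which applies since $n\ge 3$ and $4\nmid a$. The value $N_3(a_0)$ depends only on $a_0\bmod 8$ and is given by Lemma~\ref{lem:mod8}; this identification is not needed for the identity itself.

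\emph{Inductive step.} Let $k\ge 1$ and $n\ge 2k+3$. Then $n\ge 5\ge 3$ and $4\mid a$, so Lemma~\ref{lem:4adic-descent} gives
\[
  N_n(a)=8\,N_{n-2}(a/4).
\]
Now $a/4=4^{k-1}a_0$ and $n-2\ge 2(k-1)+3$, so the induction hypothesis gives
\[
  N_{n-2}(a/4)=8^{k-1}N_3(a_0)\,4^{(n-2)-2(k-1)-3}=8^{k-1}N_3(a_0)\,4^{n-2k-3}.
\]
Multiplying by $8$ yields the claimed formula.

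\emph{Where care is needed.} The only point to check is the index bookkeeping. The descent lemma needs level $n\ge 3$ at each application, and the half-lift needs level $\ge 3$ for the primitive target $a_0$. The threshold $n\ge 2k+3$ is exactly what ensures that after $k$ descents the level is $n-2k\ge 3$, so that every application stays in range. Note also that the hypothesis $a\neq 0$ guarantees the decomposition $a=4^k a_0$ with finite $k$.
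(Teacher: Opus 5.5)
Your proof is correct and is essentially the paper's argument: the paper iterates the $4$-adic descent (Lemma~\ref{lem:4adic-descent}) $k$ times to get $N_n(a)=8^k N_{n-2k}(a_0)$ and then applies the half-lift (Corollary~\ref{cor:half-lift-3}) to the primitive target $a_0$. Your induction on $k$ (one descent in the inductive step, iterated half-lift in the base case) is the same argument written as an induction, and your index bookkeeping is right.
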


\begin{proof}
If $4\mid a$ and $n\ge 3$, Lemma~\ref{lem:4adic-descent} gives $N_n(a)=8\,N_{n-2}(a/4)$.
Iterating $k$ times yields $N_n(a)=8^k\,N_{n-2k}(a_0)$ for $n\ge 2k+3$.
Since $4\nmid a_0$, Corollary~\ref{cor:half-lift-3} gives
$N_{n-2k}(a_0)=N_3(a_0)\,4^{n-2k-3}$.
\end{proof}


%% file: sections/06_explicit_density_formulas.tex
\section{Explicit density formulas}
\label{sec:densities}

We now pass from the finite-level representation counts of the previous sections to the local representation densities \eqref{eq:density-def}.

Throughout we assume $t\ne 0$.
For $t=0$ the normalised sequence in \eqref{eq:density-def} need not converge; see Remark~\ref{rem:singular-targets}.

We retain the notation $r_m(t;L)$ and $\alpha_p(t;L)$ from \S\ref{sec:prelim}, and write $v_p$ for the $p$-adic valuation (so $v_2$ in the dyadic subsections).
We begin with the hyperbolic plane (all primes), then treat the dyadic Type~II blocks and the ternary block $L_3$, and finally summarise the formulas in Table~\ref{tab:block-densities}.

Recall that if $\mathrm{rank}(L)=n$, then the normalising factor in \eqref{eq:density-def} is $p^{-m(n-1)}$.
In particular, for rank $2$ lattices one normalises by $p^{-m}$, for rank $3$ by $p^{-2m}$, and in rank $1$ no normalisation occurs.

\begin{remark}[Singular target $t=0$]\label{rem:singular-targets}
We record two basic pathologies at $t=0$.
For the hyperbolic plane $H_0$, Lemma~\ref{lem:2xy-2}(iii) gives $r_m(0;H_0)=(m+1)2^m$, so $2^{-m}r_m(0;H_0)=m+1\to\infty$ and the density diverges.
For the anisotropic even plane $H_1$, Lemma~\ref{lem:H1}(c) gives $r_m(0;H_1)=4^{\lceil m/2\rceil}$, so $2^{-m}r_m(0;H_1)$ oscillates between $1$ and $2$ and the limit does not exist.
Similar phenomena occur for rank-one blocks.
Since the densities at nonzero targets are the quantities appearing in Siegel mass formulas and in many applications, we restrict to $t\ne 0$ throughout.
\end{remark}

\subsection{Hyperbolic plane (all primes)}

Let $p$ be a prime and let $H_0$ denote the rank-$2$ hyperbolic plane over $\Zp$ with quadratic form $Q(x,y)=2xy$.
For $t\ne 0$ one has
\[
  r_m(t;H_0)=M_{p,m}(t),
\]
where the count $M_{p,m}$ was defined in \S\ref{sec:hyperbolic} (and for $p=2$ we retain the shorthand $M_m=M_{2,m}$).

\begin{proposition}[Odd primes]\label{prop:density-hyp-odd}
Let $p$ be an odd prime and let $t\in\Zp\setminus\{0\}$.
Set $v:=v_p(t)$.
Then
\[
  \alpha_p(t;H_0)=(v+1)\,\frac{p-1}{p}.
\]
\end{proposition}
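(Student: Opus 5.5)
The plan is to read the density off directly from the explicit count in Lemma~\ref{lem:2xy-odd}, since $r_m(t;H_0)=M_{p,m}(t)$ and $H_0$ has rank $2$, so the normalising factor in \eqref{eq:density-def} is $p^{-m}$.

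First, fix $t\in\Zp\setminus\{0\}$ with $v=v_p(t)<\infty$. For every $m>v$, the residue class of $t$ modulo $p^m$ is nonzero, and its valuation in the sense of \S\ref{subsec:counts-densities} is exactly $v$. This holds because $t\equiv 0\pmod{p^v}$ but $t\not\equiv 0\pmod{p^{v+1}}$, and $v+1\le m$. Hence Lemma~\ref{lem:2xy-odd}(i) applies in the range $0\le v<m$, and gives
\[
  r_m(t;H_0)=M_{p,m}(t)=(v+1)(p-1)p^{m-1}\qquad (m>v).
\]

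Second, normalise. For all $m>v$,
\[
  p^{-m}\,r_m(t;H_0)=(v+1)(p-1)p^{-1}.
\]
This is independent of $m$, so the sequence is eventually constant and the limit exists. That limit is $(v+1)\frac{p-1}{p}$, as claimed.

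There is no real obstacle; the only point needing care is the first step. One must check that the valuation convention modulo $p^m$ (under which $v_p(0)=m$) agrees with the $p$-adic valuation of $t$ once $m>v$. Otherwise, for $m\le v$, one would land in case~(ii) of Lemma~\ref{lem:2xy-odd}, but these finitely many terms do not affect the limit.
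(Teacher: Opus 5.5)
Your proposal is correct and matches the paper's proof: for every $m>v$, apply Lemma~\ref{lem:2xy-odd}(i) to get $r_m(t;H_0)=(v+1)(p-1)p^{m-1}$, so $p^{-m}r_m(t;H_0)$ is eventually constant with value $(v+1)\frac{p-1}{p}$. The paper does not spell out your check that the mod-$p^m$ valuation convention agrees with $v_p(t)$ once $m>v$; that check is a harmless addition.
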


\begin{proof}
Fix $m>v$.
Lemma~\ref{lem:2xy-odd}(i) gives
\(r_m(t;H_0)=M_{p,m}(t)=(v+1)(p-1)p^{m-1}\), hence
\[
  p^{-m}r_m(t;H_0)=(v+1)\,\frac{p-1}{p}
\]
for all $m>v$.
This constant is $\alpha_p(t;H_0)$.
\end{proof}

\begin{corollary}[Scaled hyperbolic plane at odd primes]\label{cor:density-hyp-scaled-odd}
Let $p$ be an odd prime and let $e\ge 0$.
Let $L=p^eH_0$, so that $Q(x,y)=2p^e xy$.
Let $t\in\Zp\setminus\{0\}$ and set $v:=v_p(t)$.
If $v<e$ then $\alpha_p(t;L)=0$.
If $v\ge e$, then
\[
  \alpha_p(t;L)=(v-e+1)(p-1)p^{e-1}.
\]
\end{corollary}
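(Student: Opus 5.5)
The plan is to read the density directly off the finite-level counts, combining the scaling reduction of Lemma~\ref{lem:scaling-hyp} with the stable-range formula of Lemma~\ref{lem:2xy-odd}(i). Since $L=p^eH_0$ has rank $2$, we have $r_m(t;L)=M^{(e)}_{p,m}(t)$, and the normalising factor in \eqref{eq:density-def} is $p^{-m}$. The only choice to make is how large to take $m$ so that the counts are already constant after normalisation.

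\emph{Case $v<e$.} Lemma~\ref{lem:scaling-hyp} gives $M^{(e)}_{p,m}(t)=0$ for every $m$, because $2p^exy$ is always divisible by $p^e$ while $t$ is not. Hence $\alpha_p(t;L)=0$.

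\emph{Case $v\ge e$.} Write $t=p^e t_e$, so that $v_p(t_e)=v-e$.
\begin{altenumerate}
\item Fix $m>v$; in particular $m>e$. Lemma~\ref{lem:scaling-hyp} then gives $M^{(e)}_{p,m}(t)=p^{2e}M_{p,m-e}(t_e)$.
\item Since $m-e>v-e=v_p(t_e)$, the residue of $t_e$ modulo $p^{m-e}$ is nonzero with valuation $v-e$. Lemma~\ref{lem:2xy-odd}(i), applied at level $m-e$, yields $M_{p,m-e}(t_e)=(v-e+1)(p-1)p^{m-e-1}$.
\item Multiplying gives $r_m(t;L)=(v-e+1)(p-1)p^{m+e-1}$. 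After dividing by $p^m$, the sequence is constant equal to $(v-e+1)(p-1)p^{e-1}$ for all $m>v$, so the limit exists and has the stated value.
\end{altenumerate}

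The only point needing care is the bookkeeping in step (ii). One must check that $v_p(t_e)=v-e<m-e$, so that part (i) of Lemma~\ref{lem:2xy-odd} applies at level $m-e$ rather than the degenerate part (ii). One must also check that the condition $m>e$ required by Lemma~\ref{lem:scaling-hyp} is satisfied, which it is because $m>v\ge e$. Beyond this there is no real obstacle, since both ingredients are exact counts.
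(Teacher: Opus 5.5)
Your proof is correct and follows the paper's argument essentially line by line: you apply the scaling reduction of Lemma~\ref{lem:scaling-hyp} for $m>v$, then Lemma~\ref{lem:2xy-odd}(i) at level $m-e$, and observe that the normalised count is constant. One small quibble, which the paper's own wording shares: when $v<e$ the count vanishes only for $m>v$ (for $m\le v$ every pair is a solution), but since only large $m$ enter the limit this does not affect the argument.
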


\begin{proof}
If $v<e$ then $t$ is not divisible by $p^e$ while $Q(L)\subseteq p^e\Zp$, so there are no solutions at any level.
Assume $v\ge e$.
Fix $m>\max\{e,v\}$.
Lemma~\ref{lem:scaling-hyp} gives
\(r_m(t;L)=p^{2e}M_{p,m-e}(t/p^e)\).
Since $m-e>v_p(t/p^e)=v-e$, Lemma~\ref{lem:2xy-odd}(i) yields
\(M_{p,m-e}(t/p^e)=(v-e+1)(p-1)p^{m-e-1}\).
Thus for every $m>\max\{e,v\}$ one has
\[
  p^{-m}r_m(t;L)=(v-e+1)(p-1)p^{e-1},
\]
and the density is this constant.
\end{proof}

\begin{proposition}\label{prop:density-hyp}
Let $t\in\Ztwo\setminus\{0\}$ and set $v:=v_2(t)$.
Then $\alpha_2(t;H_0)=v$ if $v\ge 1$, and $\alpha_2(t;H_0)=0$ if $v=0$.
\end{proposition}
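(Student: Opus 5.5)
The plan is to argue exactly as in Proposition~\ref{prop:density-hyp-odd}, now using the dyadic count Lemma~\ref{lem:2xy-2} in place of Lemma~\ref{lem:2xy-odd}. The density is defined in \eqref{eq:density-def}, and for the rank-$2$ lattice $H_0$ the normalising factor is $2^{-m}$. As in the odd case, $r_m(t;H_0)=M_m(t)$, where $M_m(t)$ denotes $M_m$ evaluated at the residue class of $t$ modulo $2^m$. Since $t\neq 0$, its valuation $v:=v_2(t)$ is finite.

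\emph{Case $v=0$.} Here $t$ is odd. Its residue class modulo $2^m$ is odd for every $m\ge 1$, so Lemma~\ref{lem:2xy-2}(i) gives $M_m(t)=0$ for all $m$. Hence the normalised sequence is identically $0$ and $\alpha_2(t;H_0)=0$. This also follows directly from Lemma~\ref{lem:q-dictionary}(i), because $H_0$ is even.

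\emph{Case $v\ge 1$.} Fix $m>v$. The residue class of $t$ modulo $2^m$ then has valuation exactly $v$, in the truncated sense of \S\ref{sec:prelim}, and $1\le v<m$. Lemma~\ref{lem:2xy-2}(ii) therefore gives $r_m(t;H_0)=M_m(t)=v\cdot 2^m$. It follows that $2^{-m}r_m(t;H_0)=v$ for every $m>v$. The sequence is eventually constant, so the limit exists and equals $v$.

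There is no real obstacle here. The only point needing care is that the truncated valuation of $t \bmod 2^m$ agrees with $v_2(t)$ once $m>v$, which is what allows us to use part~(ii) of the lemma rather than part~(iii).

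As a cross-check, Lemma~\ref{lem:q-dictionary} should give $\alpha_2(2t';H_0)=2\alpha_2^{(q)}(t';H_0)$. The $q$-normalised density of $H_0$ at $t'$ is $(v_2(t')+1)\cdot\tfrac12$, which is the $e=0$ case of Theorem~\ref{thm:prime-uniform-h0}. Doubling this gives $v_2(t')+1=v$, in agreement with the value just computed.
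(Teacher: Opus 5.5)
Your proof is correct and follows the paper's argument. The case $v=0$ is dispatched by parity (equivalently Lemma~\ref{lem:2xy-2}(i)), and for $m>v$ Lemma~\ref{lem:2xy-2}(ii) gives $r_m(t;H_0)=v\cdot 2^m$, so the normalised sequence is eventually constant equal to $v$. Your closing cross-check is consistent but not independent: the paper derives Theorem~\ref{thm:prime-uniform-h0} at $p=2$ from this same count via Corollary~\ref{cor:density-hyp-scaled}.
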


\begin{proof}
If $v=0$ then $t$ is odd, while $Q(x,y)=2xy$ is even for every $(x,y)$, hence there are no solutions.
If $v\ge 1$, Lemma~\ref{lem:2xy-2} gives $r_m(t;H_0)=v\cdot 2^m$ for all $m>v$.
Thus
$2^{-m}r_m(t;H_0)=v$ for all $m>v$.
Hence $\alpha_2(t;H_0)=v$.
\end{proof}

\begin{corollary}[Scaled hyperbolic block]\label{cor:density-hyp-scaled}
Let $e\ge 0$ and let $L=2^eH_0$, so that $Q(x,y)=2^{e+1}xy$.
Let $t\in\Ztwo\setminus\{0\}$ and set $v:=v_2(t)$.
If $v<e+1$ then $\alpha_2(t;L)=0$.
If $v\ge e+1$ then
\[
  \alpha_2(t;L)=2^e\,(v-e).
\]
\end{corollary}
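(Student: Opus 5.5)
The plan is to mirror the proof of Corollary~\ref{cor:density-hyp-scaled-odd}, replacing the odd-prime count by its dyadic counterpart from Lemma~\ref{lem:2xy-2}. The scaled lattice $L=2^eH_0$ has quadratic form $Q(x,y)=2^{e+1}xy$, so in the notation of Lemma~\ref{lem:scaling-hyp} we have $r_m(t;L)=M^{(e)}_m(t)$ with $p=2$. The argument splits according to whether $v<e+1$ or $v\ge e+1$.

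\emph{Vanishing case.} If $v<e$, Lemma~\ref{lem:scaling-hyp} already gives $r_m(t;L)=0$. The borderline case $v=e$ needs a separate remark, since the scaling lemma does not cover it. Since $Q(L)\subseteq 2^{e+1}\Ztwo$, for every $m>v$ the congruence $2^{e+1}xy\equiv t\pmod{2^m}$ would force $2^{e+1}\mid t$ modulo $2^m$. This is impossible when $v_2(t)\le e<m$. Equivalently, Lemma~\ref{lem:scaling-hyp} gives $r_m(t;L)=2^{2e}M_{m-e}(t/2^e)$ with $t/2^e$ odd, and this vanishes by Lemma~\ref{lem:2xy-2}(i). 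In either form, $r_m(t;L)=0$ for all large $m$, so the density is $0$.

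\emph{Main case $v\ge e+1$.} Fix $m>v$, which also gives $m>e$. Lemma~\ref{lem:scaling-hyp} yields
\[
r_m(t;L)=2^{2e}\,M_{m-e}(t/2^e).
\]
Set $s_e:=t/2^e$, so that $v_2(s_e)=v-e$. This satisfies $1\le v-e<m-e$. Lemma~\ref{lem:2xy-2}(ii) therefore gives $M_{m-e}(s_e)=(v-e)\,2^{m-e}$. Combining the two displays,
\[
2^{-m}r_m(t;L)=2^{-m}\cdot 2^{2e}(v-e)2^{m-e}=2^e(v-e).
\]
This value is constant for all $m>v$, so the limit exists and equals $2^e(v-e)$.

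The only point requiring care is the range check in the main case. The stable formula of Lemma~\ref{lem:2xy-2}(ii) requires its valuation $v-e$ to lie strictly between $0$ and its modulus exponent $m-e$. Both conditions hold precisely because we assumed $v\ge e+1$ and chose $m>v$. No genuine obstacle is expected beyond this.
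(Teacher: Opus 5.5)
Your proof is correct and follows the paper's argument essentially verbatim: you reduce via Lemma~\ref{lem:scaling-hyp} to $r_m(t;L)=2^{2e}M_{m-e}(t/2^e)$ and read off the stable value from Lemma~\ref{lem:2xy-2}(i)--(ii). You treat $v<e$ (the vanishing clause of the scaling lemma) separately from $v=e$ (odd $t/2^e$), which is slightly more careful than the paper's ``$t/2^e$ is odd'' for all $v<e+1$, although the case $v=e$ is in fact covered by the reduction clause of Lemma~\ref{lem:scaling-hyp}, just not by its vanishing clause.
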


\begin{proof}
Fix $m>\max\{e,v\}$.
Lemma~\ref{lem:scaling-hyp} gives
\(r_m(t;L)=2^{2e}M_{m-e}(t/2^e)\).
If $v<e+1$ then $t/2^e$ is odd, hence $M_{m-e}(t/2^e)=0$.
If $v\ge e+1$, then $v_2(t/2^e)=v-e\ge 1$ and $m-e>v-e$.
Lemma~\ref{lem:2xy-2} yields $M_{m-e}(t/2^e)=(v-e)\,2^{m-e}$, so
\[
  2^{-m}r_m(t;L)=2^{-m}\cdot 2^{2e}\cdot (v-e)\,2^{m-e}=2^e(v-e),
\]
independent of $m$.
Hence $\alpha_2(t;L)=2^e(v-e)$.
\end{proof}

\begin{proof}[Proof of Theorem~\ref{thm:prime-uniform-h0}]
\noindent\emph{Strategy.} We compare the $Q$- and $q=Q/2$-normalisations via Lemma~\ref{lem:q-dictionary} and reduce to the explicit hyperbolic-plane density formulas already computed in Corollaries~\ref{cor:density-hyp-scaled-odd} and~\ref{cor:density-hyp-scaled}.

If $p$ is odd, then $2\in\Zp^\times$ and the map $t\mapsto 2t$ preserves $v_p(t)$.
The congruence $p^e xy\equiv t\pmod{p^m}$ is equivalent to $2p^e xy\equiv 2t\pmod{p^m}$, so
\(\alpha_p^{(q)}(t;p^eH_0)=\alpha_p(2t;p^eH_0)\).
The stated formula is exactly Corollary~\ref{cor:density-hyp-scaled-odd}.

For $p=2$, the lattice $2^eH_0$ is even and Lemma~\ref{lem:q-dictionary} gives
\[
  \alpha_2^{(q)}(t;2^eH_0)=\tfrac12\alpha_2(2t;2^eH_0).
\]

Write $v:=v_2(t)$, so that $v_2(2t)=v+1$.
Corollary~\ref{cor:density-hyp-scaled} gives $\alpha_2(2t;2^eH_0)=0$ when $v<e$.

Assume $v\ge e$.
Then $v+1\ge e+1$ and the same corollary yields
\(\alpha_2(2t;2^eH_0)=2^e\bigl((v+1)-e\bigr)=2^e(v-e+1)\).
Dividing by $2$ we obtain
\[
  \alpha_2^{(q)}(t;2^eH_0)=
  \begin{cases}
    0, & v<e,\\
    2^{e-1}(v-e+1), & v\ge e,
  \end{cases}
\]
which is exactly $(v-e+1)(p-1)p^{e-1}$ with $p=2$.
In particular, the vanishing threshold $v<e$ matches the odd-prime case.
\end{proof}

\subsection{Type II anisotropic block}

\begin{proposition}\label{prop:density-H1}
Let $t\in\Ztwo\setminus\{0\}$.
If $v_2(t)$ is even then $\alpha_2(t;H_1)=0$.
If $v_2(t)$ is odd then $\alpha_2(t;H_1)=3$.
\end{proposition}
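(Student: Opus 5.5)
The plan is to mirror the proof of Proposition~\ref{prop:density-H1}'s analogue for $H_0$ (Proposition~\ref{prop:density-hyp}) and read the density off the stable-range counts of Lemma~\ref{lem:H1}. Since $H_1$ has rank $2$, the normalising factor in \eqref{eq:density-def} is $2^{-m}$. Also, for the unscaled block one has $r_m(t;H_1)=A_m(t)$ by definition: the quadratic form is $Q(x,y)=2(x^2+xy+y^2)$ and $A_m(t)$ counts exactly the classes in $H_1/2^mH_1\cong(\Z/2^m\Z)^2$ with $Q\equiv t\pmod{2^m}$.

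Fix $t\ne 0$ and set $v:=v_2(t)<\infty$. We split into two cases according to the parity of $v$.

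\emph{Even $v$.} Lemma~\ref{lem:H1}(a) gives $A_m(t)=0$ for every $m>v$. Hence $2^{-m}r_m(t;H_1)=0$ for all $m>v$, so the limit exists and equals $0$. This covers the odd targets $v=0$ as well, which is consistent with the evenness of $H_1$ and Lemma~\ref{lem:q-dictionary}(i).

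\emph{Odd $v$.} Lemma~\ref{lem:H1}(b) gives $A_m(t)=3\cdot 2^m$ for every $m>v$. Hence $2^{-m}r_m(t;H_1)=3$ identically for $m>v$, so the sequence is eventually constant and the limit is $3$.

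There is essentially no obstacle, since all the work (the norm-form reformulation and the unit lifting) is already contained in Lemma~\ref{lem:H1}. The only point to check is that the stable range $m>v$ in Lemma~\ref{lem:H1} is non-empty and cofinal. It is, because $t\ne 0$ forces $v<\infty$. This is exactly why the singular target $t=0$, with its oscillating behaviour in Remark~\ref{rem:singular-targets}, is excluded.
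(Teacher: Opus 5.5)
Your proposal is correct and follows the paper's proof: for every $m>v_2(t)$ it reads $r_m(t;H_1)=A_m(t)$ off Lemma~\ref{lem:H1}(a)--(b), and it notes that $2^{-m}A_m(t)$ is then constant, equal to $0$ or $3$ according to the parity of $v_2(t)$. Your separate remark on odd targets, handled via the evenness of $Q$, is a useful sanity check, since the $t/2$ manipulation inside Lemma~\ref{lem:H1}(a) is only formal when $v_2(t)=0$.
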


\begin{proof}
For $m>v_2(t)$, Lemma~\ref{lem:H1} gives either $A_m(t)=0$ (even valuation) or $A_m(t)=3\cdot 2^m$ (odd valuation).
Thus for every $m>v_2(t)$ one has
\[
  2^{-m}r_m(t;H_1)=2^{-m}A_m(t)=
  \begin{cases}
    0, & v_2(t)\text{ is even},\\
    3, & v_2(t)\text{ is odd}.
  \end{cases}
\]
\end{proof}

\begin{corollary}[Scaled anisotropic block]\label{cor:density-H1-scaled}
Let $e\ge 0$ and let $L=2^eH_1$, so that $Q(x,y)=2^{e+1}(x^2+xy+y^2)$.
Let $t\in\Ztwo\setminus\{0\}$ and set $v:=v_2(t)$.
If $v<e+1$ then $\alpha_2(t;L)=0$.
If $v\ge e+1$, then
\[
  \alpha_2(t;L)=
  \begin{cases}
    3\cdot 2^e, & v-e\text{ is odd},\\
    0, & v-e\text{ is even}.
  \end{cases}
\]
\end{corollary}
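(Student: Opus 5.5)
The plan is to follow the proof of Corollary~\ref{cor:density-hyp-scaled}, replacing Lemma~\ref{lem:2xy-2} by Lemma~\ref{lem:H1} and Lemma~\ref{lem:scaling-hyp} by Lemma~\ref{lem:scaling-H1}.

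Fix $t\ne 0$ with $v=v_2(t)$.

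\emph{Vanishing for small valuation.} If $v<e+1$, then every value $Q(x,y)=2^{e+1}(x^2+xy+y^2)$ is divisible by $2^{e+1}$, while $t$ is not. So for $m>v$ the congruence $Q(x,y)\equiv t\pmod{2^m}$ has no solutions, $r_m(t;L)=0$, and the density is $0$. Equivalently, by Lemma~\ref{lem:scaling-H1}: when $v=e$, the quotient $t_e=t/2^e$ is odd, and Lemma~\ref{lem:H1}(a) with valuation $0$ gives $A_{m-e}(t_e)=0$ for $m-e>0$.

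\emph{Main case $v\ge e+1$.} Fix $m>\max\{e,v\}$ and set $t_e:=t/2^e$, so that $v_2(t_e)=v-e\ge 1$ and $m-e>v-e$. Lemma~\ref{lem:scaling-H1} gives
\[
  r_m(t;L)=A^{(e)}_m(t)=2^{2e}A_{m-e}(t_e).
\]
Since $m-e$ exceeds $v_2(t_e)$, Lemma~\ref{lem:H1} applies in its stable range:
\begin{itemize}
\item if $v-e$ is even, part~(a) gives $A_{m-e}(t_e)=0$;
\item if $v-e$ is odd, part~(b) gives $A_{m-e}(t_e)=3\cdot 2^{m-e}$.
\end{itemize}
Multiplying by the rank-$2$ normaliser, in the odd case
\[
  2^{-m}r_m(t;L)=2^{-m}\cdot 2^{2e}\cdot 3\cdot 2^{m-e}=3\cdot 2^e.
\]
This is independent of $m$ for all $m>\max\{e,v\}$, so the limit exists and equals the claimed value. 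In the even case the normalised count is identically $0$.

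\emph{Points to check.} There is no real obstacle here. I only need to make sure of two things. First, Lemma~\ref{lem:scaling-H1} is applied with $m>e$ as it requires. Second, the index shift is correct: the unscaled lemma is applied at level $m-e$ with target $t_e$, and its hypothesis $m-e>v_2(t_e)$ follows from $m>v$.
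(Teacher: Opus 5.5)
Your proposal is correct and follows essentially the same route as the paper: you reduce via Lemma~\ref{lem:scaling-H1} to $2^{2e}A_{m-e}(t/2^e)$ for $m>\max\{e,v\}$ and then read off the stable values from Lemma~\ref{lem:H1}(a)--(b). Your direct divisibility argument for $v<e+1$ is slightly cleaner than the paper's: the paper asserts that $t/2^e$ is odd, which tacitly covers only $v=e$ and leaves $v<e$ to the vanishing clause of Lemma~\ref{lem:scaling-H1}.
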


\begin{proof}
Fix $m>\max\{e,v\}$.
Lemma~\ref{lem:scaling-H1} gives
\(r_m(t;L)=2^{2e}A_{m-e}(t/2^e)\).
If $v<e+1$ then $t/2^e$ is odd and $A_{m-e}(t/2^e)=0$ by Lemma~\ref{lem:H1}(a).
If $v\ge e+1$, then $v_2(t/2^e)=v-e\ge 1$ and $m-e>v-e$.
Lemma~\ref{lem:H1}(a)--(b) gives $A_{m-e}(t/2^e)=0$ for even $v-e$ and
$A_{m-e}(t/2^e)=3\cdot 2^{m-e}$ for odd $v-e$.
In the latter case,
\[
  2^{-m}r_m(t;L)=2^{-m}\cdot 2^{2e}\cdot 3\cdot 2^{m-e}=3\cdot 2^e,
\]
independent of $m$.
\end{proof}

\subsection{Type I (rank one) blocks}

We also record the density for a rank-$1$ block $L=\langle 2^a u\rangle$.
Since $\mathrm{rank}(L)=1$, the density $\alpha_2(t;L)$ is the eventual constant value of $r_m(t;L)$.

\begin{proposition}\label{prop:density-typeI}
Let $L=\langle 2^a u\rangle$ with $a\ge 0$ and $u\in\Ztwo^\times$, and let $t\in\Ztwo\setminus\{0\}$.
Write $v:=v_2(t)$.
\begin{altenumeratealpha}
\item If $v<a$ or $v-a$ is odd, then $\alpha_2(t;L)=0$.
\item If $v\ge a$ and $v-a=2j$ is even, write $t=2^{a+2j}c$ with $c\in\Ztwo^\times$.
Then
\[
  \alpha_2(t;L)=
  \begin{cases}
    2^{a+j+2}, & u^{-1}c\equiv 1\pmod 8,\\
    0, & u^{-1}c\not\equiv 1\pmod 8.
  \end{cases}
\]
\end{altenumeratealpha}
\end{proposition}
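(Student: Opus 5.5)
The plan is to compute $r_m(t;L)$ exactly for all sufficiently large $m$ and show that it is eventually constant. Since $\mathrm{rank}(L)=1$, no normalisation occurs, so this constant value is $\alpha_2(t;L)$. The whole argument is a valuation stratification of $x$ together with the classical count of square roots of units modulo $2^M$; this is the content of Appendix~\ref{app:typeI}, which I would reprove inline. Fix $m$ large, say $m\ge v+3$. We must count
\[
  \#\{x\in\Z/2^m\Z:\ 2^a u x^2\equiv t\pmod{2^m}\}.
\]

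\emph{Step 1 (valuation constraint, giving (a)).} For any $x$, set $i:=v_2(x)$, with $i$ taken $\ge m$ if $x\equiv 0$. Then $2^a u x^2$ has valuation $a+2i$, or is $\equiv 0 \pmod{2^m}$. Since $m>v$, a solution forces the residue $t\bmod 2^m$ to have valuation exactly $v<m$, so we need $a+2i=v$. If $v<a$ or $v-a$ is odd, this is impossible, so $r_m(t;L)=0$ for every $m>v$. This gives~(a). The argument is the same "congruent modulo $2^m$ implies equal valuation below $m$" observation used in Lemma~\ref{lem:H1}(a).

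\emph{Step 2 (reduction to square roots of a unit).} In case (b), every solution has $v_2(x)=j$ exactly. Write $x=2^j y$, where $y$ is an odd residue modulo $2^{m-j}$; the map $y\mapsto 2^jy$ is a bijection onto the residues modulo $2^m$ of valuation exactly $j$. Dividing the congruence by $2^{a+2j}$ and multiplying by $u^{-1}$ turns it into
\[
  y^2\equiv u^{-1}c\pmod{2^{M}},\qquad M:=m-a-2j\ge 3.
\]
This condition depends only on $y \bmod 2^M$. Each class modulo $2^M$ has exactly $2^{(m-j)-M}=2^{a+j}$ lifts modulo $2^{m-j}$, exactly as in the lifting-count arguments of Lemmas~\ref{lem:scaling-hyp} and~\ref{lem:4adic-descent}.

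\emph{Step 3 (counting odd square roots modulo $2^M$, $M\ge 3$).} For an odd $w$, the congruence $y^2\equiv w \pmod{2^M}$ has no solution unless $w\equiv 1\pmod 8$, because odd squares are $\equiv 1\pmod 8$. When $w\equiv 1\pmod 8$, it has exactly $4$ solutions. I would prove this in one of two ways. The first option: the squaring map on $(\Z/2^M\Z)^\times$ has kernel $\{\pm1,\pm1+2^{M-1}\}$ of size $4$, so its image has size $2^{M-1}/4=2^{M-3}$; this image is contained in the set of residues $\equiv 1\pmod 8$, which also has size $2^{M-3}$, so the two sets coincide and every fibre has size $4$. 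The second option is to use the half-lift Theorem~\ref{thm:half-lift-principle} with $d=1$, which gives $N_{1,M+1}(w)=N_{1,M}(w)$, together with the base value $N_{1,3}(1)=4$; this requires an odd $w$, which we have.

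Combining Steps 2 and 3 gives $r_m(t;L)=4\cdot 2^{a+j}=2^{a+j+2}$ when $u^{-1}c\equiv 1\pmod 8$, and $r_m(t;L)=0$ otherwise, for all $m\ge a+2j+3$. This proves~(b). The main obstacle is Step~3, the exact count $4$, and within it the kernel computation for squaring. I would verify the kernel directly: if $y^2\equiv 1 \pmod{2^M}$, then $(y-1)(y+1)\equiv 0$; since $y$ is odd, one of the factors $y\pm1$ has valuation exactly $1$, so the other must be divisible by $2^{M-1}$.
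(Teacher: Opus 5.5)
Your proof is correct and follows essentially the same route as the paper, which combines Corollary~\ref{cor:typeI-counts} with Lemma~\ref{lem:square-roots}: force $v_2(x)=j$, reduce to $y^2\equiv u^{-1}c\pmod{2^{M}}$ with $M=m-a-2j\ge 3$, and count four odd roots to get $2^{a+j}\cdot 4=2^{a+j+2}$ for all $m>v+2$. The differences are cosmetic: you obtain existence of the square roots by counting (a kernel of order $4$, and an image equal to the classes $\equiv 1\pmod 8$) rather than by the paper's inductive lift, and your alternative via Theorem~\ref{thm:half-lift-principle} with $d=1$ is also valid, although the paper deliberately treats rank-one blocks without the half-lift.
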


\begin{proof}
\noindent\emph{Strategy.} Reduce $2^a u x^2\equiv t\pmod{2^m}$ to a square-root congruence modulo $2^{m-a}$ and apply Lemma~\ref{lem:square-roots}.

If $v<a$, then $t\notin 2^a\Ztwo$ while $Q(L)\subseteq 2^a\Ztwo$, so there are no solutions at any level and $\alpha_2(t;L)=0$.

Assume $v\ge a$ and write $t=2^a t_a$.
Fix $m>v+2$ (so in particular $m>a$).
Corollary~\ref{cor:typeI-counts} gives
\[
  r_m(t;L)=2^a\cdot\#\{x\bmod 2^{m-a}: x^2\equiv u^{-1}t_a\!\!\pmod{2^{m-a}}\}.
\]
Write $v_2(t_a)=v-a$.

If $v-a$ is odd, then $u^{-1}t_a$ has odd $2$-adic valuation, whereas a square has even valuation.
As $m-a>v-a$, the displayed congruence forces $v_2(x^2)=v_2(u^{-1}t_a)$, a contradiction.
Hence the displayed set is empty and $\alpha_2(t;L)=0$.

If $v-a=2j$ is even, write $t_a=2^{2j}c$ with $c\in\Ztwo^\times$.
As $m-a>2j+2$, Lemma~\ref{lem:square-roots}(ii)(c) shows that the number of solutions modulo $2^{m-a}$ is
$2^{j+2}$ if $u^{-1}c\equiv 1\pmod 8$ and $0$ otherwise.
In the nonzero case we obtain
\[
  r_m(t;L)=2^a\cdot 2^{j+2}=2^{a+j+2}
\]
for all $m>v+2$.
Since $\mathrm{rank}(L)=1$, the density equals this eventual constant.
\end{proof}

\subsection{The anisotropic ternary block $L_3$}

\begin{proposition}\label{prop:density-L3}
Let $t\in\Ztwo\setminus\{0\}$ be even.
Write
\[\frac{t}{2}=4^k a_0\qquad (k\ge 0,\;4\nmid a_0).\]
Then the density for $L_3=\langle 2\rangle^{\oplus 3}$ is
\[
  \alpha_2(t;L_3)=2^{-k-5}\,N_3(a_0),
\]
where $N_3(a_0)$ is given explicitly by Lemma~\ref{lem:mod8}.
If $t$ is odd then $\alpha_2(t;L_3)=0$.
\end{proposition}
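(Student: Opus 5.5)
The plan is to combine Lemma~\ref{lem:reduction} with the closed form of Proposition~\ref{prop:Nn-closed}, and then read off the normalised limit directly; rank $3$ means the normalising factor is $2^{-2m}$.

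\emph{Odd targets.} If $t$ is odd, Lemma~\ref{lem:reduction} gives $r_m(t;L_3)=0$ for every $m\ge 1$, so the normalised sequence is identically zero and $\alpha_2(t;L_3)=0$.

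\emph{Even targets.} Now let $t$ be even and nonzero, and put $a:=t/2=4^k a_0$ with $4\nmid a_0$. Since $t\in\Ztwo$ rather than $\Z$, I first note that $N_n(a)$ depends only on $a\bmod 2^n$. At any fixed level one may therefore replace $a$ by an integer representative congruent modulo a high power of $2$. If that power exceeds $2k+2$, the representative still has the form $4^k a_0'$ with $a_0'\equiv a_0\pmod 8$. Proposition~\ref{prop:Nn-closed} and Lemma~\ref{lem:mod8} then apply verbatim, because the lemmas used in their proofs are congruence statements.

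With this in place, Lemma~\ref{lem:reduction} gives $r_m(t;L_3)=8N_{m-1}(a)$. For $m-1\ge 2k+3$, Proposition~\ref{prop:Nn-closed} gives
\[
  r_m(t;L_3)=8\cdot 8^k\,N_3(a_0)\,4^{m-1-2k-3}=2^{3+3k}\,N_3(a_0)\,2^{2m-4k-8}.
\]
Multiplying by $2^{-2m}$ leaves $2^{-k-5}N_3(a_0)$, which is independent of $m$ for all $m\ge 2k+4$. So the limit exists and equals the stated value.

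\emph{Expected obstacle.} The only real point is the passage from integer targets $a\in\Z$, as in Proposition~\ref{prop:Nn-closed}, to $2$-adic targets. The level-by-level truncation described above handles this. Everything else is exponent bookkeeping: $3+3k+2m-4k-8-2m=-k-5$.
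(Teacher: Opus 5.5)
Your proposal is correct and follows the paper's proof essentially verbatim: combine Lemma~\ref{lem:reduction} with Proposition~\ref{prop:Nn-closed}, then normalise by $4^m$ to get $8\cdot 8^k\cdot 4^{-2k-4}N_3(a_0)=2^{-k-5}N_3(a_0)$. Two small additions go beyond the paper: your remark replacing the $2$-adic target $t/2$ by an integer representative congruent modulo $2^{\max(m-1,\,2k+3)}$ fills a point the paper passes over silently, and your threshold $m\ge 2k+4$ is the sharp form of the paper's $m>v_2(t)+2$.
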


\begin{proof}
If $t$ is odd there are no solutions, since $Q(L_3)$ is always even.
Assume $t$ is even.
Write $t/2=4^k a_0$ with $4\nmid a_0$.
Then $t=2^{2k+1}a_0$, and hence $v_2(t)=2k+1+v_2(a_0)\in\{2k+1,2k+2\}$.
Fix $m>v_2(t)+2$.
Then $m-1\ge 2k+3$, so Proposition~\ref{prop:Nn-closed} applies to $N_{m-1}(t/2)$.
Together with Lemma~\ref{lem:reduction} this gives
\[
  r_m(t;L_3)=8\,N_{m-1}(t/2)=8\cdot 8^k N_3(a_0)4^{m-1-2k-3}.
\]
Normalising by $4^m$ (since $\mathrm{rank}(L_3)=3$) and simplifying makes the stabilisation explicit:
\[
  \frac{r_m(t;L_3)}{4^m}
  =8\cdot 8^k N_3(a_0)\,4^{m-2k-4}\cdot 4^{-m}
  =8\cdot 8^k N_3(a_0)\,4^{-2k-4}
  =2^{-k-5}N_3(a_0),
\]
independent of $m$.
\end{proof}

\begin{remark}\label{rem:L3-not-jordan}
The lattice $L_3=\langle 2\rangle^{\oplus 3}$ is included as a distinguished anisotropic ternary $\Ztwo$-lattice (equivalently, $L_3\cong 2\langle 1\rangle^{\oplus 3}$).
It is not one of the basic dyadic blocks of Definition~\ref{def:dyadic-base-blocks}, and we do not attempt to treat arbitrary odd unimodular constituents of rank~$3$ in the Jordan decomposition of Proposition~\ref{prop:jordan}.
We record the density only for $L_3$ itself because it encodes the classical three-squares congruence counts.

Of course, $L_3$ is also an orthogonal sum of three rank-$1$ Type~I blocks.
In principle one could recover $\alpha_2(t;L_3)$ by iterating the finite-level convolution of Proposition~\ref{prop:orthogonal-convolution} together with the rank-$1$ formulas of Appendix~\ref{app:typeI}; this is possible but leads to a longer valuation/residue case analysis.
Section~\ref{sec:half-lift} gives a direct computation via half-lift and $4$-adic descent, and it can be read as a nontrivial test case for the ``compose blocks'' philosophy discussed below.
\end{remark}

\subsection*{Summary table of the computed densities}

For quick reference we collect the final density values for the basic blocks treated in this paper.
The hyperbolic plane $H_0$ is included at all primes; the remaining entries concern the dyadic blocks at $p=2$.
Scaled blocks $2^eH_0$ and $2^eH_1$ are obtained from
Corollaries~\ref{cor:density-hyp-scaled} and~\ref{cor:density-H1-scaled}.

\begin{proposition}[Stable-range thresholds]\label{prop:stable-thresholds}
Let $L$ be one of the lattices treated in this paper, and let $t\in\Ztwo\setminus\{0\}$ with $v:=v_2(t)$.
Then the closed forms for $r_m(t;L)$ used in the proofs of
Propositions~\ref{prop:density-hyp}, \ref{prop:density-H1}, \ref{prop:density-typeI}, and \ref{prop:density-L3}
already hold once $m$ exceeds an explicit bound depending only on $v$:
\begin{altenumerate}
\item For the even planes $H_0$ and $H_1$ (and their scalings), the stabilised formulas hold for all $m>v$.
\item For a Type~I block $\langle 2^a u\rangle$ and for $L_3=\langle 2\rangle^{\oplus 3}$, the stabilised formulas hold for all $m>v+2$.
\end{altenumerate}
In particular, for each fixed $(L,t)$ the density $\alpha_2(t;L)$ is obtained by evaluating the stabilised expression at any $m$ above the corresponding threshold.
\end{proposition}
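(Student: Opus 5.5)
The proof is a case-by-case audit of the earlier proofs. For each lattice I locate the exact level from which the closed form for $r_m(t;L)$ was derived, and check that it depends only on $v=v_2(t)$.

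\emph{Part (i): the even planes.} For $H_0$, Lemma~\ref{lem:2xy-2}(ii) gives $M_m(t)=v\cdot 2^m$ for every $m$ with $1\le v<m$, that is, for all $m>v$. When $v=0$ the count vanishes for all $m\ge1$. For $H_1$, Lemma~\ref{lem:H1}(a),(b) are stated exactly for $m>v$.

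For the scalings $2^eH_0$ and $2^eH_1$ I invoke Lemmas~\ref{lem:scaling-hyp} and~\ref{lem:scaling-H1}. These give $r_m(t;L)=2^{2e}M_{m-e}(t/2^e)$, respectively $2^{2e}A_{m-e}(t/2^e)$, when $v\ge e$, and $r_m(t;L)=0$ when $v<e$. If $m>v\ge e$, then $m-e>v-e=v_2(t/2^e)$, so the unscaled stable formula applies. When $v<e$, the vanishing holds at every level. Hence the threshold $m>v$ suffices in all cases, which is slightly sharper than the $m>\max\{e,v\}$ used in Corollaries~\ref{cor:density-hyp-scaled} and~\ref{cor:density-H1-scaled}.

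\emph{Part (ii), Type~I.} In the proof of Proposition~\ref{prop:density-typeI} the count was computed for $m>v+2$. I will check that each case needs no more than this:
\begin{itemize}
\item If $v<a$, the count vanishes for all $m$.
\item If $v-a$ is odd, the valuation argument needs only $m-a>v-a$.
\item If $v-a=2j$, Lemma~\ref{lem:square-roots}(ii)(c) needs $m-a>2j+2$, i.e.\ $m>v+2$.
\end{itemize}
So the bound $m>v+2$ covers every case.

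\emph{Part (ii), $L_3$.} For odd $t$ the count is zero at every level. For even $t$, write $t/2=4^ka_0$, so that $v\in\{2k+1,2k+2\}$. Lemma~\ref{lem:reduction} and Proposition~\ref{prop:Nn-closed} require $m-1\ge 2k+3$, i.e.\ $m\ge 2k+4$. This holds whenever $m>v+2\ge 2k+3$, so $m>v+2$ suffices.

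The closing assertion follows because in every case $2^{-m(n-1)}r_m(t;L)$ is constant above the threshold, so the limit equals that constant.

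There is no real obstacle. The only point needing care is the edge of the Type~I and $L_3$ thresholds: one must confirm the strict inequalities match, namely $m-a>2j+2$ and $m-1\ge 2k+3$. For $L_3$ with $v=2k+2$ the stated bound $m>v+2$ is not sharp, but it is still sufficient.
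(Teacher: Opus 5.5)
Your proposal is correct and is essentially the paper's own proof. It is a case-by-case check that the closed forms from Lemmas~\ref{lem:2xy-2}, \ref{lem:H1}, \ref{lem:square-roots} and Proposition~\ref{prop:Nn-closed} already hold once $m>v$ (even planes) or $m>v+2$ (Type~I and $L_3$), and it includes the same observation that the $L_3$ bound is one more than needed when $v=2k+2$.

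You are more explicit than the paper about the scaled planes and the Type~I subcases. The only nit concerns phrases like ``vanishes for all $m$'' (for $v<a$ or $v<e$). These are literally true only for $m>v$, because for $m\le v$ the target is $0$ modulo $2^m$. That is exactly the range your thresholds cover, so nothing breaks.
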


\begin{proof}
\smallskip
\noindent\emph{Even planes.}
For $H_0$ and $H_1$ this is exactly the point at which the valuation-stratified counts (respectively the norm-recursion) become uniform; see Lemmas~\ref{lem:2xy-2} and~\ref{lem:H1}.

\smallskip
\noindent\emph{Type~I blocks.}
For a Type~I block, one reduces to a square-root congruence modulo $2^{m-a}$; once $m-a>v-a+2$ (i.e.\ $m>v+2$) the unit square-root count stabilises (Lemma~\ref{lem:square-roots}).

\smallskip
\noindent\emph{The lattice $L_3$.}
For $L_3$, write $t/2=4^k a_0$ with $4\nmid a_0$.
Proposition~\ref{prop:Nn-closed} gives a stable closed form for $N_n(t/2)$ once $n\ge 2k+3$.
Since $r_m(t;L_3)=8N_{m-1}(t/2)$ (Lemma~\ref{lem:reduction}), it suffices to ensure $m-1\ge 2k+3$.
The uniform bound $m>v+2$ does this because $v=v_2(t)\ge 2k+1$.
If $a_0$ is even (so $v=2k+2$) this bound is off by one; we prefer the slightly stronger uniform threshold stated in the proposition.
\end{proof}

\begin{proposition}[Orthogonal-sum convolution]\label{prop:orthogonal-convolution}
Let $L$ and $M$ be quadratic $\Zp$-lattices, and equip $L\oplus M$ with the orthogonal direct-sum quadratic form $Q_{L\oplus M}(v,w):=Q_L(v)+Q_M(w)$.
Then for every $m\ge 1$ and $t\in\Zp$ one has the finite-level identity
\[
  r_m(t;L\oplus M)
  \,=\,
  \sum_{s\in \Z/p^m\Z} r_m(s;L)\,r_m(t-s;M).
\]
\end{proposition}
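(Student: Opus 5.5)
This follows directly from the definition \eqref{eq:rep-count}, by decomposing the solution set according to the value taken on the first summand. I will first fix the identification $(L\oplus M)/p^m(L\oplus M)\cong L/p^mL\times M/p^mM$. This holds because $p^m(L\oplus M)=p^mL\oplus p^mM$ as submodules of the free module $L\oplus M$. I will also note that $Q_L(v)\bmod p^m$ depends only on $v\bmod p^mL$: indeed $Q_L(v+p^m w)=Q_L(v)+2p^m\langle v,w\rangle+p^{2m}Q_L(w)$. The same holds for $Q_M$. So the counts $r_m(s;L)$ and $r_m(s;M)$ are well defined for $s\in\Z/p^m\Z$, and so is the map $(v,w)\mapsto Q_L(v)+Q_M(w)\bmod p^m$ on pairs of residue classes.

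Next I will partition the solution set
\[
  S:=\{(v,w)\in L/p^mL\times M/p^mM : Q_L(v)+Q_M(w)\equiv t \pmod{p^m}\}
\]
according to the residue $s:=Q_L(v)\bmod p^m\in\Z/p^m\Z$. For a fixed $s$, the fibre consists of the pairs with $Q_L(v)\equiv s$ and $Q_M(w)\equiv t-s\pmod{p^m}$. These two conditions are independent, so the fibre is the product of the two sets counted by $r_m(s;L)$ and $r_m(t-s;M)$. Summing the cardinalities of the fibres over the $p^m$ classes $s$ gives the stated identity.

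The only step needing care is the well-definedness point above, which is where one might worry at $p=2$. The expansion shows that the cross term $2p^m\langle v,w\rangle$ and the term $p^{2m}Q_L(w)$ are both divisible by $p^m$ for every prime. So the identity holds for all $p$, including $p=2$, and no evenness hypothesis on $L$ or $M$ is needed.
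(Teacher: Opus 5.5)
Your proof is correct and follows the paper's argument: you identify $(L\oplus M)/p^m(L\oplus M)$ with $L/p^mL\times M/p^mM$, partition the solutions by the residue $s=Q_L(v)\bmod p^m$, and observe that each fibre is the product of the two solution sets. Your explicit check that $Q_L(v)\bmod p^m$ is well defined on residue classes, including at $p=2$, is a detail the paper leaves implicit.
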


\begin{proof}
An element of $(L\oplus M)/p^m(L\oplus M)$ is a pair $(v,w)$ with $v\in L/p^mL$ and $w\in M/p^mM$.
The congruence $Q_L(v)+Q_M(w)\equiv t\pmod{p^m}$ holds if and only if $Q_L(v)\equiv s\pmod{p^m}$ and $Q_M(w)\equiv t-s\pmod{p^m}$ for some residue class $s\in\Z/p^m\Z$.
Summing over $s$ gives the stated identity.
\end{proof}

\noindent
Proposition~\ref{prop:orthogonal-convolution} shows that once the basic block counts are known, representation counts for orthogonal sums reduce to explicit (finite) convolutions.

At the density level one normalises by $2^{m(n-1)}$ and lets $m\to\infty$.
Although the convolution sum in Proposition~\ref{prop:orthogonal-convolution} ranges over $2^m$ residue classes, for the block families computed in this paper the sum can be organised into finitely many valuation/residue strata whose contributions stabilise.
This is already visible in Example~\ref{ex:H0-plus-1}; we record the general mechanism in a short lemma.

\begin{lemma}[Stable stratification of dyadic block convolutions]\label{lem:stable-convolution}
Let $L,M$ be (scaled) dyadic blocks appearing in Table~\ref{tab:block-densities}.
Fix $t\in\Ztwo\setminus\{0\}$ and set $n:=\mathrm{rank}(L)+\mathrm{rank}(M)$.
Then for all $m$ above the stable thresholds of Proposition~\ref{prop:stable-thresholds} for the relevant block types (for example, for $m>v_2(t)+2$), the normalised convolution sum
\[
  2^{-m(n-1)}\,r_m(t;L\oplus M)
  \,=\,
  2^{-m(n-1)}\sum_{s\bmod 2^m} r_m(s;L)\,r_m(t-s;M)
\]
reduces to a finite sum over explicit valuation/residue strata (indexed by $v_2(s)$ and, when relevant, the residue class of $2^{-v_2(s)}s$ modulo $8$), together with at most a geometric tail indexed by $v_2(t-s)$.
In particular, the density $\alpha_2(t;L\oplus M)$ exists and can be assembled from finitely many stable stratum contributions (equivalently, finitely many evaluations of geometric series), using the explicit block data in Table~\ref{tab:block-densities}.
\end{lemma}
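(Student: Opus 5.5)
We split the convolution sum of Proposition~\ref{prop:orthogonal-convolution} by valuation and then use the block counts in their stable form. Fix $t\ne 0$ with $v:=v_2(t)$, and take $m$ large, for instance $m>v+2+c$, where $c$ is the largest scale exponent of the two blocks. For each residue $s\bmod 2^m$, one of three things holds. Either $v_2(s)<v$, in which case $v_2(t-s)=v_2(s)$. Or $v_2(s)>v$, in which case $v_2(t-s)=v$. Or $v_2(s)=v$, in which case $v_2(t-s)\ge v+1$ can be arbitrarily large. This gives three families of strata. The first two are indexed by a single valuation $j=v_2(s)$ together with the unit class of $2^{-j}s$ modulo $8$. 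The third, where $s$ and $t$ share valuation, is indexed by $w:=v_2(t-s)$ and is the source of the geometric tail.

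Within one stratum, Lemmas~\ref{lem:2xy-2}, \ref{lem:scaling-hyp}, \ref{lem:H1}, \ref{lem:scaling-H1} and Proposition~\ref{prop:density-typeI} show that $r_m(s;L)$ depends only on $(v_2(s),\,2^{-v_2(s)}s\bmod 8)$, provided $m$ exceeds that stratum's threshold. There it equals $2^{m(\mathrm{rank}L-1)}$ times the stable density value $\alpha_2(s;L)$ from Table~\ref{tab:block-densities}. The same holds for $r_m(t-s;M)$. The number of residues $s\bmod 2^m$ in a stratum with prescribed valuation $j$ and class modulo $8$ is $2^{m-j-3}$ (or $2^{m-j-1}$ if only the valuation is prescribed). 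Multiplying out, the factors of $2^m$ combine as $2^{m(\mathrm{rank}L-1)}\cdot 2^{m(\mathrm{rank}M-1)}\cdot 2^m=2^{m(n-1)}$. Hence each stable stratum contributes an $m$-independent quantity after normalisation. Strata with $v_2(s)>v$ lie on the $L$-side, and for them the factor from $L$ decays like $2^{-j}$ times a density of polynomial growth in $j$. The $H_0$ and Type~I entries have the forms $v\cdot 2^e$ and $2^{a+j+2}$ respectively; combined with the $2^{-j}$ counting factor these give convergent geometric or arithmetic-geometric tails. The third family is handled symmetrically in $w$.

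The main obstacle is the non-stable strata: residues $s$ with $v_2(s)$ or $v_2(t-s)$ close to $m$, where the counts are the singular ones ($M_m(0)$, $A_m(0)=4^{\lceil m/2\rceil}$, or rank-one counts at level $m$). For these I would bound the total contribution directly. The number of such $s$ is $O(2^{m-j})$ with $j\ge m-O(1)$, so $O(1)$. The counts are at most $O(m)\,2^{m(\mathrm{rank}-1)}$ for the planes and at most $O(2^{m/2})$ for rank one. After normalisation the contribution of this region is therefore $O(m2^{-m/2})$ or smaller and tends to $0$. I would state this precisely as an $m$-dependent error term and let $m\to\infty$. The claim ``reduces to a finite sum together with a geometric tail'' then holds up to this vanishing error, and the density exists as the sum of finitely many stable stratum terms plus convergent geometric series.

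I expect the delicate case to be when both $L$ and $M$ are rank one. There the per-stratum stabilisation threshold grows with $j$ (it is $m>j+2$), so I must truncate at $j\le m-3$ and check the discarded range against the bound above. For the other block types the thresholds are $m>j$, and the argument goes through uniformly.
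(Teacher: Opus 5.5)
Your argument is sound and follows the same strategy as the paper. You stratify $s \bmod 2^m$ by valuation and unit class modulo $8$, replace each block count by $2^{m(\mathrm{rank}-1)}$ times its stable value, note that the $2^m$ residues absorb the remaining normalising factor, and sum the unbounded family as a convergent geometric series.

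It differs from the paper's proof in two useful ways.
\begin{itemize}
\item Your three families $v_2(s)<v$, $v_2(s)>v$, $v_2(s)=v$ expose both unbounded tails, $v_2(s)\gg v$ and $v_2(t-s)\gg v$. The paper's proof parametrises only $s=t+2^{w+1}u$ and calls it the only source of unboundedly many strata; the other tail follows by $s\mapsto t-s$, which you make explicit.
\item The paper rewrites the sum as $\sum_s 2^{-m}\beta_L(s)\beta_M(t-s)$ with $m$-independent $\beta$'s on every residue. This fails on the singular residues, e.g.\ $2^{-m}r_m(0;H_0)=m+1$. You isolate these $O(1)$ residues and show they contribute $o(1)$.
\end{itemize}
This makes the existence of $\alpha_2(t;L\oplus M)$, and its assembly from finitely many stable terms plus convergent geometric series, rigorous. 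The price is that you get the finite-level reduction only up to a vanishing error, not as an exact $m$-independent identity. You can recover exactness by summing each tail in one go. For $s\equiv 0\pmod{2^{v+3}}$ the $M$-factor is constant, and the identity $\sum_{s\equiv s_0 \bmod 2^k} r_m(s;L)=2^{\mathrm{rank}(L)(m-k)}\,r_k(s_0;L)$ turns the whole tail, singular residues included, into an $m$-independent term. The tail $s\equiv t\pmod{2^{v+3}}$ is handled symmetrically.

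Two points need repair.
\begin{itemize}
\item \emph{Type~I growth rate.} For a Type~I block the stable value at $v_2(s)=j$ is $2^{(j+a)/2+2}$ when nonzero. This grows exponentially in $j$, not polynomially, because the $j$ in the table entry $2^{a+j+2}$ is $(v_2(s)-a)/2$. Against the counting factor $2^{-j}$ the product still decays like $2^{-j/2}$, so your conclusion holds. As written, however, the two $j$'s cancel and give no decay.
\item \emph{The lattice $L_3$.} You never treat $L_3$, which appears in Table~\ref{tab:block-densities}. Its stabilisation threshold is $m>v_2(s)+2$, as in the rank-one case, not $m>v_2(s)$. On the singular residues the trivial bound $r_m(s;L_3)\le 2^{3m}$ gives only an $O(1)$ error after normalisation by $2^{-m(n-1)}$. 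You need a bound such as $r_m(s;L_3)=O(2^{3m/2})$ for $v_2(s)\ge m-O(1)$, which follows by iterating Lemma~\ref{lem:4adic-descent}.
\end{itemize}
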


\begin{proof}
Write $r:=\mathrm{rank}(L)$ and $r':=\mathrm{rank}(M)$, so that $n=r+r'$.
For each block $B\in\{L,M\}$ and each fixed residue class $u\bmod 2^m$, the stable-range formulas in \S\ref{sec:densities} show that, once $m$ lies above the stable threshold for $B$, the normalised quantity
\[
  \beta_B(u):=2^{-m(\mathrm{rank}(B)-1)}\,r_m(u;B)
\]
is independent of $m$ and depends on $u$ only through $v_2(u)$ and (when $B$ is Type~I or $L_3$) a bounded amount of unit data, which may be taken as the class of $2^{-v_2(u)}u$ modulo~$8$.
In particular, for all such $m$ we may rewrite the normalised convolution as
\[
  2^{-m(n-1)}r_m(t;L\oplus M)
  =\sum_{s\bmod 2^m} 2^{-m}\,\beta_L(s)\,\beta_M(t-s).
\]

We now stratify $s\bmod 2^m$ so that both factors $\beta_L(s)$ and $\beta_M(t-s)$ are constant on each stratum.
For fixed nonzero $t$, set $w:=v_2(t)$.
The relevant data for $\beta_L(s)$ is $(v_2(s),\,2^{-v_2(s)}s\bmod 8)$ (the second component only when needed), and similarly for $\beta_M(t-s)$.
This already yields only finitely many strata, since $v_2(s)$ and $v_2(t-s)$ are truncated by $m$ and the unit data lives in $(\Z/8\Z)^\times$.

The only source of an \emph{unbounded} family of strata (as $m\to\infty$) is the possibility that $v_2(t-s)$ is much larger than $w$.
But $v_2(t-s)>w$ holds if and only if $s\equiv t\pmod{2^{w+1}}$.
In this tail regime we may write
\[
  s = t + 2^{w+1}u\qquad (u\bmod 2^{m-w-1}),
\]
and then
\[
  v_2(t-s)=v_2(2^{w+1}u)=w+1+v_2(u).
\]
Thus the tail can be indexed by the valuation $v_2(u)=0,1,\dots,m-w-2$ (together with the single class $u\equiv 0\pmod{2^{m-w-1}}$), and the number of such $u$ with $v_2(u)=j$ is $2^{m-w-2-j}$.
Since $\beta_L(s)$ and $\beta_M(t-s)$ are constant on these valuation/residue classes, the tail contribution becomes a finite geometric series in~$j$.
All remaining strata are finite in number and have cardinality of the form $2^{m-\ell}$ with $\ell$ independent of $m$, so their contributions are $m$-independent after the normalising factor $2^{-m}$.
\end{proof}

Deriving compact closed forms for orthogonal sums can still become combinatorial, but the lemma shows that in the present block families one is convolving \emph{stable} pieces, so the density limit may be computed systematically.
The following worked example spells this out in a simple dyadic case.

\begin{example}[A dyadic Jordan sum: $H_0\oplus\langle 1\rangle$]\label{ex:H0-plus-1}
Let $p=2$ and consider the rank-$3$ lattice $L:=H_0\oplus\langle 1\rangle$ with quadratic form
\[
  Q(x,y,z)=2xy+z^2.
\]
Fix an \emph{odd} target $t\in\Ztwo^\times$ and take $m\ge 3$ (so in particular $m>v_2(t-s)+2$ for every contributing residue class $s$).
For each residue class $s\in\Z/2^m\Z$ we have the convolution identity
\[
  r_m(t;L)=\sum_{s\bmod 2^m} r_m(s;H_0)\,r_m(t-s;\langle 1\rangle).
\]

By Lemma~\ref{lem:2xy-2}, the factor $r_m(s;H_0)=M_m(s)$ is nonzero only when $s$ is even.
Since $t$ is odd, any contributing residue class $s$ forces $t-s$ to be odd.
Lemma~\ref{lem:square-roots}(ii) then shows that $r_m(t-s;\langle 1\rangle)$ equals $4$ if $t-s\equiv 1\pmod 8$ and $0$ otherwise.
Thus only the congruence class $s\equiv t-1\pmod 8$ contributes, and
\[
  r_m(t;L)=4\sum_{\substack{s\bmod 2^m\\ s\equiv t-1\pmod 8}} M_m(s).
\]

Write $t\bmod 8$.
There are three cases.

\begin{altitemize}
\item If $t\equiv 3,7\pmod 8$, then any contributing $s\equiv t-1\pmod 8$ has $v_2(s)=1$, hence $M_m(s)=2^m$.
There are $2^{m-3}$ such residue classes, so
\[
  r_m(t;L)=4\cdot 2^{m-3}\cdot 2^m=2^{2m-1}.
\]

\item If $t\equiv 5\pmod 8$, then any contributing $s\equiv 4\pmod 8$ has $v_2(s)=2$, hence $M_m(s)=2\cdot 2^m$.
Thus
\[
  r_m(t;L)=4\cdot 2^{m-3}\cdot (2\cdot 2^m)=2^{2m}.
\]

\item If $t\equiv 1\pmod 8$, then $s\equiv 0\pmod 8$.
Write $s=8u$ with $u\bmod 2^{m-3}$.
Using Lemma~\ref{lem:2xy-2} and the identity
\(
  \sum_{u\bmod 2^{k}} v_2(u)=2^{k}-1
\)
(with the convention $v_2(0)=k$), we compute
\begin{align*}
  \sum_{\substack{s\bmod 2^m\\ s\equiv 0\pmod 8}} M_m(s)
  &=2^m\Bigl(\sum_{u\bmod 2^{m-3}} (3+v_2(u))+1\Bigr)\\
  &=2^m\Bigl(3\cdot 2^{m-3}+(2^{m-3}-1)+1\Bigr)
   =2^{2m-1}.
\end{align*}
Hence $r_m(t;L)=4\cdot 2^{2m-1}=2^{2m+1}$.
\end{altitemize}

In every case the normalised density is obtained by dividing by $2^{m(3-1)}=2^{2m}$:
\[
  \alpha_2(t;H_0\oplus\langle 1\rangle)=
  \begin{cases}
    2, & t\equiv 1\pmod 8,\\
    1, & t\equiv 5\pmod 8,\\
    \tfrac12, & t\equiv 3,7\pmod 8.
  \end{cases}
\]
This illustrates two general features: once the basic block counts are explicit, assembling dyadic Jordan sums is a finite calculation, and the normalising factor $2^{-m(n-1)}$ in \eqref{eq:density-def} compensates precisely for the $2^m$ choices of the intermediate residue class $s$.
\end{example}

\begin{table}[ht]
\centering
\small
{\renewcommand{\arraystretch}{1.15}%
\begin{tabularx}{\textwidth}{@{}>{\raggedright\arraybackslash}p{0.38\textwidth}>{\raggedright\arraybackslash}X@{}}
\toprule
\textbf{Local lattice $L$} & \textbf{$Q$-normalised density $\alpha_p(t;L)$ for $t\in\Zp\setminus\{0\}$}\\
\midrule
\multicolumn{2}{@{}l@{}}{\textbf{Odd primes ($p\ne 2$)}}\\
\addlinespace[0.25em]
$H_0$ over $\Zp$ (odd $p$) &
If $v:=v_p(t)$, then $\alpha_p(t;H_0)=(v+1)\dfrac{p-1}{p}$ (Proposition~\ref{prop:density-hyp-odd}).\\

\addlinespace[0.4em]
\multicolumn{2}{@{}l@{}}{\textbf{Dyadic prime ($p=2$)}}\\
\addlinespace[0.25em]

$H_0$ over $\Ztwo$ &
If $v:=v_2(t)$, then $\alpha_2(t;H_0)=0$ for $v=0$ and $\alpha_2(t;H_0)=v$ for $v\ge 1$ (Proposition~\ref{prop:density-hyp}).\\

\addlinespace[0.45em]

$H_1$ (anisotropic even plane) &
If $v:=v_2(t)$, then $\alpha_2(t;H_1)=0$ for even $v$ and $\alpha_2(t;H_1)=3$ for odd $v$ (Proposition~\ref{prop:density-H1}).\\

\addlinespace[0.45em]

Type~I block $\langle 2^a u\rangle$ ($a\ge 0$, $u\in\Ztwo^\times$) &
\begin{minipage}[t]{\linewidth}
Write $v:=v_2(t)$.
If $v<a$ or $v-a$ is odd, then $\alpha_2(t;\langle 2^a u\rangle)=0$.
Otherwise write $t=2^{a+2j}c$ with $c\in\Ztwo^\times$.
Then
\[
  \alpha_2(t;\langle 2^a u\rangle)=
  \begin{cases}
    2^{a+j+2}, & u^{-1}c\equiv 1\pmod 8,\\
    0, & u^{-1}c\not\equiv 1\pmod 8.
  \end{cases}
\]
(Proposition~\ref{prop:density-typeI}).
\end{minipage}\\

\addlinespace[0.6em]

$L_3=\langle 2\rangle^{\oplus 3}$ &
\begin{minipage}[t]{\linewidth}
If $t$ is odd, then $\alpha_2(t;L_3)=0$.
If $t$ is even, write $t/2=4^k a_0$ with $k\ge 0$ and $4\nmid a_0$ (so $a_0\not\equiv 0,4\pmod 8$).
Then
\[
  \alpha_2(t;L_3)=2^{-k}\cdot
  \begin{cases}
    3, & a_0\equiv 1,2,5,6\pmod 8,\\
    2, & a_0\equiv 3\pmod 8,\\
    0, & a_0\equiv 7\pmod 8.
  \end{cases}
\]
(Proposition~\ref{prop:density-L3} and Lemma~\ref{lem:mod8}).
\end{minipage}\\
\bottomrule
\end{tabularx}}
\caption{Local representation densities (in the $Q(v)=\langle v,v\rangle$ normalisation of \eqref{eq:density-def}) for the basic blocks computed in this paper.
The hyperbolic plane $H_0$ is included at all primes; the remaining rows concern $p=2$.
For even dyadic blocks one may instead work with $q:=Q/2$; Lemma~\ref{lem:q-dictionary} gives the exact conversion $\alpha_2(2t;L)=2\,\alpha_2^{(q)}(t;L)$.
Under the $q$-normalisation, the two $H_0$ rows combine into the prime-uniform formula of Theorem~\ref{thm:prime-uniform-h0}.}
\label{tab:block-densities}
\end{table}

\begin{corollary}[Dyadic vanishing thresholds in the $Q$- and $q$-normalisations]
\label{cor:dyadic-vanishing-thresholds}
Let $p=2$.
Let $L$ be an \emph{even} dyadic block appearing in Table~\ref{tab:block-densities}
(so $L$ is one of $2^eH_0$, $2^eH_1$, $L_3$, or $\langle 2^a u\rangle$ with $a\ge 1$).
For $t\in\Ztwo\setminus\{0\}$ write $v:=v_2(t)$.

\begin{altenumeratealpha}
\item \emph{$Q$-normalisation.}
The density $\alpha_2(t;L)$ vanishes unless the following valuation constraints hold:
\begin{altitemize}
\item $L=2^eH_0$: one must have $v\ge e+1$ (Corollary~\ref{cor:density-hyp-scaled}).
\item $L=2^eH_1$: one must have $v\ge e+1$ and $v-e$ odd (Corollary~\ref{cor:density-H1-scaled}).
\item $L=L_3$: one must have $v\ge 1$ (Lemma~\ref{lem:reduction}); further unit obstructions are recorded in Proposition~\ref{prop:density-L3}.
\item $L=\langle 2^a u\rangle$ with $a\ge 1$: one must have $v\ge a$ and $v-a$ even (Proposition~\ref{prop:density-typeI}).
\end{altitemize}

\item \emph{$q=Q/2$-normalisation.}
For even $L$ write $q:=Q/2$ and $t=2t'$.
Lemma~\ref{lem:q-dictionary} gives the exact conversion
\(\alpha_2(2t';L)=2\,\alpha_2^{(q)}(t';L)\).
In particular, the systematic valuation constraints in~(a) shift down by one when passing from $(Q,t)$ to $(q,t')$:
for example $v_2(t)\ge e+1$ for $2^eH_0$ becomes $v_2(t')\ge e$, and $v_2(t)\ge e+1$ with $v_2(t)-e$ odd for $2^eH_1$ becomes $v_2(t')\ge e$ with $v_2(t')-e$ even.
\end{altenumeratealpha}
\end{corollary}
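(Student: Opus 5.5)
The plan is to read off part~(a) block by block from the density results already established, and then obtain part~(b) by substituting $t=2t'$ and applying Lemma~\ref{lem:q-dictionary}.

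\emph{Part (a).} We take the four cases in turn.
\begin{altitemize}
\item For $2^eH_0$, Corollary~\ref{cor:density-hyp-scaled} states directly that $\alpha_2(t;2^eH_0)=0$ whenever $v<e+1$.
\item For $2^eH_1$, Corollary~\ref{cor:density-H1-scaled} gives vanishing when $v<e+1$. It also gives vanishing when $v\ge e+1$ but $v-e$ is even.
\item For $L_3$, Lemma~\ref{lem:reduction} gives $r_m(t;L_3)=0$ for odd $t$, so $v\ge 1$ is necessary. The residual unit obstruction $a_0\equiv 7\pmod 8$ is the one already recorded in Proposition~\ref{prop:density-L3}.
\item For $\langle 2^a u\rangle$ with $a\ge 1$ (the evenness assumption), Proposition~\ref{prop:density-typeI}(a) gives vanishing when $v<a$ or $v-a$ is odd.
\end{altitemize}
In each case the statement is just a rephrasing of ``the density vanishes unless the listed condition holds''. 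No new counting is needed.

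\emph{Part (b).} Since every listed $L$ is even, Lemma~\ref{lem:q-dictionary}(i) shows that odd $t$ contributes nothing. For $t=2t'$ it gives $\alpha_2(2t';L)=2\,\alpha_2^{(q)}(t';L)$, so $\alpha_2^{(q)}(t';L)$ vanishes exactly when $\alpha_2(2t';L)$ does. Now substitute $v_2(t)=v_2(t')+1$ into the conditions from~(a):
\begin{altitemize}
\item $v_2(t)\ge e+1$ becomes $v_2(t')\ge e$.
\item The parity condition ``$v_2(t)-e$ odd'' becomes ``$v_2(t')-e$ even''.
\item For the Type~I block, the conditions become $v_2(t')\ge a-1$ and $v_2(t')-a$ odd.
\item For $L_3$, the condition becomes $t'\ne 0$, which is automatic.
\end{altitemize}
This yields the two displayed examples, which is all the statement asserts (``for example'').

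\emph{Main obstacle.} There is essentially none. The only care needed is bookkeeping: checking that the Type~I block with $a\ge1$ is indeed even, since $Q(L)\subseteq 2^a\Ztwo\subseteq 2\Ztwo$, so that Lemma~\ref{lem:q-dictionary} applies. One should also note that the density limits exist in all cases by Proposition~\ref{prop:stable-thresholds}, so the conversion identity is not vacuous.
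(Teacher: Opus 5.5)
Your proposal is correct and matches the paper's argument. The corollary has no separate proof in the paper: part~(a) is read off case by case from Corollaries~\ref{cor:density-hyp-scaled} and~\ref{cor:density-H1-scaled}, Lemma~\ref{lem:reduction} and Proposition~\ref{prop:density-typeI}, and part~(b) follows from Lemma~\ref{lem:q-dictionary} via $v_2(t)=v_2(t')+1$, exactly as you do, and your extra checks (evenness of $\langle 2^a u\rangle$ for $a\ge 1$, the parity flip for $2^eH_1$ and the Type~I block, and existence of the limits) are accurate.
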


\subsection*{Normalisation and comparison with Siegel-series formulas}

The dyadic densities computed here agree with the general formulas for the dyadic Siegel series (see for example
\cite{grossKeating1993,ikedaKatsurada2018,choYamauchi2020,yang1998}).
For instance, Yang~\cite{yang1998} gives an explicit formula for local densities in terms of local invariants.
Those formulas are usually stated for the quadratic form $q(v)=\langle v,v\rangle/2$ on even lattices; in this paper we use the bilinear-lattice convention $Q(v)=\langle v,v\rangle$, and Lemma~\ref{lem:q-dictionary} gives the exact conversion.

To make the comparison completely concrete, many references (including Yang~\cite{yang1998}) use the $q$-normalisation on even dyadic lattices and fix the Haar measure by requiring $\mathrm{vol}(L)=1$.
With this normalisation, the standard dyadic local density is computed by the same limiting procedure as in \eqref{eq:density-def}, but with $q$ in place of $Q$; in our notation this is $\alpha_2^{(q)}(t;L)$.
Thus Table~\ref{tab:block-densities} can be compared directly to the classical formulas by the identity
\[
  \alpha_2(2t;L)=2\,\alpha_2^{(q)}(t;L)\qquad (L\ \text{even}),
\]
from Lemma~\ref{lem:q-dictionary}.

As a fixed anchor, Proposition~\ref{prop:density-H1} gives $\alpha_2(2;H_1)=3$.
In the $q$-normalisation this becomes
\[
  \alpha_2^{(q)}(1;H_1)=\tfrac32,
\]
which is the quantity that should be compared to the dyadic Siegel-series formulas for the even unimodular anisotropic plane.

In the block cases treated here, the relevant local invariants reduce to valuation data and (for Type~I blocks and $L_3$) a residue class modulo~$8$.
Accordingly, the general Siegel-series formulas specialise to the piecewise constants recorded in
Propositions~\ref{prop:density-hyp}, \ref{prop:density-H1}, \ref{prop:density-typeI}, \ref{prop:density-L3} and Table~\ref{tab:block-densities}.
We mention this only as an explicit bridge to the classical theory; none of the arguments in this paper uses the Siegel-series machinery.


%% file: sections/A_typeI_rank1_blocks.tex
\section{Type I (rank one) dyadic blocks}
\label{app:typeI}

We record the square-root counts modulo $2^m$ that enter the rank-$1$ dyadic blocks.
These counts are classical, but we include them so that the evaluation of the base blocks can be read without appealing to external references.

\begin{lemma}[Square-root counts modulo $2^m$]\label{lem:square-roots}
Let $m\ge 1$ and $b\in\Z/2^m\Z$.
\begin{altenumerate}
\item If $b\equiv 0\pmod{2^m}$, then the number of solutions to $x^2\equiv b\pmod{2^m}$ is $2^{\lfloor m/2\rfloor}$.
\item Otherwise write $b=2^{2j}c$ with $j\ge 0$, $2j<m$, and $c$ odd, and set $k:=m-2j\ge 1$.
Then $x^2\equiv b\pmod{2^m}$ has solutions exactly in the following cases:
\begin{altenumeratealpha}
\item $k=1$: always, and then there are exactly $2^{j}$ solutions;
\item $k=2$: iff $c\equiv 1\pmod{4}$, and then there are exactly $2^{j+1}$ solutions;
\item $k\ge 3$: iff $c\equiv 1\pmod{8}$, and then there are exactly $2^{j+2}$ solutions.
\end{altenumeratealpha}
\end{altenumerate}
\end{lemma}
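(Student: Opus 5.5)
The plan is to count $x \bmod 2^m$ by stratifying on $v_2(x)$ and reducing to the classical unit-square structure modulo $2^k$. The one external input is the standard fact about odd squares: for $k\ge 3$ they are exactly the classes $\equiv 1\pmod 8$, and $x^2\equiv 1\pmod{2^k}$ has exactly four solutions $\pm1,\pm1+2^{k-1}$. I would prove this directly in the same elementary style as Lemma~\ref{lem:fibre-invariance}. Base case $k=3$: every odd square is $1$ modulo $8$, and all four odd classes square to $1$. Inductive step: if $y^2\equiv c\pmod{2^k}$ with $y$ odd and $k\ge 3$, then $(y+2^{k-1})^2\equiv y^2+2^k\pmod{2^{k+1}}$. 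This is the same toggle as in \eqref{eq:expansion-d}, using $2(k-1)\ge k+1$. Combined with Lemma~\ref{lem:fibre-invariance}, it shows that among the $4$ roots modulo $2^k$, which pair up under $y\mapsto y+2^{k-1}$, exactly $2$ lift, and each lifting root has both of its $2$ fibre elements as roots. So the count stays $4$ at every level.

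For the cases $k=1,2$: modulo $2$ every odd $c$ has the unique odd root $1$, and modulo $4$ the odd squares are $\equiv 1$, with both odd classes as roots. Hence the number of odd solutions of $y^2\equiv c\pmod{2^k}$ is $1$, $2$, or $4$ for $k=1$, $k=2$, $k\ge3$, under the stated conditions on $c$ and zero otherwise.

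For (ii), write $b=2^{2j}c$ with $2j<m$ and take a solution $x$ of $x^2\equiv b\pmod{2^m}$. Since $v_2(b)=2j<m$, any such $x$ satisfies $v_2(x^2)=2j$, so $x=2^jy$ with $y$ odd. The congruence then becomes $y^2\equiv c\pmod{2^{m-2j}}=\pmod{2^k}$. The condition depends only on $y\bmod 2^k$, while $y$ ranges over classes modulo $2^{m-j}$, so each odd root modulo $2^k$ accounts for $2^{m-j-k}=2^{j}$ values of $x$. This step needs care: $x\mapsto y$ is determined modulo $2^{m-j}$, and $m-j\ge k$ because $j\ge 0$. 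Multiplying the unit counts $1$, $2$, $4$ by $2^j$ gives $2^j$, $2^{j+1}$, $2^{j+2}$, as claimed.

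For (i), the condition $x^2\equiv 0\pmod{2^m}$ is equivalent to $v_2(x)\ge\lceil m/2\rceil$. The number of such classes is $2^{m-\lceil m/2\rceil}=2^{\lfloor m/2\rfloor}$.

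I expect the main obstacle to be the bookkeeping of the fibre sizes when passing between moduli $2^k$, $2^{m-j}$ and $2^m$. The inductive half-lift step for unit square roots is routine once Lemma~\ref{lem:fibre-invariance} and the $2^{k-1}$-translation are in hand.
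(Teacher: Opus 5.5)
Your argument is correct, and its skeleton matches the paper's: the valuation criterion $v_2(x)\ge\lceil m/2\rceil$ for (i), the substitution $x=2^jy$ with the $2^{m-j-k}=2^j$ fibre count for (ii), the direct checks for $k=1,2$, and the base case modulo $8$. The genuine difference is how you get exactly four unit roots for $k\ge 3$. The paper uses the $2^{k-1}$-toggle only for existence. It then counts separately: the number of roots equals $\#\ker(s)$ for the squaring homomorphism $s$ on $U_k$, and $\#\ker(s)=4$ follows from $\gcd(y-1,y+1)=2$. You instead let the toggle do both jobs. The root set modulo $2^k$ is stable under the free involution $y\mapsto y+2^{k-1}$, and exactly one element of each orbit satisfies the congruence modulo $2^{k+1}$. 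By Lemma~\ref{lem:fibre-invariance} that element lifts with its whole fibre of size $2$, so the count stays at its base value $4$ (or $0$) from $k=3$ onward. This is exactly the case $d=1$ of Theorem~\ref{thm:half-lift-principle}, with odd target $c$ (so $4\nmid c$) and growth factor $2^{d-1}=1$. You could simply cite it, since its proof does not use the appendix. Your route gives existence and the count in one induction, with no group structure, and ties the appendix to the paper's main mechanism. The paper's kernel argument gives slightly more: the solution set is the coset $y_0\{\pm1,\pm1+2^{k-1}\}$ of any one root $y_0$.
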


\begin{proof}
\noindent\emph{Strategy.} We first handle the singular target $b\equiv 0$ by a valuation condition, and then reduce the general congruence $x^2\equiv 2^{2j}c$ to a unit congruence $y^2\equiv c$ by factoring out the maximal square power of~$2$.

\smallskip
\noindent\emph{(i)} The congruence $x^2\equiv 0\pmod{2^m}$ is equivalent to $v_2(x)\ge \lceil m/2\rceil$.
Thus $x$ is determined by its residue class modulo $2^m$ subject to this valuation condition, and the number of such classes equals
\[
  2^{m-\lceil m/2\rceil}=2^{\lfloor m/2\rfloor}.
\]

\smallskip
\noindent\emph{(ii)} Assume $b\not\equiv 0\pmod{2^m}$ and write $b=2^{2j}c$ with $j\ge 0$, $2j<m$, and $c$ odd.
If $x^2\equiv 2^{2j}c\pmod{2^m}$, then necessarily $2^j\mid x$.
Write $x=2^j y$.
Dividing by $2^{2j}$ reduces the congruence to
\begin{equation}\label{eq:sqroot-reduced}
  y^2\equiv c\pmod{2^{k}},\qquad k:=m-2j\ge 1.
\end{equation}
The condition \eqref{eq:sqroot-reduced} depends only on the class of $y$ modulo $2^k$.
Conversely, each residue class $\bar y\in\Z/2^k\Z$ has exactly $2^j$ lifts to a class modulo $2^{m-j}$.
Hence the number of solutions $x\bmod 2^m$ equals
\[
  2^j\cdot\#\{y\bmod 2^k: y^2\equiv c\pmod{2^k}\}.
\]

\smallskip
\noindent\emph{Step 1: the cases $k=1$ and $k=2$.}
If $k=1$, then \eqref{eq:sqroot-reduced} is a congruence modulo~$2$.
As $c$ is odd one has $c\equiv 1\pmod 2$, hence there is exactly one solution $y\equiv 1\pmod 2$.
If $k=2$, then the squares modulo~$4$ are exactly $\{0,1\}$.
Thus solvability is equivalent to $c\equiv 1\pmod 4$, and in this case $y\equiv 1,3\pmod 4$ are the two solutions.

\smallskip
\noindent\emph{Step 2: the case $k\ge 3$ (odd squares).}
If $k\ge 3$, any odd square is congruent to $1\pmod 8$; hence solvability forces $c\equiv 1\pmod 8$.

Conversely, assume $c\equiv 1\pmod 8$.
Let $U_k=(\Z/2^k\Z)^\times$ and consider the square map $s:U_k\to U_k$, $s(y)=y^2$.
We show that $c\in s(U_k)$.
For $k=3$ this is immediate, since every element of $U_3$ squares to $1$ modulo~$8$.

Assume by induction that $y^2\equiv c\pmod{2^k}$ for some odd $y$ and write $y^2=c+2^k t$.
Choose $\varepsilon\in\{0,1\}$ with $\varepsilon\equiv t\pmod 2$ and set $y':=y+\varepsilon 2^{k-1}$.
Then
\begin{align*}
  (y')^2
  &=y^2+\varepsilon 2^k y+\varepsilon^2 2^{2k-2}\\
  &\equiv c+2^k(t+\varepsilon y)\pmod{2^{k+1}},
\end{align*}
and since $k\ge 3$ the term $2^{2k-2}$ vanishes modulo $2^{k+1}$.
As $y$ is odd, one has $t+\varepsilon y\equiv t+\varepsilon\equiv 0\pmod 2$ by construction, hence $(y')^2\equiv c\pmod{2^{k+1}}$.
This shows inductively that $c$ is a square in $U_k$ for all $k\ge 3$.

\smallskip
\noindent\emph{Step 3: the number of odd square roots.}
For $k\ge 3$ and $c\equiv 1\pmod 8$, the number of solutions to $y^2\equiv c\pmod{2^k}$ equals $\#\ker(s)$.
The kernel consists of those $y\in U_k$ with $y^2\equiv 1\pmod{2^k}$.

If $y^2\equiv 1\pmod{2^k}$ then $2^k\mid (y-1)(y+1)$.
Since $y$ is odd, the integers $y-1$ and $y+1$ are consecutive even numbers, hence $\gcd(y-1,y+1)=2$ and therefore
\[
  \min\{v_2(y-1),v_2(y+1)\}=1.
\]
Moreover,
\[
  v_2(y-1)+v_2(y+1)=v_2\bigl((y-1)(y+1)\bigr)\ge k,
\]
so one of the two factors has $2$-adic valuation at least $k-1$.
Equivalently,
\[
  y\equiv 1 \pmod{2^{k-1}}
  \qquad\text{or}\qquad
  y\equiv -1 \pmod{2^{k-1}}.
\]

Each of the two classes modulo $2^{k-1}$ has exactly two lifts modulo $2^k$, hence $\#\ker(s)=4$.
Therefore, for $k\ge 3$ and $c\equiv 1\pmod 8$, there are exactly four solutions $y\bmod 2^k$.

Combining the three cases ($k=1$, $k=2$, $k\ge 3$) and multiplying by the factor $2^j$ of lifts gives the stated counts for $x$.
\end{proof}

\begin{corollary}[Type I block representation counts]\label{cor:typeI-counts}
Let $L=\langle 2^a u\rangle$ with $a\ge 0$ and $u\in\Ztwo^\times$.
Let $t\in\Ztwo$.
If $v_2(t)<a$ then $r_m(t;L)=0$ for all $m>a$.
If $v_2(t)\ge a$, write $t=2^a t_a$.
For any $m>a$ one has
\[
  r_m(t;L)=2^a\cdot\#\{x\bmod 2^{m-a}: x^2\equiv u^{-1}t_a\!\!\pmod{2^{m-a}}\},
\]
and the right-hand side is given explicitly by Lemma~\ref{lem:square-roots}.
\end{corollary}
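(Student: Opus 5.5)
The plan is to reduce Corollary~\ref{cor:typeI-counts} to the square-root congruence and then read off the count from Lemma~\ref{lem:square-roots}. The form on $L=\langle 2^a u\rangle$ is $Q(x)=2^a u x^2$. We work with $x\in\Z/2^m\Z$ and $m>a$, and count the solutions of $2^a u x^2\equiv t\pmod{2^m}$.

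\emph{Vanishing case.} Suppose $v_2(t)<a$. Then $t\not\equiv 0\pmod{2^a}$, because $m>a$ means the reduction of $t$ modulo $2^a$ is meaningful. On the other hand, $Q(x)\equiv 0\pmod{2^a}$ for every $x$. So there are no solutions and $r_m(t;L)=0$.

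\emph{Main case.} Suppose $v_2(t)\ge a$ and write $t=2^a t_a$. Then
\[
  2^a u x^2\equiv 2^a t_a \pmod{2^m}
  \iff u x^2\equiv t_a \pmod{2^{m-a}}
  \iff x^2\equiv u^{-1}t_a\pmod{2^{m-a}}.
\]
The last step is valid because $u$ is a unit modulo $2^{m-a}$. This condition depends only on the class of $x$ modulo $2^{m-a}$. Each class modulo $2^{m-a}$ has exactly $2^a$ lifts modulo $2^m$, since we are free to choose the top $a$ binary digits. This gives
\[
  r_m(t;L)=2^a\cdot\#\{x\bmod 2^{m-a}:x^2\equiv u^{-1}t_a\pmod{2^{m-a}}\}.
\]

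The explicit value of the right-hand side is then exactly Lemma~\ref{lem:square-roots}, applied with modulus $2^{m-a}$ and $b=u^{-1}t_a$. Its case (i) covers $b\equiv 0$; its case (ii) covers the even-valuation cases. When $v_2(b)<m-a$ and $v_2(b)$ is odd, there are no solutions, by the valuation argument already used in Proposition~\ref{prop:density-typeI}.

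There is no real obstacle here. The only points needing care are the bookkeeping of the lift factor $2^a$ and the requirement $m>a$, which ensures that the reduced modulus $2^{m-a}$ is at least $2$.
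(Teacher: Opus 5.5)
Your proof is correct and follows the paper's own argument essentially verbatim: you divide $2^a u x^2\equiv 2^a t_a \pmod{2^m}$ by $2^a$, note that the reduced condition depends only on $x \bmod 2^{m-a}$, and multiply by the $2^a$ lifts. You also observe that the case where $v_2(u^{-1}t_a)<m-a$ is odd is not literally covered by Lemma~\ref{lem:square-roots}(ii) and must be settled by the valuation argument (the paper does this only later, in Proposition~\ref{prop:density-typeI}); that is a worthwhile precision.
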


\begin{proof}
By definition, $r_m(t;L)$ counts solutions $x\bmod 2^m$ to the congruence
\[
  2^a u\,x^2\equiv t\pmod{2^m}.
\]
If $m>a$ and $v_2(t)<a$, then the left-hand side is divisible by $2^a$ while the right-hand side is not, hence there are no solutions.

Assume $v_2(t)\ge a$ and write $t=2^a t_a$.
Dividing the congruence by $2^a$ yields
\[
  u\,x^2\equiv t_a\pmod{2^{m-a}}.
\]
This depends only on the class of $x$ modulo $2^{m-a}$.
Each residue class modulo $2^{m-a}$ has exactly $2^a$ lifts to a residue class modulo $2^m$.
Thus
\[
  r_m(t;L)=2^a\cdot\#\{x\bmod 2^{m-a}: x^2\equiv u^{-1}t_a\pmod{2^{m-a}}\},
\]
as claimed.
The remaining count is given by Lemma~\ref{lem:square-roots}.
\end{proof}


%% file: main.bbl
\begin{thebibliography}{1}

\bibitem{choYamauchi2020}
S.~Cho and T.~Yamauchi.
\newblock A reformulation of the {S}iegel series and intersection numbers.
\newblock {\em Mathematische Annalen}, 377(3--4):1757--1826, 2020.

\bibitem{ganYu2000}
W.~T. Gan and J.-K. Yu.
\newblock Group schemes and local densities.
\newblock {\em Duke Mathematical Journal}, 105(3):497--524, 2000.

\bibitem{grossKeating1993}
B.~H. Gross and K.~Keating.
\newblock On the intersection of modular correspondences.
\newblock {\em Inventiones Mathematicae}, 112(2):225--245, 1993.

\bibitem{ikedaKatsurada2018}
T.~Ikeda and H.~Katsurada.
\newblock On the {G}ross--{K}eating invariant of a quadratic form over a
  non-{A}rchimedean local field.
\newblock {\em American Journal of Mathematics}, 140(6):1521--1565, 2018.

\bibitem{kitaokaBook}
Y.~Kitaoka.
\newblock {\em Arithmetic of Quadratic Forms}, volume 106 of {\em Cambridge
  Tracts in Mathematics}.
\newblock Cambridge University Press, Cambridge, 1993.

\bibitem{omearaQuad}
O.~T. O'Meara.
\newblock {\em Introduction to Quadratic Forms}, volume 117 of {\em Die
  Grundlehren der mathematischen Wissenschaften}.
\newblock Springer-Verlag, Berlin--G{"o}ttingen--Heidelberg, 1963.

\bibitem{yang1998}
T.~Yang.
\newblock An explicit formula for local densities of quadratic forms.
\newblock {\em Journal of Number Theory}, 72(2):309--356, 1998.

\end{thebibliography}
